\newtheorem{thm}{Theorem}[section]
 \newtheorem{cor}[thm]{Corollary}
 \newtheorem{lem}[thm]{Lemma}
 \newtheorem{prop}[thm]{Proposition}
 \theoremstyle{definition}
 \theoremstyle{remark}
 \newtheorem{rem}[thm]{Remark}
 \numberwithin{equation}{section}
\def\be#1 {\begin{equation} \label{#1}}
\newcommand{\ee}{\end{equation}}
\def\sqw{\hbox{\rlap{\leavevmode\raise.3ex\hbox{$\sqcap$}}$%
\sqcup$}}
\def\findem{\ifmmode\sqw\else{\ifhmode\unskip\fi\nobreak\hfil
\penalty50\hskip1em\null\nobreak\hfil\sqw
\parfillskip=0pt\finalhyphendemerits=0\endgraf}\fi}
\newcommand{\R}{\mathbb R}
\newcommand{\C}{\mathbb C}
\newcommand\<{\langle}
\renewcommand\>{\rangle}
\newcommand{\mc}{\mathcal}
\newcommand{\mbf}{\mathbf}
\DeclareMathOperator{\supp}{supp}
\DeclareMathOperator{\dist}{dist}
\newcommand{\X}{\mc H}
\newcommand{\bd}{\mbf d}
\title[Compactons and their variational properties]{Compactons and their variational properties for degenerate KdV and NLS in dimension 1}
\author{Pierre Germain}
\email{pgermain@cims.nyu.edu}
\address{Courant Institute of Mathematical Sciences, New York University\\
251 Mercer Street, New York, NY 10012, USA}
\thanks{P.G. was supported by the NSF grant DMS-15010.}
\author{Benjamin Harrop-Griffiths}
\email{benjamin.harrop-griffiths@cims.nyu.edu}
\address{Courant Institute of Mathematical Sciences, New York University\\
251 Mercer Street, New York, NY 10012, USA}
\thanks{B.H.-G. was supported by a Junior Fellow award from the Simons Foundation.}
\author{Jeremy L.~Marzuola}
\email{marzuola@math.unc.edu}
\address{Mathematics Department, University of North Carolina \\
Phillips Hall, Chapel Hill, NC 27599, USA}
\thanks{J.L.M. was supported in
  part by U.S. NSF Grants DMS--1312874 and DMS-1352353. }
\thanks{The authors thank John Hunter for very helpful conversations about degenerate NLS models at the beginning of the project.  Also, the authors are grateful to Rupert Frank for discussions related to energy minimizers, in particular bringing to our attention the reference~\cite{nagy1941integralungleichungen}. These discussions were initiated at the Mathematisches Forschungsinstitut Oberwolfach.}
\begin{document}

\maketitle

\begin{abstract}
 We analyze the stationary and traveling wave solutions to a family of degenerate dispersive equations of KdV and NLS-type.  In stark contrast to the standard soliton solutions for non-degenerate KdV and NLS equations, the degeneracy of the elliptic operators studied here allows for compactly supported steady or traveling states.  As we work in $1$ dimension, ODE methods apply, however the models considered have formally conserved Hamiltonian, Mass and Momentum functionals, which allow for variational analysis as well.
\end{abstract}

\section{Introduction}

\subsection{The classical theory} Before discussing the degenerate models that will be the focus of this article, it is good to have in mind the basic properties of the canonical one-dimensional models; classical references are~\cite{MR1696311, MR2233925}, see~\cite{MR695535} for ground states of more general semilinear one-dimensional models.

Consider the Hamiltonian (defined on functions on the real line)
$$
E(u) = \frac{1}{2} \int |\partial_x u|^2\,dx - \frac{1}{p} \int |u|^p\,dx.
$$
The associated Hamiltonian flows through the symplectic forms $(f,g) \mapsto \int \partial_x^{-1} f g \,dx$ and $(f,g) \mapsto \mathfrak{Im} \int \bar f g \,dx$ are, respectively, the generalized Korteweg-de Vries and nonlinear Schr\"odinger equations
\begin{align}
& \tag{KdV} \label{KdV} \partial_t u + \partial_x^3 u + \partial_x (u^{p-1}) = 0 \\
& \tag{NLS} \label{NLS} - i\partial_t u + \partial_x^2 u + |u|^{p-2} u = 0.
\end{align}
The profile $\phi$ of traveling waves of~\eqref{KdV} of the form $\phi(x-ct)$, or of stationary waves of~\eqref{NLS} of the form $\phi(x) e^{-ict}$   solves
$$
\partial_x^2 \phi - c \phi + \phi^{p-1} = 0.
$$
The only localized (decaying at infinity) solution of this equation is, up to translations,
$$
\phi(x) = c^{\frac{1}{2-p}} \psi \left( c^{\frac{p-3}{2-p}} x \right) \quad \mbox{with} \quad \psi(x) = \left( \frac{p}{2 \cosh\left( \frac{p-2}{2} x \right) ^2} \right)^{p-2}.
$$
Both ~\eqref{KdV} and~\eqref{NLS} conserve the $L^2$-mass $M(u) = \int |u|^2\,dx$ of the solution, and the above solutions $\phi$ can be viewed as critical points of the Hamiltonian $E$ under the constraint that the \(L^2\)-mass $M$ is fixed to some value. These critical points are actually global minimizers in the $L^2$-subcritical case $p < 5$, which leads to the orbital stability of these stationary waves.

Making use of the pseudo-Galilean invariance gives translating solutions of the nonlinear Schr\"odinger equation of the form \(\phi(x - vt)e^{-ict}e^{i(\frac12 xv + \frac14 tv^2)}\); they can also be characterized as minimizers of the Hamiltonian for fixed mass and momentum.

\subsection{A Hamiltonian leading to degenerate dispersion}

The aim of the present paper is to examine the situation if the Hamiltonian becomes 
$$
H(u) = \frac{1}{2} \int |u\partial_x u|^2\,dx - \frac{1}{p} \int |u|^p\,dx,
$$
making the equation quasilinear and degenerate close to $u=0$. We will assume throughout that
$$
p \geq 2.
$$
We will see that a number of interesting phenomena occur:
\begin{itemize}
\item Decaying stationary waves become compactly supported instead of exponentially decaying.
\item The family of stationary waves becomes two-dimensional (up to translations) instead of one-dimensional.
\item Out of these stationary waves, some, but not all, are energy minimizing for fixed mass.
\item The pseudo-Galilean symmetry has to be modified in a nonlinear way.
\end{itemize}

The degenerate KdV and NLS equations, obtained through the symplectic forms $(f,g) \mapsto \int \partial_x^{-1} f g \,dx$ and $(f,g) \mapsto \mathfrak{Im} \int \bar f g \,dx$ respectively, read
\begin{align}
\label{degKdV} \tag{dKdV}
& \partial_t u + \partial_x (u \partial_x (u \partial_x u) + u^{p-1}) = 0 \\
\label{degNLS} \tag{dNLS}
& - i \partial_t u + \bar u \partial_x ( u \partial_x u) + |u|^{p-2} u = 0. 
\end{align}
(where $u$ is real-valued for~\eqref{degKdV} and complex-valued for~\eqref{degNLS}). Both equations conserve the mass
$$
M(u) = \int |u|^2\,dx
$$
and additional conservation laws are given by
\begin{align*}
& \mbox{for~\eqref{degKdV},} \quad P(u) = \int u\,dx \\
& \mbox{for~\eqref{degNLS},} \quad  K(u) = \mathfrak{Im}\int \bar u \partial_x u \,dx.
\end{align*}

Degenerate KdV-type equations were first introduced and studied by Rosenau and Hyman~\cite{HR,MR1294558}; their primary interest was the existence of compactons. A Hamiltonian version of the Rosenau-Hyman equations was then proposed by Cooper, Shepard and Sodano~\cite{MR1376975, MR2787498}; a particular case of this family of equations is given by~\eqref{degKdV}. A Schr\"odinger version of the Rosenau-Hyman equations was proposed in~\cite{MR2468255}, of which~\eqref{degNLS} is a particular case.

The equations~\eqref{degKdV} and~\eqref{degNLS} are perhaps the simplest instances of degenerate dispersion; more elaborate models involving degenerate dispersion occur in the description of a variety of physical phenomena: to cite a few~\cite{MR2772627, MR2285103, Nesterenko, MR1234453, MR1135995, MR1811323, MR2599457}. It is our hope that the analysis of the model equations~\eqref{degKdV} and~\eqref{degNLS} will be an interesting step in the development of the mathematical theory of degenerate nonlinear dispersive equations, which remains very primitive.

\subsection{Obtained results}

\subsubsection{Compactons for~\eqref{degKdV}} Traveling waves of~\eqref{degKdV} are given by the ansatz $u(t,x) = \phi(x-ct)$ and satisfy the ODE
$$
- c \phi' + (\phi(\phi \phi')' + \phi^{p-1})' = 0.
$$
The analysis of this ODE leads in particular to the following theorem:

\begin{thm}
For $B>0$ and \(c\in \R\), or $B=0$ and $c>0$, there exist compactons $\Phi_{B,c}$ solving the above ODE (in the sense of distributions) which are even, compactly supported on $(-x_{B,c},x_{B,c})$,  decreasing on $(0,x_{B,c})$, and satisfy
$$
-\frac{c}{2} \phi^2 + \frac{1}{2} (\phi \phi')^2 + \frac{1}{p} \phi^p = B. 
$$
These compactons can be combined to yield multi-compacton solutions
$$
\sum \epsilon_i \Phi_{B_i,c}(x-x_i),
$$
where $\epsilon_i = \pm 1$, and the $\Phi_{B_i,c}(x-x_i)$ have non overlapping supports.
\end{thm}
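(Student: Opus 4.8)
The plan is to integrate the traveling-wave ODE twice, build the profile by an explicit quadrature, and then check by hand that the resulting compactly supported function is a genuine distributional solution.

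First I would integrate the ODE once to get $-c\phi + \phi(\phi\phi')' + \phi^{p-1} = A$ for a constant $A$; evaluating near the (presumptive) edge of the support, where $\phi$ and $\phi(\phi\phi')'$ vanish, forces $A=0$. Multiplying by $\phi'$ and integrating again — using $\phi\phi'(\phi\phi')' = \tfrac12\partial_x((\phi\phi')^2)$, $-c\phi\phi' = -\tfrac c2\partial_x(\phi^2)$, $\phi^{p-1}\phi' = \tfrac1p\partial_x(\phi^p)$ — produces exactly the first integral in the statement,
\[
-\tfrac c2\phi^2 + \tfrac12(\phi\phi')^2 + \tfrac1p\phi^p = B.
\]
Setting $v=\phi^2\ge 0$ turns this into $(v')^2 = 4g(v)$ with $g(s) = 2B + cs - \tfrac 2p s^{p/2}$, the energy relation for a particle in the potential $-g$. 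The elementary structural facts I would establish are that $g(0)=2B\ge 0$, that $g$ is strictly concave on $(0,\infty)$ for $p>2$ with $g(s)\to-\infty$ (the case $p=2$ being linear), and hence that in each admissible regime ($B>0$ with $c\in\R$, or $B=0$ with $c>0$) there is a \emph{unique} $\mu>0$ with $g(\mu)=0$, with $g>0$ on $(0,\mu)$ and $g'(\mu)<0$. I would then define the profile on $[0,x_{B,c}]$ by solving $v''=2g'(v)=2(c-v^{p/2-1})$, $v(0)=\mu$, $v'(0)=0$ (equivalently by the quadrature $x=\int_{v}^{\mu}\frac{ds}{2\sqrt{g(s)}}$), reflect evenly and extend by zero. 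Integrability of $g^{-1/2}$ near $s=\mu$ (because $g'(\mu)<0$) and near $s=0$ (trivially when $B>0$, and because $g(s)\sim cs$ when $B=0$) shows $x_{B,c}<\infty$, so the profile is compactly supported; $v'<0$ on $(0,x_{B,c})$ gives monotonicity and $v(0)=\mu>0$ gives smoothness at the center.

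It remains to check that $\Phi_{B,c}$, so defined, solves the \emph{un-integrated} ODE in $\mathcal D'(\R)$. On $(-x_{B,c},x_{B,c})$, dividing the differentiated first integral by $\phi\phi'$ gives the classical identity $(\phi\phi')' = c-\phi^{p-2}$, hence $\phi(\phi\phi')'+\phi^{p-1}=c\phi$ there (and at the center by continuity), and one more derivative recovers the equation classically. The delicate point — and the step I expect to be the main obstacle — is the behavior at $\pm x_{B,c}$: when $B>0$ the profile vanishes only like $(x_{B,c}-x)^{1/2}$, so $\phi'$ blows up, $\phi\phi'$ has a jump, and $(\phi\phi')'$ carries a Dirac mass at $\pm x_{B,c}$. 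This is harmless since $\phi$ is continuous and vanishes there, but to avoid literally multiplying a function by a measure I would instead use the identity $\phi\,\partial_x(\phi\partial_x\phi) = \tfrac13\partial_x^2(\phi^3) - \phi(\partial_x\phi)^2$ and verify directly that $\phi^3\in C^1(\R)$ and $\phi(\partial_x\phi)^2\in L^1_{\mathrm{loc}}(\R)$; then every term is an honest distribution, the integrated equation $-c\phi+\phi\,\partial_x(\phi\partial_x\phi)+\phi^{p-1}=0$ is an identity of $L^1_{\mathrm{loc}}$ functions holding pointwise off a finite set, and differentiating gives the claim.

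Finally, for the multi-compacton: since the term is the one produced by the potential energy $\tfrac1p\int|u|^p$, it should be read as $|u|^{p-2}u$ (which agrees with $u^{p-1}$ on the support of a single nonnegative compacton), and the equation is then odd under $u\mapsto -u$, so $-\Phi_{B,c}$ is also a solution. Given centers $x_i$ whose translated supports $(x_i-x_{B_i,c},x_i+x_{B_i,c})$ are pairwise disjoint, on each such interval $\sum_j\epsilon_j\Phi_{B_j,c}(\cdot-x_j)$ coincides with the single signed compacton $\epsilon_i\Phi_{B_i,c}(\cdot-x_i)$ and hence solves the equation there, off the union of supports it vanishes identically, and at the finitely many endpoints the same vanishing-of-$\phi$ argument as above rules out spurious singular contributions; thus the superposition solves the ODE in $\mathcal D'(\R)$.
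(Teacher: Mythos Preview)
Your argument is correct and follows the same overall strategy as the paper --- integrate twice to the first integral, read the profile off a one-dimensional energy relation, then verify the distributional identity at the endpoints --- but two of your implementation choices are cleaner than the paper's. First, your substitution $v=\phi^2$ turns the phase relation into $(v')^2=4g(v)$ with $g$ smooth on $[0,\infty)$, whereas the paper works directly with $(\phi')^2=F_{A,B,c}(\phi)$, which is singular at $\phi=0$ and makes the endpoint asymptotics slightly more laborious to extract. Second, your treatment of the distributional step is more honest: the paper simply asserts that since $(\phi\phi')'$ is a bounded function plus a Dirac mass and $\phi$ vanishes at $\pm x_{B,c}$, the product $\phi(\phi\phi')'$ ``is a bounded function,'' which sweeps under the rug exactly the function-times-measure issue you flag. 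Your rewriting $\phi\,\partial_x(\phi\partial_x\phi)=\tfrac13\partial_x^2(\phi^3)-\phi(\partial_x\phi)^2$, together with the observation that $\phi^3\in C^1(\R)$ and $\phi(\partial_x\phi)^2\in L^1_{\mathrm{loc}}$, gives the same conclusion without that ambiguity. The paper, on the other hand, carries out a fuller phase-portrait classification (its Proposition~\ref{propODE}) covering periodic and semi-infinite solutions as well, which your direct quadrature bypasses since you are aiming only at the compacton case.
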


This theorem is proved in Section~\ref{sect:TravelingWaves}.

\subsubsection{Variational properties} A classical idea to generate traveling waves is to consider the minimizing problem
$$
\min H(u) \quad \mbox{subject to} \quad M(u) = M_0,
$$
where $M_0$ is a fixed, positive constant. Traveling waves obtained through this minimization procedure are orbitally stable (as long as the flow around them can be defined).

\begin{thm}
For $2<p<8$, the above minimization problem admits a minimizer, which is (up to translation) one of the $\Phi_{B,c}$. For $p=4$, the minimizer is $\Phi_{0,c}$ with $c = \frac{M_0}{\sqrt{2} \pi}$.
\end{thm}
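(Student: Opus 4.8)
The plan is to run the standard concentration-compactness/variational machinery, but with the crucial twist that the natural variable is not $u$ but $v = u^2$ (or $v=|u|^2$), since the Hamiltonian $H(u) = \frac12\int |u\partial_x u|^2 - \frac1p\int|u|^p$ is, after the substitution $v = u^2$ with $\partial_x v = 2u\partial_x u$, essentially $\frac18\int|\partial_x v|^2 - \frac1p\int |v|^{p/2}$, while the mass constraint $M(u)=\int u^2 = \int v = M_0$ becomes an $L^1$ constraint on $v$. So first I would reformulate: set $I(M_0) = \inf\{H(u) : M(u) = M_0\}$ and show it equals $\inf\{\frac18\|\partial_x v\|_{L^2}^2 - \frac1p\|v\|_{L^{p/2}}^{p/2} : v\geq 0,\ \int v = M_0,\ v^{1/2}\in \dot H^1\}$, the point being that $u$ and $|u|$ give the same values, and $|u|$ can be taken to have the regularity making $v$ an $H^1$ function. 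This change of variables is what linearizes the energy and is the conceptual heart of the argument.

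Second, I would establish that $I(M_0) > -\infty$ and $I(M_0) < 0$ for $2<p<8$. The lower bound: by Gagliardo–Nirenberg in one dimension applied to $v$, $\|v\|_{L^{p/2}}^{p/2} \lesssim \|\partial_x v\|_{L^2}^{\theta}\|v\|_{L^1}^{p/2-\theta}$ with the exponent $\theta = \frac{p/2-1}{2}\cdot 2 = \frac{p-2}{2}$ on $\|\partial_x v\|_{L^2}$; since $p<8$ gives $\theta < 3 < 2$... more precisely one checks $\theta/2 < 1$ iff $p < 6$, and for $6\le p<8$ one still wins because $\|v\|_{L^1}$ is fixed — actually the clean statement is that the $L^1$-subcriticality threshold for $v$ in $\dot H^1(\R)$ translates to $p<8$ for $u$; I would verify the scaling $v\mapsto \lambda v(\mu\cdot)$ preserving $\int v$ forces $\lambda\mu=1$, so $H\sim \mu^2 - \lambda^{p/2}\mu^{-1} = \mu^2-\mu^{1-p/2}$, and this is bounded below and negative somewhere precisely when $2 < 1-p/2$ fails but $2 > -(p/2-1)$... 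I'll pin down that the homogeneity exponent gap gives exactly $p<8$. Negativity of $I(M_0)$ then follows by choosing a test function and sending $\mu\to\infty$ along the scaling that makes the nonlinear term dominate.

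Third, concentration-compactness: take a minimizing sequence $v_n\ge 0$ with $\int v_n = M_0$, bounded in $\dot H^1$ hence (after the $L^1$ bound) in $H^1$. Rule out vanishing using $I(M_0)<0$; rule out dichotomy using strict subadditivity $I(M_0) < I(\alpha) + I(M_0-\alpha)$ for $0<\alpha<M_0$, which follows from the strict concavity in $M_0$ — indeed the scaling computation gives $I(M_0) = M_0^{\gamma} I(1)$ for an appropriate $\gamma > 1$ with $I(1)<0$, so strict superadditivity of $M_0\mapsto |I(M_0)|$ holds. Hence $v_n$ (up to translation) converges strongly in $L^1\cap L^{p/2}$ and weakly in $H^1$ to a minimizer $v_\star\ge 0$; lower semicontinuity of $\|\partial_x\cdot\|_{L^2}$ upgrades this to a genuine minimizer, and then the constrained Euler–Lagrange equation for $v_\star$ is exactly (a constant multiple of) the profile ODE from the first theorem, forcing $v_\star = \Phi_{B,c}^2$ for some admissible $(B,c)$ by the uniqueness/classification in that theorem; translating back, the minimizer of $H$ is $\pm\Phi_{B,c}(\cdot - x_0)$.

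Finally, the case $p=4$: here the nonlinear term $\frac14\|v\|_{L^2}^2$ and the mass are both quadratic homogeneities of a sort, and the relevant sharp inequality is precisely the one-dimensional interpolation inequality $\|v\|_{L^2}^2 \le C \|\partial_x v\|_{L^2}\|v\|_{L^1}$ with optimal constant and optimizers — this is the Nagy inequality referenced in the acknowledgements (\cite{nagy1941integralungleichungen}). I would invoke it to identify the extremizer explicitly, compute that it must be of the form $\Phi_{0,c}$ (the $B=0$ family, which from the first theorem's ODE $\frac12(\phi\phi')^2 = \frac{c}2\phi^2 - \frac1p\phi^p$ with $B=0$ gives a compactly supported profile with an explicit closed form), and then match the mass: $\int\Phi_{0,c}^2 = M_0$ together with the scaling of $\Phi_{0,c}$ in $c$ pins down $c = M_0/(\sqrt2\,\pi)$ after evaluating the relevant trigonometric integral.

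**Main obstacle.** The delicate point is the compactness step — specifically, ensuring the change of variables $u\leftrightarrow v=|u|^2$ is valid at the level of the function spaces (a minimizing sequence for $H$ need not have $|u_n|^2\in H^1$ a priori without first symmetrizing/rearranging, and one must be careful that $u\mapsto|u|$ does not increase $H$, which uses that $|\,|u|\,\partial_x|u|\,| = |u\,\partial_x u|$ a.e.), and then showing the weak $H^1$ limit $v_\star$ is strictly positive on an interval and vanishes with the right (matching) boundary behavior so that it is genuinely one of the $\Phi_{B,c}^2$ rather than some spurious weak solution — this is where the classification from the traveling-wave theorem must be invoked carefully, since the constrained critical point equation only determines $v_\star$ up to the ODE, and one needs the qualitative properties (even, single-bump, compactly supported) to conclude it is exactly a $\Phi_{B,c}$.
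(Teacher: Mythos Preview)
Your proposal is essentially the paper's own argument: the change of variables $v=u^2$, the Gagliardo--Nirenberg bound and scaling law $I_m = m^{(p+4)/(8-p)}I_1$ giving $I_m\in(-\infty,0)$, concentration-compactness (the paper uses a profile decomposition rather than Lions dichotomy/vanishing, but this is cosmetic), and the Euler--Lagrange identification with the compacton ODE all appear there verbatim. One point the paper makes explicit that you flag only as an obstacle: after obtaining the Euler--Lagrange equation, the minimizer is a priori a \emph{sum} $\sum_i \Phi_{B_i,c_i}(\cdot - a_i)$ with possibly different $c_i$ on each connected component of $\{v_\star>0\}$; the paper reruns the scaling-convexity argument $I_1 \geq I_1\sum m_i^{(p+4)/(8-p)} \geq I_1$ to force a single bump.

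The one genuine divergence is the $p=4$ identification. You propose invoking Nagy's sharp inequality directly; the paper instead, having already reduced to the one-parameter family $\{\Phi_{B,c}: M(\Phi_{B,c})=m\}$, uses the Pohozaev/energy identity $H(\Phi_{B,c}) = -\frac{c}{4}M(\Phi_{B,c})$ (Proposition~\ref{propenergypohozaev} specialized to $p=4$) together with the explicit formula $m = c(2x_{B,c} - \sqrt{2}\tan(\sqrt{2}x_{B,c}))$ to see that maximizing $c$ at fixed $m$ forces $x_{B,c}=\pi/\sqrt{2}$, i.e.\ $B=0$ (Proposition~\ref{propminH}). Your route via Nagy is cleaner if one is willing to quote it; the paper's route is self-contained and exploits the explicit $p=4$ compacton formula.
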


\begin{rem} While the proof of the above theorem relies on a concentration compactness result, we learned from Rupert Frank of the reference~\cite{nagy1941integralungleichungen}, where Sz. Nagy derives the same result from a very clever use of elementary inequalities. Sz. Nagy is also able to identify the minimizers as $\Phi_{B,c}$ for any $p \in (2,8)$.
\end{rem}

\subsubsection{Compactons for~\eqref{degNLS}} Traveling waves of~\eqref{degNLS} are given by the ansatz $u(t,x) = Q(x-vt) e^{-ict}$; they satisfy the equation
$$
-c Q + iv Q' + \bar Q(QQ')' + |Q|^{p-2} Q = 0.
$$
\begin{thm}
For $B>0$ and \(c\in \R\), or $B=0$ and $c>0$, there exist compactons $Q_{B,c}^v$ satisfying the above ODE. They are given by
$$
Q_{B,c}^v (x) = 
\left\{
\begin{array}{ll}
\displaystyle \Phi_{B,c} (x) e^{i v \theta_{B,c}(x)} & \mbox{if $x \in (-x_{B,c},x_{B,c})$} \\
0 & \mbox{if $x \notin (-x_{B,c},x_{B,c})$}
\end{array}
\right.
$$
where
$$
\left\{
\begin{array}{l}
\theta_{B,c}(0) = 0 \\
\displaystyle \theta_{B,c}' = - \frac{1}{2 \Phi_{B,c}^2} \quad \mbox{if $x \in (-x_{B,c},x_{B,c})$}
\end{array}
\right.
$$
\end{thm}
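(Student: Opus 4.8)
The plan is to reduce the traveling-wave equation for \eqref{degNLS} to the already-understood ODE for $\Phi_{B,c}$ by separating modulus and phase. Writing $Q = \rho e^{i\beta}$ with $\rho = |Q| \geq 0$ and $\beta$ real on the interior of the support, I would substitute into
$$
-cQ + ivQ' + \bar Q (QQ')' + |Q|^{p-2}Q = 0,
$$
expand $QQ' = (\rho^2)'/2 + i\rho^2 \beta'\, e^{2i\beta}$ wait—more cleanly, compute $QQ' = \rho\rho' + i\rho^2\beta'$ after factoring $e^{2i\beta}$, then $(QQ')'$, multiply by $\bar Q = \rho e^{-i\beta}$, and collect real and imaginary parts. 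The key structural point I expect to fall out is that the imaginary part of the equation becomes a total derivative: one should find that $\rho^4 \beta' $ is forced to be constant, and matching with the requirement that $Q$ be continuous and vanish at the endpoints $\pm x_{B,c}$ (where $\rho \to 0$) forces that constant to be $-\tfrac12$ — wait, the claimed relation is $\theta' = -1/(2\Phi^2)$, i.e. $\rho^2 \beta' = -1/2$, so the conserved quantity coming out of the imaginary part should be $\rho^2 \beta'$, not $\rho^4\beta'$; I would track the exact power carefully, but the mechanism is the same: the imaginary part of the ODE, after multiplying through appropriately, integrates to $\rho^2\beta' = \text{const}$, and the normalization $v$ in front plus the $iv Q'$ term fixes the constant to $-\tfrac12$. (The condition $\beta(0)=0$ is just a choice of phase gauge using the freedom in the ansatz.)

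Next, with $\beta' = -1/(2\rho^2)$ in hand, I would substitute back into the real part of the equation. The terms involving $\beta'$ should reorganize: the contribution of $\bar Q(QQ')'$ has a piece $-\rho \cdot \rho (\beta')^2 = -\rho^2(\beta')^2 \cdot \rho / \rho$... more precisely the real part will contain $\rho(\rho\rho')' - \rho \cdot \rho(\beta')^2 \cdot(\text{something})$ plus a term $iv Q'$ contributing $-v\rho^2\beta' \cdot(\text{real part})$. Using $\rho^2\beta' = -1/2$, the term $v\,\mathrm{Re}(iQ') = -v\rho\beta' \cdot \rho/\rho$ evaluates to $v \cdot \tfrac1{2\rho}\cdot\rho = v/2$, a constant, which can be absorbed; and the $(\beta')^2$ term contributes $\rho \cdot \rho \cdot 1/(4\rho^4) = 1/(4\rho^2)$-type corrections. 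The goal is to show that the real part of the degenerate-NLS traveling equation for $\rho = \Phi_{B,c}$ reduces exactly to the degenerate-KdV traveling-wave ODE $-c\phi' + (\phi(\phi\phi')' + \phi^{p-1})' = 0$ (or its first integral with constant $B$) for $\rho$, with the $v$-dependent terms either canceling or being accounted for by the definition of $\Phi_{B,c}$ in Theorem 1.1. This is the step I expect to be the main obstacle: verifying that all the phase-induced terms collapse precisely onto the KdV profile equation rather than leaving a residual $v$- or $x$-dependent forcing. It requires care because the equation is quasilinear and the substitution $\beta' = -1/(2\rho^2)$ introduces apparent singularities as $\rho\to 0$ that must cancel.

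Finally I would address the distributional sense of the solution across the endpoints $\pm x_{B,c}$. Since $\Phi_{B,c}$ vanishes there and $Q^v_{B,c}$ is set to $0$ outside, the phase factor $e^{iv\theta_{B,c}}$ — although $\theta_{B,c}$ itself blows up like $\int 1/\Phi^2$ near the endpoint — multiplies a function going to zero, so I would check that $Q^v_{B,c}$ and the relevant products ($QQ'$, $Q\partial_x(QQ')$) extend continuously by zero and that no boundary distributions (Dirac masses or derivatives thereof) appear when the equation is tested against a smooth compactly supported function; this is analogous to the endpoint analysis already carried out for $\Phi_{B,c}$ in Section~\ref{sect:TravelingWaves}, and I would invoke that together with the explicit rate at which $\Phi_{B,c}$ and its derivatives vanish. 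Modulo that matching, the construction is complete: existence and the stated support, the explicit modulus $\Phi_{B,c}$, and the explicit phase ODE all come directly from Theorem 1.1 applied to $\rho$.
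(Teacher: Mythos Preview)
Your approach is essentially the paper's: write $Q=\psi e^{i\theta}$, use the imaginary part to pin down the phase, and reduce the real part to the (dKdV) profile equation for $\psi=\Phi_{B,c}$; the paper in fact just says ``a computation shows\dots'' for the existence direction and carries out exactly this polar-decomposition argument for the converse (uniqueness) part.

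Two small corrections. First, be careful with the bookkeeping between $\theta_{B,c}$ and the full phase: the statement defines $Q=\Phi e^{iv\theta_{B,c}}$, so the phase in your polar form is $\beta=v\,\theta_{B,c}$ and the conserved quantity coming from the imaginary part is $\rho^2\beta'=-\tfrac{v}{2}$, not $-\tfrac12$. Concretely, multiplying the ODE by $\bar Q$ and taking the imaginary part gives $\big(\tfrac{v}{2}|Q|^2+\mathfrak{Im}(|Q|^2\bar Q Q')\big)'=0$, i.e.\ $\tfrac{v}{2}\psi^2+\psi^4\theta'=\eta$; the vanishing of $\psi$ at the endpoints forces $\eta=0$, hence $\theta'=-v/(2\psi^2)$. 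Second, your worry about residual $v$-terms in the real part is unfounded: once $\theta'=-v/(2\psi^2)$ is imposed, one has $QQ'=(\psi\psi'-\tfrac{iv}{2})e^{2i\theta}$, and a direct expansion shows that the $iv\psi'$ contributions from $ivQ'$ and from $\bar Q(QQ')'$ cancel, as do the $v^2/(2\psi)$ contributions, leaving exactly $-c\psi+\psi(\psi\psi')'+\psi^{p-1}=0$. So there is no leftover $v$- or $x$-dependent forcing; the reduction to the $\Phi_{B,c}$ equation is exact, and the endpoint analysis is indeed identical to that already done in Section~\ref{sect:TravelingWaves}.
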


This theorem is proved in Section~\ref{sectiondegenerateNLS}.

\subsubsection{The linearized problem for~\eqref{degKdV}} Linearizing~\eqref{degKdV} around one of the compacton traveling waves $\phi$ results in the equation 
$$
\partial_t v = \partial_x \mathcal{L}_\phi v \qquad \mbox{with} \qquad \mathcal{L}_\phi = - \phi (\partial_x^2 +2) \phi
$$
(in the moving frame). This equation has to be supplemented with appropriate boundary conditions.

We discuss in Section~\ref{sectionlinearstability}, in the case $p=4$,
\begin{itemize}
\item The spectrum of the operators $\mathcal{L}_\phi$, which depends on the compacton considered
\item A local well-posedness theory for the linearized evolution problem above.
\end{itemize}

\subsubsection{Numerical results} Numerical results on the traveling waves and their stability are given in Section~\ref{sectionnumericalanalysis}.

\section{Solitary waves for~\eqref{degKdV}}\label{sect:TravelingWaves}

\subsection{The ODE} By definition, traveling waves at velocity $c \in \mathbb{R}$ are solutions of the form
$$
u = \phi(x-ct).
$$
Inserting this ansatz in the equation leads to
\begin{equation}
\label{theODE}
-c \phi' + (\phi(\phi \phi')' + \phi^{p-1})' = 0.
\end{equation}
This is equivalent to
\begin{equation} \label{theODEA}
-c \phi + \phi(\phi \phi')' + \phi^{p-1} = A
\end{equation}
for an integration constant $A \in \mathbb{R}$. 
Multiplying by $\phi'$, we get for a new integration constant $B$
$$
- \frac{c}{2}\phi^2 + \frac{1}{2}(\phi \phi')^2 + \frac{1}{p} \phi^p = A\phi + B,
$$
or equivalently
\begin{equation}
\label{ODEAB}
(\phi')^2 = \frac{2B}{\phi^2} + \frac{2A}{\phi} + c - \frac{2}{p} \phi^{p-2} = F_{A,B,c} (\phi)
\end{equation}
The following proposition classifies solutions of this ODE:

\medskip
\begin{prop}
\label{propODE}
Assume $p > 2$. We will denote $z_{A,c}$ be the unique positive solution of $A + cz = z^{p-2}$, when it exists.

Consider a solution $\phi$ of~\eqref{theODE}, with speed $c$ and integration constants $A$ and $B$, which is positive over the (maximal) interval $I$. Up to translation, there are three possibilities
\begin{itemize} 
\item[(i)] $I = (-\infty,\infty)$, and the solution $\phi$ is periodic. Such a solution exists in particular if $B<0$, or $B=0$ and $A<0$, and, if $z_{A,c}$ exists, $F_{A,B,c}(z_{A,c}) \neq 0$. 
\item[(ii)] $I = (0,\infty)$. This is the case if and only $z_{A,c}$ exists, 
satisfies $F_{A,B,c}(z_{A,c}) =0$; if furthermore $F_{A,B,c}$ is positive on $[0,z]$; and if finally $B>0$, or $B=0$ and $A>0$, or $A=B=0$ and $c>0$. Then
$$
\phi(x) \to z \quad \mbox{as $x \to \infty$},
$$
and the behavior of $\phi$ for $x \to 0$ is as in the following point.
\item[(iii)] $I = (-X,X)$ for some $X>0$, and the solution $\phi$ is even. This is the case if and only if one of the two following conditions is satisfied:
\begin{itemize}
\item[(1)] $B>0$ and $A \neq 0$, or $B=0$ and $A>0$, and furthermore $F_{A,B,c}(z_{A,c}) \neq 0$ if $z_{A,c}$ exists. Then, as $x \to 0$,
\begin{align*}
\phi(-X+x) \sim 
\left\{
\begin{array}{ll}
\displaystyle \sqrt{2\sqrt{2B}} \sqrt x + \frac{2A}{3\sqrt{2B}} x + O(x^{3/2}) & \mbox{if $B>0$, $A \neq 0$} \\
\displaystyle \frac{3^{2/3}A^{1/3}}{2^{1/3}} x^{2/3} - \frac{3^{4/3}c}{10 \sqrt{2}} x^{4/3} + O(x^2) & \mbox{if $B=0$, $A>0$}.
\end{array}
\right.
\end{align*}
\item[(2)] $A=0$ and $B>0$, or $A=B=0$, $c>0$. Then, as $x \to 0$,
$$
\phi(-X+x) = 
\left\{ 
\begin{array}{ll}
\displaystyle \sqrt{2 \sqrt{2B}} \sqrt{x} + \frac{c}{2^{3/2}(2B)^{1/4}}  x^{3/2} + O(x^{2}) & \mbox{if $A=0$ and $B>0$} \vspace{0.1cm}\\
\displaystyle \sqrt{c} x + O(x^3) & \mbox{if $A=B=0$ and $c>0$}.
\end{array}
\right.
$$
\end{itemize}
\end{itemize}
\end{prop}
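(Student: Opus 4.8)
The plan is to run a phase-plane analysis of the first-order equation~\eqref{ODEAB}. Although $F_{A,B,c}$ is singular at $\phi=0$, the rescaled potential
$$
G(\phi):=\tfrac12\phi^2 F_{A,B,c}(\phi)=B+A\phi+\tfrac c2\phi^2-\tfrac1p\phi^p
$$
extends smoothly to $[0,\infty)$, with $G(0)=B$, $G'(0)=A$, and $G(\phi)\to-\infty$ as $\phi\to\infty$ (this is where $p>2$ enters); since $G'(\phi)=A+c\phi-\phi^{p-1}$, its positive critical points are exactly the roots of $A+cz=z^{p-1}$, i.e.\ $z=z_{A,c}$. On $\{\phi>0\}$ one has $\sgn F_{A,B,c}=\sgn G$ and~\eqref{ODEAB} becomes the conserved-energy relation $(\phi\phi')^2=2G(\phi)$. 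First I would record the converse to the derivation in the text: any non-constant solution of~\eqref{ODEAB} solves~\eqref{theODEA}, hence~\eqref{theODE}, since differentiating gives $\phi''=\tfrac12 F_{A,B,c}'(\phi)$ and therefore $\phi(\phi\phi')'=\phi F_{A,B,c}(\phi)+\tfrac12\phi^2 F_{A,B,c}'(\phi)=A+c\phi-\phi^{p-1}$. So the classification reduces to describing the connected pieces of the level set $\{\phi>0,\ (\phi\phi')^2=2G(\phi)\}$.

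Next I would pin down the possible shapes of $G$ on $(0,\infty)$: as $G''(\phi)=c-(p-1)\phi^{p-2}$ changes sign at most once there, $G'$ is monotone or unimodal, so $G$ has at most one local minimum followed by at most one local maximum and then decreases to $-\infty$; this leaves a short list of graphs, governed by the signs of $B$, $A$, $c$ and of the critical value $G(z_{A,c})$ (equivalently $F_{A,B,c}(z_{A,c})$). A positive solution of~\eqref{theODE} traces out one connected component, is monotone between consecutive zeros of $G$, and — by the symmetry $x\mapsto -x$ of~\eqref{theODE} — reflects at each simple zero. The trichotomy then follows from how a component can terminate:
\begin{itemize}
\item if $G\le 0$ near $\phi=0$ (i.e.\ $B<0$, or $B=0$ and $A<0$) and $G$ attains a strictly positive maximum ($F_{A,B,c}(z_{A,c})\ne0$ when $z_{A,c}$ exists), the component is a closed loop inside $\{\phi\ge\phi_1>0\}$, the solution is periodic and global — case (i);
\item if $G>0$ on $(0,\phi_1)$ with $\phi_1$ a \emph{simple} zero of $G$ — which, given $B>0$, or $B=0$ and $A>0$, or $A=B=0$ and $c>0$, is exactly the condition $F_{A,B,c}(z_{A,c})\ne0$ (when $z_{A,c}$ exists) — the solution runs from $\phi=0$ up to $\phi_1$ and back, is even, and lives on a bounded interval — case (iii);
\item if instead $G>0$ on $(0,z_{A,c})$ but $z_{A,c}$ is a \emph{double} zero of $G$ ($F_{A,B,c}(z_{A,c})=0$), then $\int^{z_{A,c}} d\phi/\sqrt{F_{A,B,c}(\phi)}$ diverges, so $\phi\to z_{A,c}$ only as $x\to\infty$ and the solution lives on a half-line — case (ii).
\end{itemize}

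It remains to describe $\phi$ near a finite endpoint, where $\phi\to 0$, by integrating $\tfrac12(\phi^2)'=\pm\sqrt{2G(\phi)}$ with $G$ Taylor-expanded at $0$: when $B>0$, $\sqrt{2G(\phi)}=\sqrt{2B}+\tfrac{A}{\sqrt{2B}}\phi+O(\phi^2)$ gives first $\phi^2\sim 2\sqrt{2B}\,x$ and then, after one iteration, the stated two-term expansions (the correction being of order $x$ if $A\ne0$, of order $x^{3/2}$ if $A=0$); when $B=0$ and $A>0$, $\sqrt{2G(\phi)}\sim\sqrt{2A\phi}$ yields $\phi\sim\frac{3^{2/3}A^{1/3}}{2^{1/3}}x^{2/3}$; and when $A=B=0$ and $c>0$, $\sqrt{2G(\phi)}\sim\sqrt c\,\phi$ yields $\phi\sim\sqrt c\,x$. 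Finiteness of the endpoint itself is the convergence of $\int_0 d\phi/\sqrt{F_{A,B,c}(\phi)}$, which holds in exactly these sub-cases.

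The hard part will not be the endpoint asymptotics or the energy bookkeeping, but the careful treatment of the degenerate configurations: where $G$ has two positive critical points (so $z_{A,c}$ is ambiguous), where $G(z_{A,c})=0$, or where $G'(z_{A,c})=G''(z_{A,c})=0$. One must check that each of these is correctly assigned among (i)--(iii), and in particular that the separatrix/homoclinic configurations attached to a degenerate zero of $G$ are accounted for rather than yielding a fourth type of bounded-energy orbit.
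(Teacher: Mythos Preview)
Your proposal is correct and follows essentially the same phase-plane approach as the paper: classify the connected components of the level set $(\phi')^2=F_{A,B,c}(\phi)$ according to whether the first positive zero of the potential is simple (compacton, case~(iii)), a double zero coinciding with the equilibrium $z_{A,c}$ (half-line, case~(ii)), or bounded away from $\phi=0$ (periodic, case~(i)), and then obtain the endpoint asymptotics by locally integrating $x=\int_0^\phi ds/\sqrt{F_{A,B,c}(s)}$. Your repackaging via the desingularized potential $G(\phi)=\tfrac12\phi^2 F_{A,B,c}(\phi)$ is a mild but genuine improvement over the paper's argument --- it makes the behavior at $\phi=0$ transparent and, as you note, correctly identifies the equilibrium equation as $A+cz=z^{p-1}$ (the exponent $p-2$ in the statement is a typo); your caution about the degenerate configurations is also well placed, since the paper's proof is equally sketchy there.
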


\begin{rem} The case $p=2$ behaves differently from $p>2$, and has therefore not been included in the above. We refer to Section~\ref{subsectionexplicit} for a description of compactons in this case.
\end{rem}

\begin{proof} Recall that, once the integration constants are fixed, $\phi$ satisfies $(\phi')^2 = F_{A,B,c}(\phi)$. Thus, for $A$, $B$ and $c$ fixed, we think of the phase space as foliated by sets of the type $\{ (\phi')^2 = F_{A,B,c}(\phi) \}$.
\begin{figure}[h]
\centering
\includegraphics[width=0.35\textwidth]{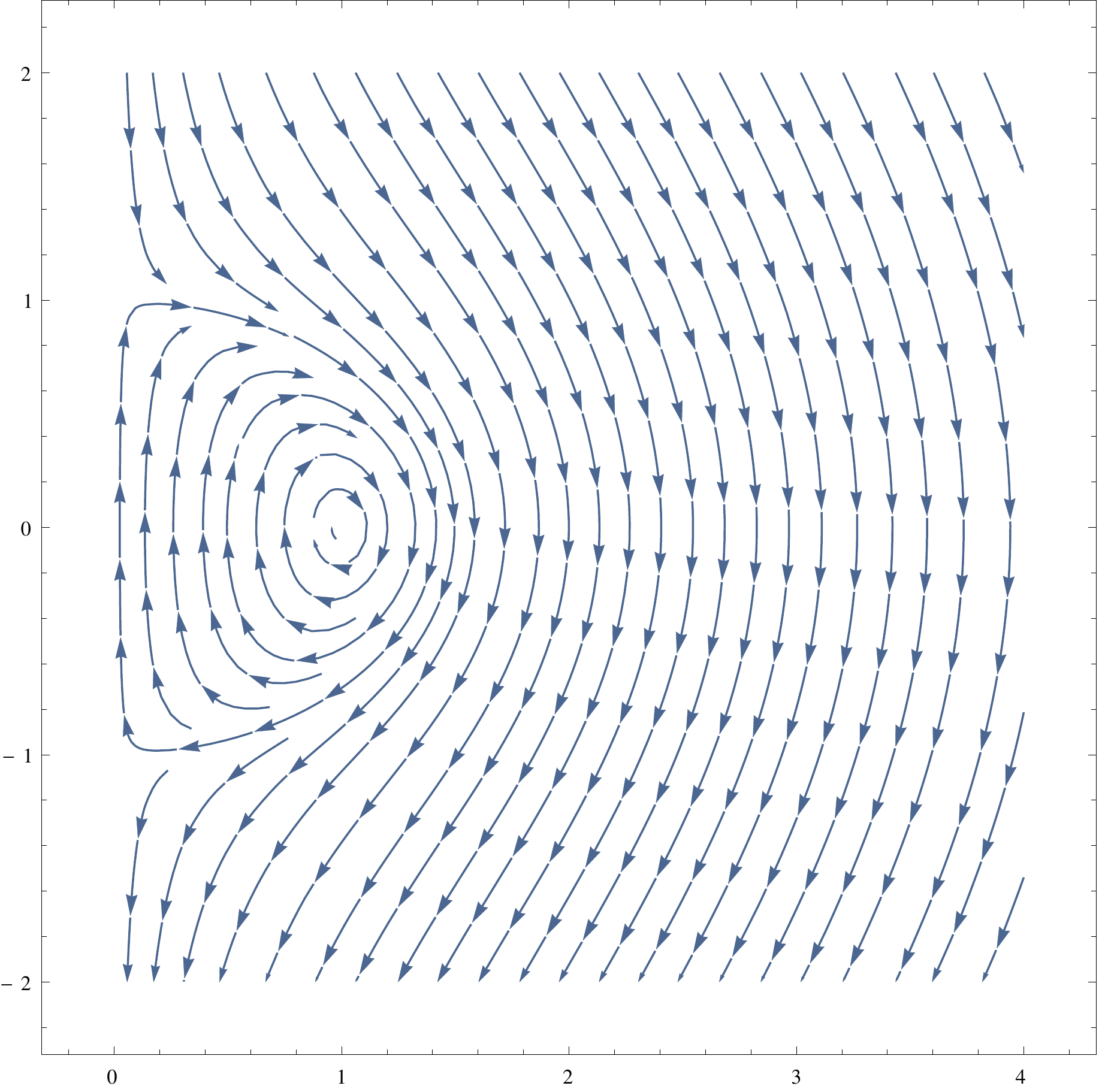}
\caption{The phase portrait for $c=1$, $p=4$, and $A=0$.}
\end{figure}
If the set $\{ (\phi')^2 = F_{A,B,c}(\phi) \}$ is bounded, we are in the situation of $(i)$. 

If it is unbounded, we are either in the situation $(ii)$, or $(iii)$. Situation $(ii)$ corresponds to the case where the stable point $z_{A,c}$ satisfies $F_{A,B,c}(z_{A,c}) = 0$. Notice that for $A=0$, $z_{0,c}$ exists for $c>0$, and satisfies then $(z_{0,c})^{p-1} = c$. Then 
$$
F_{A,B,c}(z_{0,c}) = 0 \quad \Leftrightarrow \quad \frac{2B}{(z_{0,c})^2} = c \left( \frac{2}{p} -1 \right).
$$
Simply comparing signs, we see that the above inequality cannot hold for $B \geq 0$, so that $F_{0,B,c}(z_{0,c}) \neq 0$ for $A=0$, $B \geq 0$.

In order to establish the behavior close to $\phi=0$ in the case $A=0$, $B>0$ (the other cases being similar), we need to integrate locally the ODE
$$
\phi' = \sqrt{\frac{2B}{\phi^2} + c - \frac{2}{p}\phi^{p-2}}=g(\phi).
$$
Expanding $g$, we obtain that 
\begin{align*}
x & = \int_0^\phi \frac{ds}{g(s)} = \int_0^\phi \left[\frac{s}{\sqrt{2B}} - \frac{cs^3}{4B\sqrt{2B}} + O(s^4) \right]\,ds = \frac{\phi^2}{2\sqrt{2B}} - \frac{c \phi^4}{16 B \sqrt{2B}} + O(\phi^5),
\end{align*}
from which we deduce the desired expansion
$$
\phi(x) = \displaystyle \sqrt{2 \sqrt{2B}} \sqrt{x} + \frac{c}{2^{3/2}(2B)^{1/4}} x^{3/2} + O(x^{2}).
$$

We next consider the point $y$ at which $F_{A,B,c}$ vanishes first. If this point is also a stationary point of the phase portrait, i.e. $A + cy = y^{p-2}$, then we are in case $(ii)$. Otherwise, we can prolong the solution by symmetry to obtain case $(iii)$.
\end{proof}

\subsection{The compactons}

We will mostly focus on finite mass (or, equivalently, finite energy) traveling waves. We will denote $\Phi_{B,c}$ for the maximal positive solution corresponding to~$(iii)$ in Proposition~\ref{propODE} with $A=0$, $B>0$, and $c\in \mathbb{R}$; or $A=B=0$, and $c>0$, prolonged by $0$ outside of its domain of positivity, which we denote $(-x_{B,c},x_{B,c})$. Recapitulating,
\begin{itemize}
\item $\Phi_{B,c}$ even.
\item $\Phi_{B,c} >0$ and smooth on $(-x_{B,c},x_{B,c})$.
\item $\Phi_{B,c} = 0$ on $(-\infty,-x_{B,c}) \cup (x_{B,c},\infty)$.
\item $\Phi_{B,c}$ decreasing on $(0,x_{B,c})$. 
\end{itemize}

Let us check that $\Phi_{B,c}$ satisfies~\eqref{theODE} in the sense of distributions on $\mathbb{R}$. This is implied by the equality 
$$
\mbox{for $\phi = \Phi_{B,c}$}, \quad -c \phi + \phi( \phi \phi')' + \phi^{p-1} = 0
$$
in the sense of distributions. The only delicate points are of course $\pm x_{B,c}$; more precisely, it is easy to see that $(\phi \phi')'$ is the sum of a bounded function and of $\delta_{\pm x_{B,c}}$. But since $\phi$ vanishes at $\pm x_{B,c}$, the product $\phi( \phi \phi')'$ is a bounded function; from there it is easy to check that the above holds.

Notice that, defining $\Phi_{A,B,c}$ in a similar way to $\Phi_{B,c}$, it does not satisfy~\eqref{theODE} in the sense of distributions on $\mathbb{R}$ but rather
$$
\mbox{for $\phi = \Phi_{A,B,c}$}, \quad -c \phi + \phi( \phi \phi')' + \phi^{p-1} = A \delta_{-x_{A,B,c}} - A \delta_{x_{A,B,c}}.
$$

This leads to the following proposition:

\begin{cor}
For a velocity $c \in \mathbb{R}$, general solutions of finite mass of~\eqref{theODE} (in the sense of distributions) are of the form
$$
\sum \epsilon_i \Phi_{B_i,c}(x-a_i),
$$
where $a_i \in \mathbb{R}$, either $B_i>0$ or $c>0$, $\epsilon_i = \pm 1$, and the supports of the $\Phi_{B_i,c}(x-a_i)$ are disjoint.
\end{cor}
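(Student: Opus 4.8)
The plan is to localise to the connected components of $\{\phi\neq 0\}$, where~\eqref{theODE} is non-degenerate and Proposition~\ref{propODE} applies, and then to use the \emph{distributional} form of~\eqref{theODE} near the boundary of the support to force the first integration constant to vanish on each component.

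First I would describe the zero set. A finite-mass solution of~\eqref{theODE} in $\mathcal D'(\mathbb R)$ has $\phi\in L^2$ and $\phi^2\in H^1_{\mathrm{loc}}$ (this regularity is already needed for $\phi(\phi\phi')'$ to be a well-defined distribution), so $\phi^2$ is continuous and $U:=\{\phi\neq 0\}=\{\phi^2>0\}$ is a countable disjoint union of maximal open intervals $I_j$. On each $I_j$ the equation is a non-degenerate third-order ODE (top-order coefficient $\phi^2>0$), so a standard bootstrap gives $\phi\in C^\infty(I_j)$, in particular $\phi$ has a constant sign $\epsilon_j$ on $I_j$. Integrating~\eqref{theODE} once and multiplying by $\phi'$ reproduces~\eqref{theODEA} and~\eqref{ODEAB} on $I_j$, with integration constants $A_j,B_j$ depending on $j$; thus $\phi|_{I_j}$ equals, up to the sign $\epsilon_j$ and a translation, one of the profiles classified in Proposition~\ref{propODE} at speed $c$. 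Finite mass excludes the non-compact alternatives: a profile of type $(i)$ is periodic and one of type $(ii)$ tends to $z_{A,c}>0$, and neither lies in $L^2(\mathbb R)$. Hence every $I_j$ is bounded and $\phi|_{I_j}=\epsilon_j\,\Phi_{A_j,B_j,c}(\cdot-a_j)$ with $(A_j,B_j)$ in the range allowed in case $(iii)$.

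The crux is to show $A_j=0$ for all $j$. Set $T:=-c\phi+\phi(\phi\phi')'+\phi^{p-1}$, so~\eqref{theODE} reads $\partial_x T=0$ in $\mathcal D'(\mathbb R)$. Exactly as in the verification above that $\Phi_{B,c}$ solves~\eqref{theODE} distributionally, I would compute $T$ as a distribution: writing $\phi\phi'=\tfrac12(\phi^2)'$, the function $(\phi^2)'$ is smooth on each $I_j$, has finite one-sided limits at the endpoints of the $I_j$ (by the $\phi^2\sim\mathrm{const}\cdot\mathrm{dist}$ asymptotics of Proposition~\ref{propODE}) and vanishes off $\overline U$, so $(\phi\phi')'=\tfrac12(\phi^2)''$ is the sum of an $L^1_{\mathrm{loc}}$ function and a locally finite family of Dirac masses at the endpoints of the $I_j$; since $\phi$ is continuous and vanishes at those endpoints, multiplication by $\phi$ annihilates the Dirac part, so $\phi(\phi\phi')'$ is $L^1_{\mathrm{loc}}$. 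Hence $T$ is the $L^1_{\mathrm{loc}}$ function equal to $A_j$ on $I_j$ (by~\eqref{theODEA}) and to $0$ almost everywhere on $\mathbb R\setminus U$. Since $T$ is constant (its distributional derivative vanishes) and $\mathbb R\setminus U$ has positive measure --- which uses the finite-mass bound, preventing the $I_j$ from exhausting $\mathbb R$ --- we get $T\equiv 0$, hence $A_j=0$ for all $j$. So each piece is $\epsilon_j\,\Phi_{B_j,c}(\cdot-a_j)$ with $B_j>0$, or $B_j=0$ and $c>0$, and, the $I_j$ being distinct connected components, their supports are disjoint: this is the asserted decomposition.

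The main obstacle is the distributional bookkeeping in the last step. One must verify that at each endpoint of an $I_j$ the one-sided limits of $(\phi^2)'$ are finite (so that the singular part of $(\phi^2)''$ is a sum of Diracs and nothing worse) and that $\{I_j\}$ is locally finite, so that $\phi\cdot\delta_{\text{endpoint}}=0$ is meaningful and $\mathbb R\setminus U$ has positive measure; both follow from the endpoint asymptotics of Proposition~\ref{propODE} together with $\phi\in L^2$ (each profile $\Phi_{A_j,B_j,c}$ has its mass, or the length of its support, bounded below, so infinitely many cannot fit into a set of finite measure). The remaining ingredients --- elliptic regularity on each $I_j$ and the $L^2$-exclusion of the periodic and $z_{A,c}$-asymptotic profiles --- are routine given Proposition~\ref{propODE}.
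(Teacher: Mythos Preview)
Your approach coincides with the paper's: the corollary is deduced from the observation (the paragraph immediately preceding it) that a single profile $\Phi_{A,B,c}$, extended by zero, fails~\eqref{theODE} distributionally when $A\neq 0$, since the first integral $T:=-c\phi+\phi(\phi\phi')'+\phi^{p-1}$ equals $A$ on the support and $0$ off it and is therefore not constant. Your write-up is considerably more detailed than the paper's one-line ``This leads to the following proposition,'' and the distributional bookkeeping you carry out is correct.

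There is, however, one step that does not go through as written. You justify $|\mathbb R\setminus U|>0$ by asserting that ``each profile $\Phi_{A_j,B_j,c}$ has its mass, or the length of its support, bounded below.'' This lower bound is not available in general: for fixed $A<0$ and $B\to 0^+$ (any $c$) the maximum of $\Phi_{A,B,c}$ is of order $B/|A|$, and both the support length and the mass tend to zero; the same happens for $A=0$ when $c<0$. One can in fact tile all of $\mathbb R$, up to a countable set, by profiles $\Phi_{A_0,B_j,c}$ with a fixed $A_0<0$ and $B_j\to 0$ chosen so that $\sum_j M_j<\infty$; for the resulting $\phi$ one finds $T\equiv A_0$ almost everywhere and hence $\partial_x T=0$ in $\mathcal D'(\mathbb R)$, so without a further hypothesis (compact support, finite energy, or decay of $T$ at infinity) the conclusion need not follow. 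The paper's own argument, which treats only a single extended profile, shares this lacuna. If the solution is assumed to have compact support --- the case of real interest here --- then $\mathbb R\setminus U$ trivially has infinite measure and your argument is complete.
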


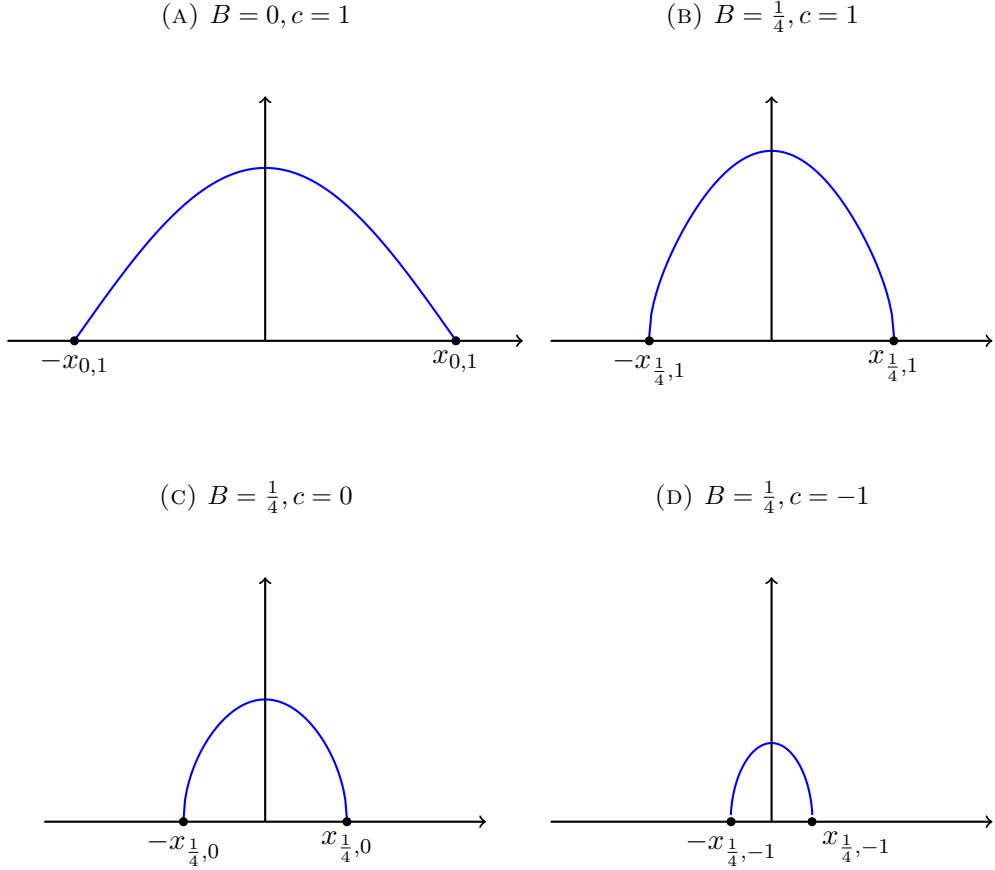
\begin{figure}[h]
\begin{subfigure}[b]{0.4\textwidth}
\caption{\(B = 0,c = 1\)}
\centering

  \begin{tikzpicture}
    \begin{axis}[ xmin=-3, xmax=3, ymin=-1, ymax=2.5, axis x
      line = none, axis y line = none, samples=100 ]

      \addplot[mark=none,domain=-pi/sqrt(2):pi/sqrt(2),thick,blue] {sqrt(1 + cos(deg(sqrt(2)*x)))};

      \draw[black,->,thick] (axis cs:0,0) -- (axis cs:0,2);
      \draw[black,->,thick] (axis cs:-3,0) -- (axis cs:3,0);
      
      \filldraw (axis cs:-2.22,0) circle (1.5pt) node[below] {\(-x_{0,1}\)};
      \filldraw (axis cs:2.22,0) circle (1.5pt) node[below] {\(x_{0,1}\)};

    \end{axis}
  \end{tikzpicture}
\end{subfigure}
\begin{subfigure}[b]{0.4\textwidth}
\caption{\(B = \frac14,c = 1\)}
\centering

  \begin{tikzpicture}
    \begin{axis}[ xmin=-3.5, xmax=3.5, ymin=-1, ymax=2.5, axis x
      line = none, axis y line = none, samples=100 ]

      \addplot[mark=none,domain=-3*pi/(4*sqrt(2)):3*pi/(4*sqrt(2)),thick,blue] {sqrt(1 + sqrt(2)*cos(deg(sqrt(2)*x)))};

      \draw[black,->,thick] (axis cs:0,0) -- (axis cs:0,2);
      \draw[black,->,thick] (axis cs:-3,0) -- (axis cs:3,0);
      
      \filldraw (axis cs:-1.66,0) circle (1.5pt) node[below] {\(-x_{\frac14,1}\)};
      \filldraw (axis cs:1.66,0) circle (1.5pt) node[below] {\(x_{\frac14,1}\)};

    \end{axis}
  \end{tikzpicture}
\end{subfigure}

\begin{subfigure}[b]{0.4\textwidth}
\caption{\(B = \frac14,c = 0\)}
\centering

  \begin{tikzpicture}
    \begin{axis}[ xmin=-3.5, xmax=3.5, ymin=-1, ymax=2.5, axis x
      line = none, axis y line = none, samples=100 ]

      \addplot[mark=none,domain=-pi/(2*sqrt(2)):pi/(2*sqrt(2)),thick,blue] {sqrt( cos(deg(sqrt(2)*x)))};

      \draw[black,->,thick] (axis cs:0,0) -- (axis cs:0,2);
      \draw[black,->,thick] (axis cs:-3,0) -- (axis cs:3,0);
      
      \filldraw (axis cs:-1.11,0) circle (1.5pt) node[below] {\(-x_{\frac14,0}\)};
      \filldraw (axis cs:1.11,0) circle (1.5pt) node[below] {\(x_{\frac14,0}\)};

    \end{axis}
  \end{tikzpicture}
\end{subfigure}
\begin{subfigure}[b]{0.4\textwidth}
\caption{\(B = \frac14,c = -1\)}
\centering

  \begin{tikzpicture}
    \begin{axis}[ xmin=-3.5, xmax=3.5, ymin=-1, ymax=2.5, axis x
      line = none, axis y line = none, samples=500 ]

      \addplot[mark=none,domain=-pi/(4*sqrt(2)):pi/(4*sqrt(2)),thick,blue] {sqrt( -1 + sqrt(2)*cos(deg(sqrt(2)*x)))};

      \draw[black,->,thick] (axis cs:0,0) -- (axis cs:0,2);
      \draw[black,->,thick] (axis cs:-3,0) -- (axis cs:3,0);
      
      \filldraw (axis cs:-0.55,0) circle (1.5pt) node[below] {\(-x_{\frac14,-1}\)};
      \filldraw (axis cs:0.55,0) circle (1.5pt) node[below right] {\(x_{\frac14,-1}\)};

    \end{axis}
  \end{tikzpicture}
\end{subfigure}
\caption{Compactons for \(A = 0, p = 4\)}
\end{figure}

\subsection{Explicit formulas}

\label{subsectionexplicit}

If $p=2$, $A=0$, the equation~\eqref{theODEA} becomes
\begin{equation}
\phi (1-c) + \phi ( \phi \phi')' = 0.
\end{equation}
Setting $\rho = \phi^2$, this becomes
\begin{equation}
\rho'' = 2 (c-1).
\end{equation}
Assuming that $\rho$ is even, this can be integrated to give that $\rho =  (c-1) x^2 + Y$, where $Y = \frac{2B}{1-c}$. This gives the compacton
$$
\mbox{if $c<1$, $B \geq 0$}, \quad \Phi_{B,c} =
\left\{
\begin{array}{cl}
\sqrt{ (c-1) x^2 + \frac{2B}{1-c} } & \mbox{if $|x| < \frac{\sqrt{2B}}{1-c}$} \\
0 & \mbox{otherwise}
\end{array}
\right.
$$

\bigskip

If $p=4$, $A=0$, the equation for $\rho = \phi^2$ becomes linear:
$$
\frac{1}{2}\rho'' + \rho = c.
$$
It can easily be integrated to yield that $\rho = c + Z \cos(\sqrt 2 x)$ with $4B = Z^2 - c^2$, which leads to three cases:
\begin{itemize}
\item If $c>0$ and $0 > 4B>-c^2$, then periodic solutions are given by
$$
\phi(x) = \sqrt{c + \sqrt{4B+c^2} \cos(\sqrt{2} x)} .
$$
\item If $c \in \mathbb{R}$ and $B>0$, then compactons are given by
\begin{equation}
\label{formulacompacton}
\Phi_{B,c}(x) =
\left\{
\begin{array}{ll}
\sqrt{c + \sqrt{4B+c^2} \cos(\sqrt{2} x)} & \mbox{if $x \in [-x_{B,c},x_{B,c}]$} \\
0  & \mbox{if $x \notin [-x_{B,c},x_{B,c}]$},
\end{array}
\right.
\end{equation}
where $x_{B,c}$ is the least positive solution of $\cos(\sqrt 2 x) = - \frac{c}{\sqrt{4B+c^2}}$.
\item If $c>0$ and $B=0$, a compacton is given by
$$
\Phi_{0,c}(x) =
\left\{
\begin{array}{ll}
\sqrt{2c}\cos \left(\frac{x}{\sqrt 2}\right) & \mbox{if $x \in [-\frac{\pi}{\sqrt 2},\frac{\pi}{\sqrt 2}]$} \\
0  & \mbox{if $x \notin [-\frac{\pi}{\sqrt 2},\frac{\pi}{\sqrt 2}]$},
\end{array}
\right.
$$
\end{itemize}

\subsection{The energy and mass of compactons}
Notice first the scaling property: for any $\lambda>0$,
$$
\lambda \Phi_{B,c} (\lambda^{\frac{p}{2}-2} \cdot) = \Phi_{B\lambda^p,c \lambda^{p-2}}.
$$
Next, we record how the Hamiltonian, Mass, and support of $\Phi_{B,c}$ are related:

\medskip
\begin{prop} 
\label{propenergypohozaev}
The compacton $\Phi = \Phi_{B,c}$ satisfies
$$
H(\Phi) = c \left( \frac{p-8}{2p+8} \right) M(\Phi) + \left( \frac{p-4}{p+4} \right) 2B x_{B,c}.
$$
\end{prop}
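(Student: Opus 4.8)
The plan is to produce two integral identities for $\phi := \Phi_{B,c}$ and then solve the resulting $2\times 2$ linear system for $\int \phi^p$ and $\int (\phi\phi')^2$. Write $x_0 := x_{B,c}$ and $I := (-x_0,x_0)$. Recall that $\phi$ is supported on $\overline I$, is smooth and positive on $I$, and satisfies there (with $A=0$) both the profile equation $-c\phi + \phi(\phi\phi')' + \phi^{p-1} = 0$ and the first integral \eqref{ODEAB} in the form $\tfrac12(\phi\phi')^2 = B + \tfrac c2\phi^2 - \tfrac1p\phi^p$. Since the right-hand side of the latter is bounded, $\phi\phi' \in L^\infty(I)\subset L^2(I)$, so $H(\phi)$ is well defined and $H(\phi) = \tfrac12\int_I(\phi\phi')^2 - \tfrac1p\int_I\phi^p$, while $M(\phi) = \int_I\phi^2$.

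First I would integrate the first integral over $I$, obtaining
\[
\tfrac12\int_I(\phi\phi')^2 = 2Bx_0 + \tfrac c2 M(\phi) - \tfrac1p\int_I\phi^p,
\]
and hence $H(\phi) = 2Bx_0 + \tfrac c2 M(\phi) - \tfrac2p\int_I\phi^p$; so the whole computation reduces to evaluating $\int_I\phi^p$. For that I would multiply the profile equation \eqref{theODEA} (with $A=0$) by $\phi$, integrate over $I$, and integrate by parts in the middle term, using $\int_I\phi^2(\phi\phi')' = \bigl[\phi^3\phi'\bigr]_{-x_0}^{x_0} - 2\int_I(\phi\phi')^2$. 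Assuming the boundary term vanishes, this yields the Pohozaev-type identity
\[
\int_I\phi^p = c M(\phi) + 2\int_I(\phi\phi')^2 .
\]

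Combining the two displayed identities to eliminate $\int_I(\phi\phi')^2$ gives $\int_I\phi^p = \tfrac{p}{p+4}\bigl(3c M(\phi) + 8Bx_0\bigr)$; substituting this into $H(\phi) = 2Bx_0 + \tfrac c2 M(\phi) - \tfrac2p\int_I\phi^p$ and collecting the coefficients of $M(\phi)$ and of $Bx_0$ produces exactly $c\frac{p-8}{2p+8}M(\phi) + \frac{p-4}{p+4}\,2Bx_0$, which is the claim.

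The only step that is not pure bookkeeping is the vanishing of the boundary term $\bigl[\phi^3\phi'\bigr]_{-x_0}^{x_0}$ (and, relatedly, the convergence of the integrals near $\pm x_0$), so I expect this mild endpoint analysis to be the main obstacle. It follows from the asymptotics in Proposition~\ref{propODE}: when $B>0$ one has $\phi(x)\sim\sqrt{2\sqrt{2B}}\,|x\mp x_0|^{1/2}$ near an endpoint, whence $\phi^3\phi' = O(|x\mp x_0|)\to 0$; when $B=0$ (so $c>0$), $\phi(x)\sim\sqrt c\,|x\mp x_0|$ and $\phi^3\phi' = O(|x\mp x_0|^3)\to 0$. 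The same endpoint behavior is visible from the explicit formulas of Section~\ref{subsectionexplicit} in the case $p=2$, so the argument covers the full range $p\ge 2$.
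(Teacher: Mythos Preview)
Your proof is correct and follows essentially the same approach as the paper: both derive the same two integral identities---obtained by multiplying the profile equation by $\phi$ and by integrating the first integral over the support---and combine them linearly. (The paper calls these the \emph{energy identity} and the \emph{Pohozaev identity} respectively, in the opposite labeling from yours.) Your explicit endpoint analysis of the boundary term $[\phi^3\phi']_{-x_0}^{x_0}$ is in fact more detailed than the paper's, which simply asserts that the boundary terms vanish.
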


\begin{proof} The desired formula follows by combining the energy identity
$$
c \int \Phi^2 +2 \int (\Phi \Phi')^2 - \int \Phi^p = 0,
$$
which can be obtained by multiplying~\eqref{theODEA} with $A=0$ by $\Phi$, and integrating by parts, making sure that the boundary terms vanish, with the Pohozaev identity
$$
- c \int \Phi^2 + \int (\Phi \Phi')^2 + \frac{2}{p} \int \Phi^p = 4B x_{B,c}
$$
(which can be obtained by multiplying~\eqref{theODEA} with $A=0$ by $x\Phi'$, and integrating; or equivalently, by integrating~$- c \Phi^2 + \Phi^2 (\Phi')^2 + \frac{2}{p} \Phi^p = 2B$ over $[-x_{B,c},x_{B,c}]$).

\end{proof}
\medskip

\begin{prop} \label{propminH}
If $p=4$, the minimum of $H(\Phi_{B,c})$ given $M(\Phi_{B,c}) = m$ is reached for $B=0$ and $c = \frac{m}{\sqrt 2 \pi}$. The value of the minimum is $\displaystyle - \frac{m^2}{4\sqrt 2 \pi}$.
\end{prop}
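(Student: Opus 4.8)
The plan is to exploit the Pohozaev-type identity of Proposition~\ref{propenergypohozaev} together with the explicit formula~\eqref{formulacompacton}, which reduces the statement to a one-variable calculus problem. First, specializing Proposition~\ref{propenergypohozaev} to $p=4$ makes the coefficient $\frac{p-4}{p+4}$ of the $Bx_{B,c}$ term vanish and yields
$$
H(\Phi_{B,c}) = -\frac{c}{4}\,M(\Phi_{B,c}).
$$
Hence along the constraint set $\{M(\Phi_{B,c}) = m\}$ one has $H(\Phi_{B,c}) = -\frac{cm}{4}$, so \emph{minimizing $H$ is the same as maximizing the speed $c$} over the admissible pairs $(B,c)$ (with $B>0$, $c\in\R$, or $B=0$, $c>0$) subject to $M(\Phi_{B,c}) = m$.

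Next I would parametrize the constraint curve. Writing $D := \sqrt{4B+c^2}>0$ and $\alpha := \sqrt 2\, x_{B,c} = \arccos(-c/D) \in (0,\pi]$ (with $\alpha = \pi$ exactly when $B=0$), formula~\eqref{formulacompacton} gives, after an elementary integration,
$$
M(\Phi_{B,c}) = \int_{-x_{B,c}}^{x_{B,c}} \big(c + D\cos(\sqrt 2\, x)\big)\,dx = \sqrt 2\, D\,(\sin\alpha - \alpha\cos\alpha),
$$
while $c = -D\cos\alpha$. The function $\sin\alpha - \alpha\cos\alpha$ has derivative $\alpha\sin\alpha>0$ on $(0,\pi)$ and vanishes at $0$, hence is positive on $(0,\pi]$; so imposing $M(\Phi_{B,c}) = m$ determines $D = m\big/\big(\sqrt 2(\sin\alpha-\alpha\cos\alpha)\big)$, and therefore the speed along the constraint curve is the explicit function
$$
c(\alpha) = \frac{-\,m\cos\alpha}{\sqrt 2\,(\sin\alpha - \alpha\cos\alpha)}, \qquad \alpha\in(0,\pi].
$$

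It then remains to check that $c(\alpha)$ attains its maximum at $\alpha = \pi$ (i.e.\ $B=0$), where $c(\pi) = m/(\sqrt 2\,\pi)$, so that $c = m/(\sqrt 2\,\pi)$ and $H = -\frac{c(\pi)}{4}m = -m^2/(4\sqrt 2\,\pi)$, the claimed value. For $\alpha\in(0,\pi/2]$ one has $\cos\alpha\ge 0$, so $c(\alpha)\le 0 < m/(\sqrt 2\,\pi)$. For $\alpha\in(\pi/2,\pi)$, both $c(\alpha)$ and $m/(\sqrt 2\,\pi)$ are positive, and clearing the (positive) denominators the inequality $c(\alpha) < m/(\sqrt 2\,\pi)$ becomes $(\alpha-\pi)\cos\alpha < \sin\alpha$; substituting $\beta = \pi-\alpha\in(0,\pi/2)$ turns this into $\beta\cos\beta < \sin\beta$, that is $\beta < \tan\beta$, which holds on $(0,\pi/2)$. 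This gives the strict maximum at $\alpha=\pi$ and concludes the argument.

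There is no serious obstacle here; the step that deserves a little care is the transition to the degenerate endpoint $B=0$, where the explicit formula for $\Phi_{B,c}$ in~\eqref{formulacompacton} changes form, but this is handled by noting that $\alpha=\pi$ corresponds exactly to $B=0$ and that $c(\alpha)$ extends continuously up to $\alpha=\pi$. As an alternative to the $\alpha$-parametrization one may instead work directly from the identity $M(\Phi_{B,c}) = 2c\,x_{B,c} + 2\sqrt 2\,\sqrt B$ combined with $\cos(\sqrt 2\, x_{B,c}) = -c/\sqrt{4B+c^2}$, but the parametrized form above makes the final optimization the cleanest.
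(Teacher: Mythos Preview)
Your proof is correct and follows essentially the same route as the paper: specialize Proposition~\ref{propenergypohozaev} to $p=4$ to get $H(\Phi_{B,c})=-\tfrac{c}{4}M(\Phi_{B,c})$, reduce to maximizing $c$ under the mass constraint, and then use the explicit formula~\eqref{formulacompacton} to compute $M(\Phi_{B,c})$ and optimize in the one remaining parameter. The only cosmetic difference is that the paper parameterizes directly by $x_{B,c}$, writes $m=c\big(2x_{B,c}-\sqrt2\tan(\sqrt2\,x_{B,c})\big)$, and invokes monotonicity of $y\mapsto 2y-\sqrt2\tan(\sqrt2 y)$ on $[\tfrac{\pi}{2\sqrt2},\tfrac{\pi}{\sqrt2}]$, whereas you parameterize by $(\alpha,D)$ and verify the endpoint maximum by the direct comparison $\beta<\tan\beta$; these are equivalent computations.
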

\begin{proof} From Proposition~\eqref{propenergypohozaev}, we learn if $p=4$ that
$$
H(\Phi_{B,c}) = - \frac{c}{4}  M(\Phi_{B,c}).
$$
This implies first that the minimum of $H$ is reached for $c>0$, which we assume from now on; equivalently, $x_{B,c} \in [\frac{\pi}{2\sqrt 2},\frac{\pi}{\sqrt 2}]$. Next, since $M(\Phi_{B,c})$ is fixed at $m$, the above becomes
$$
H(\Phi_{B,c}) = - \frac{c}{4} m.
$$
Thus our aim becomes: find the the compacton with mass $m$ and the largest speed $c$. An easy computation using the formula~\eqref{formulacompacton} gives
$$
m = M(\Phi_{B,c}) = c ( 2 x_{B,c} - \sqrt{2} \tan ( \sqrt{2} x_{B,c} )).
$$
Since the function $y \mapsto 2y - \sqrt{2} \tan(\sqrt 2 y)$ is decreasing on $[\frac{\pi}{2\sqrt 2},\frac{\pi}{\sqrt 2}]$, we find that the largest value of $c$ is reached when $x_{B,c} = \frac{\pi}{\sqrt 2}$, or equivalently $B=0$.
\end{proof}

\begin{prop} If $p=2$, there is no compacton $\Phi_{B,c}$ which achieves the minimum of $H(\Phi_{B,c})$ for $M(\Phi_{B,c})$ fixed.
\end{prop}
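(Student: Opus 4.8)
The plan is to use the explicit description of the $p=2$ compactons from Section~\ref{subsectionexplicit}. Recall that for $p=2$, $A=0$ one must have $c<1$; writing $a=1-c>0$, the compacton is $\Phi_{B,c}(x)=\sqrt{a\,(x_{B,c}^2-x^2)}$ on $(-x_{B,c},x_{B,c})$, with $x_{B,c}=\sqrt{2B}/a$ and $B>0$ (the value $B=0$ giving the trivial solution). It is convenient to set $L=x_{B,c}$, so that $\rho:=\Phi_{B,c}^2=a(L^2-x^2)$ and $\Phi_{B,c}\Phi_{B,c}'=\tfrac12\rho'=-ax$ on the support.

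First I would compute the two relevant quantities. Integrating $\rho$ gives $M(\Phi_{B,c})=\int_{-L}^L a(L^2-x^2)\,dx=\tfrac{4}{3}aL^3$. For the Hamiltonian I would use the energy identity from the proof of Proposition~\ref{propenergypohozaev} (multiply \eqref{theODEA} with $A=0$ by $\Phi$ and integrate by parts, the boundary terms vanishing since $\Phi$ vanishes at $\pm L$), which for $p=2$ reads $(1-c)\int\Phi^2=2\int(\Phi\Phi')^2$, i.e. $\int(\Phi_{B,c}\Phi_{B,c}')^2=\tfrac{a}{2}M(\Phi_{B,c})$. Hence
\[
H(\Phi_{B,c})=\tfrac12\int(\Phi_{B,c}\Phi_{B,c}')^2-\tfrac12\int\Phi_{B,c}^2=\frac{a-2}{4}\,M(\Phi_{B,c})=-\,\frac{1+c}{4}\,M(\Phi_{B,c}),
\]
which one can double-check by the direct computation $\int_{-L}^L a^2x^2\,dx=\tfrac23 a^2L^3$.

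Now fix $M(\Phi_{B,c})=m>0$. The key observation is that the constraint $\tfrac{4}{3}aL^3=m$ can be solved for \emph{every} $a\in(0,\infty)$, equivalently every $c\in(-\infty,1)$, by taking $L=(3m/4a)^{1/3}$ and $B=\tfrac12 a^2L^2>0$; so the admissible speeds of mass-$m$ compactons fill the open half-line $c<1$. Along this family $H(\Phi_{B,c})=-\tfrac{m}{4}(1+c)$ is strictly decreasing in $c$, with $\inf_{c<1}H=-\tfrac{m}{2}$, and this infimum is not attained, because $c=1$ is excluded: at $c=1$ equation \eqref{theODEA} with $A=B=0$ forces $(\Phi\Phi')'=0$, hence $\Phi$ constant, so there is no finite-mass compacton at this speed. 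Therefore no $\Phi_{B,c}$ realizes the minimum of $H$ at fixed mass, which is the claim; in contrast with the case $p=4$ of Proposition~\ref{propminH}, $H$ is bounded below by $-m/2$ on this family but the bound is never achieved. The computation itself is routine; the only point needing care is this last one, namely checking that the admissible parameter range really is the open interval $c<1$, so that the infimum is approached but never attained. (As stated, the argument only excludes minimizers inside the explicit family $\Phi_{B,c}$, which is exactly what is asserted.)
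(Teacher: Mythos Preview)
Your proof is correct and follows essentially the same approach as the paper: compute $M(\Phi_{B,c})$ and $H(\Phi_{B,c})$ explicitly from the $p=2$ formula, obtain the relation $H=-\tfrac{1+c}{4}M$, and observe that the infimum over the admissible range $c<1$ is approached only as $c\to 1$, which is excluded. You add the useful detail (left implicit in the paper) that every $c<1$ is actually realized at fixed mass $m$; one small slip is that at $c=1$ with general $B$ one gets $\Phi^2$ linear rather than $\Phi$ constant, but the conclusion---no nontrivial compacton---is unchanged.
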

\begin{proof}
A simple computation gives that
$$
M(\Phi_{B,c}) = \frac{4}{3} \frac{(2B)^{3/2}}{(1-c)^2}\quad \mbox{while} \quad H(\Phi_{B,c}) = - \frac{(2B)^{3/2}}{3} \frac{1+c}{(1-c)^2} = -\frac{1+c}{4} M(\Phi_{B,c}),
$$
so that the infimum is achieved for $c \to 1$; but $c=1$ is not allowed for the compactons $\Phi_{B,c}$.
\end{proof}

\subsection{Variational analysis for $p<8$}

\label{subsectionvariationaldKdV}

\begin{thm}[Ground state] If $2<p<8$, then for any $m>0$, $\displaystyle \inf_{M(u) = m} H(u) > -\infty$, and this variational problem admits minimizers. Modulo translation, they are of the form $\pm \Phi_{B,c}$ with $M(\Phi_{B,c}) = m$.

If $p=4$, the minimizing compacton is
$$
\Phi_{0,c}(x), \quad \mbox{with} \; \;c = \frac{m}{\sqrt 2 \pi}.
$$
\end{thm}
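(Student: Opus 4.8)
The plan is to run Lions's concentration--compactness method after the substitution $v=u^{2}$, which turns the degenerate quasilinear functional into an honest Sobolev functional. Since $\frac12|u\,\partial_x u|^{2}=\frac18|(u^{2})'|^{2}$ and $\int|u|^{p}=\int v^{p/2}$, and since replacing $u$ by $|u|$ changes neither $H$ nor $M$, the problem is equivalent to minimizing $\tilde H(v):=\tfrac18\int(v')^{2}-\tfrac1p\int v^{p/2}$ over $v\ge 0$ with $\int v=m$; write $I_m$ for the infimum. Here $v$ ranges over functions with $v\in L^{1}$ and $v'\in L^{2}$ (such $v$ automatically lie in every $L^{q}$, $1\le q\le\infty$, so all integrals are finite), and a minimizer $v$ produces the sought minimizer $u=\sqrt v\ge0$, for which $u\,u'=\tfrac12 v'$ and $H(u)=\tilde H(v)$.

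\emph{Finiteness, precompactness, and strict subadditivity.} The one--dimensional Gagliardo--Nirenberg inequality gives $\int v^{p/2}\le C\,m^{(p+4)/6}\,\|v'\|_{L^{2}}^{(p-2)/3}$; since $\tfrac{p-2}{3}<2\iff p<8$, Young's inequality yields $\tfrac1p\int v^{p/2}\le\varepsilon\|v'\|_{L^{2}}^{2}+C(\varepsilon,m)$, so $I_{m}>-\infty$, and along any minimizing sequence $(v_{n})$ the seminorm $\|v_{n}'\|_{L^{2}}$ is bounded; together with $\|v_{n}\|_{L^{1}}=m$ this bounds $v_{n}$ in $L^{\infty}$, hence in $H^{1}$. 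Testing with $v_{\lambda}(x)=\lambda v(\lambda x)$ and letting $\lambda\to0$ shows $I_{m}<0$ (again using $p<8$). Moreover, under $u\mapsto\lambda\,u(\lambda^{p/2-2}\,\cdot\,)$ one has $H\mapsto\lambda^{p/2+2}H$ and $M\mapsto\lambda^{4-p/2}M$, hence $I_{m}=m^{\gamma}I_{1}$ with $\gamma=\tfrac{p+4}{8-p}$; for $2<p<8$ we have $\gamma>1$ and $I_{1}<0$, so
\[
I_{\alpha}+I_{m-\alpha}=\bigl(\alpha^{\gamma}+(m-\alpha)^{\gamma}\bigr)m^{-\gamma}I_{m}>I_{m}\qquad(0<\alpha<m).
\]

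\emph{Concentration--compactness and identification.} Apply the concentration--compactness trichotomy to the densities $v_{n}\,dx$. Vanishing is impossible: $v_{n}$ being bounded in $H^{1}\cap L^{\infty}$, vanishing forces $\sup_{y}\int_{y-1}^{y+1}v_{n}^{2}\to0$ (bound $\int v_n^2\le\|v_n\|_{L^\infty}\int v_n$), so Lions's lemma gives $v_{n}\to0$ in $L^{s}$ for $2<s<\infty$, and interpolation with the $L^{1}$ bound extends this to $L^{p/2}$ (here $p>2$ enters); then $\liminf\tilde H(v_{n})\ge0>I_{m}$, a contradiction. Dichotomy is ruled out by the displayed inequality: writing $v_{n}=\chi v_{n}+(1-\chi)v_{n}$ with the transition zone carrying mass $\to0$, and again using $\int v_{n}^{2}\le\|v_{n}\|_{L^{\infty}}\int v_{n}$ there to control cross terms, one gets $\tilde H(v_{n})\ge\tilde H(\chi v_{n})+\tilde H((1-\chi)v_{n})-o(1)$, hence $I_{m}\ge I_{\alpha}+I_{m-\alpha}$ for the relevant split of mass, which is impossible. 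Thus, up to translation, $(v_{n})$ is tight; then $v_{n}\rightharpoonup v$ in $H^{1}$ and $v_{n}\to v$ in every $L^{s}$, $1\le s<\infty$, so $\int v=m$ and $\int v^{p/2}=\lim\int v_{n}^{p/2}$, while weak lower semicontinuity of $\|\cdot\|_{\dot H^{1}}$ gives $\tilde H(v)\le I_{m}$; hence $v$ is a minimizer and $u=\sqrt v\ge0$ minimizes the original problem. The minimizer satisfies the Euler--Lagrange equation $-u(uu')'-u^{p-1}=2\mu u$, i.e. \eqref{theODEA} with $A=0$ and $c=-2\mu$; by Proposition~\ref{propODE} and the Corollary following it, a nonnegative finite--mass distributional solution is a finite sum $\sum\Phi_{B_{i},c}(\cdot-a_{i})$ of compactons with disjoint supports, and if there were more than one summand then $H(u)=\sum H(\Phi_{B_{i},c})\ge\sum I_{M(\Phi_{B_{i},c})}>I_{m}$ by strict subadditivity --- impossible. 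So $u=\Phi_{B,c}(\cdot-a)$ with $M(\Phi_{B,c})=m$, and restoring the sign the minimizers are $\pm\Phi_{B,c}$. Finally, for $p=4$ the minimizer is some $\Phi_{B,c}$ of mass $m$ realizing $\min\{H(\Phi_{B',c'}):M(\Phi_{B',c'})=m\}$, which by Proposition~\ref{propminH} is $\Phi_{0,c}$ with $c=m/(\sqrt2\,\pi)$.

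\emph{Main obstacle.} The crux is the trichotomy; excluding dichotomy means taming the quasilinear, degenerate kinetic term, and the substitution $v=u^{2}$ is precisely what makes the cutoff estimates routine, since then the kinetic part is a genuine $\dot H^{1}$ seminorm, additive over disjoint supports up to errors controlled by the vanishing mass in the transition zone. The role of the endpoints is transparent in this scheme: $p>2$ is used only in the exclusion of vanishing (and in $\gamma>1$), while $p<8$ is used only in the finiteness and precompactness of minimizing sequences.
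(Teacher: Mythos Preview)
Your proof is correct and follows essentially the same strategy as the paper: substitute $v=u^{2}$ to turn the problem into a semilinear one, use Gagliardo--Nirenberg and the scaling law $I_{m}=m^{(p+4)/(8-p)}I_{1}$ to get boundedness and strict subadditivity, apply concentration--compactness, derive the Euler--Lagrange equation, identify the minimizer as a sum of compactons, and use subadditivity once more to reduce to a single $\Phi_{B,c}$; the case $p=4$ is then Proposition~\ref{propminH}.

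The one genuine difference is the concentration--compactness tool: the paper invokes a profile decomposition (a proposition adapted from \cite{MR2180464}) and argues that only one profile can carry mass, whereas you run Lions's classical trichotomy, ruling out vanishing via Lions's lemma and dichotomy via the strict subadditivity inequality. Both are standard and equivalent here; your version is the older approach and is perhaps more self-contained, while the profile decomposition packages the vanishing/dichotomy analysis into a single black box. One small point where the paper is slightly more careful: because of the constraint $v\ge 0$, the Euler--Lagrange equation is derived only on each open interval where $v>0$ (and the paper even allows, conservatively, for the Lagrange multiplier to vary between intervals). Your statement that the minimizer ``satisfies the Euler--Lagrange equation'' globally is a little quick, but since you then immediately invoke Proposition~\ref{propODE} and its corollary --- which classify distributional solutions as disjoint sums of compactons --- and then rule out multiple pieces by subadditivity, the argument goes through without change.
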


\begin{rem} It is probably the case that the minimizer is given by $\Phi_{0,c}$ for any $p \in (2,8)$.
\end{rem}

\begin{proof} Switching to the unknown function $\rho = u^2$, the problem becomes that of minimizing
$$
\mathcal{H}(\rho) = \frac{1}{8} \int (\rho')^2 - \frac{1}{p} \int \rho^{\frac{p}{2}}, \quad \mbox{subject to} \quad \mathcal{M}(\rho) = \int \rho\,dx = m \quad \mbox{and} \quad \rho \geq 0.
$$

The Gagliardo-Nirenberg inequality
\begin{equation}
\label{L1L2GNineq}
\| \rho \|_{L^{\frac{p}{2}}} \lesssim \| \rho \|_{L^1}^{\frac{1}{3} + \frac{4}{3p}} \| \rho' \|_{L^2}^{\frac{2}{3} \left( 1 - \frac{2}{p} \right)}
\end{equation}
implies that
$$
I_m = \inf_{\mathcal{M}(\rho) = m} \mathcal{H} (\rho) \gtrsim \inf \| \rho' \|_{L^2}^2 - \| \rho' \|_{L^2}^{\frac{2}{3} \left( \frac{p}{2} -1 \right)}  > -\infty.
$$
Furthermore, $I_m <0$ since, for any $\rho_0 \in \mathcal{C}_0^\infty$, $\rho_0 \geq 0$, and any $\lambda>0$, we have $\mathcal{M}(\lambda \rho_0 (\lambda \cdot)) = \mathcal{M}(\rho_0)$ while
$$
\mathcal{H}(\lambda \rho_0 (\lambda \cdot)) = \frac{\lambda^3}{8} \int (\rho_0')^2 - \frac{\lambda^{\frac{p}{2}-1}}{p} \int \rho^{\frac{p}{2}} < 0 \quad \mbox{for $\lambda$ sufficiently small}.
$$
Finally, observe the scaling law:
$$
I_m = m^{\frac{p+4}{8-p}} I_1.
$$
It suffices to show the existence of a minimizer for $m=1$, so we consider a minimizing sequence $\rho_n$ for $\mathcal{H}$ such that $\rho_n \geq 0$, $M(\rho_n) =1$. We now apply the following concentration compactness result

\begin{prop}
Consider $(\rho_n)$ a bounded sequence of nonnegative functions in $H^1(\mathbb{R}) \cap L^1(\mathbb{R})$. Then there exists a subsequence (still denoted $(\rho_n)$), a family of sequences $(x_n^j)$, and functions $(V^j)$ such that, defining furthermore $\rho_n^J$ by
$$
\rho_n(x) = \sum_{j=1}^J V^j (x - x_n^j) + \rho_n^J,
$$
there holds
\begin{itemize}
\item $\forall k \neq j \in \mathbb{N}$, $|x_n^k - x_n^j| \to \infty$ as $n \to \infty$;
\item $\forall j \in \mathbb{N}$, $V^j \geq 0$ and $V^j \in H^1 \cap L^1$;
\item $\limsup_{n \to \infty} \| \rho_n^J \|_{L^q} \to 0$ as $J \to \infty$ for all $q \in (1,\infty)$;
\item $\| \partial_x \rho_n \|_{L^2}^2 - \sum_{j=1}^J \| \partial_x V^j \|_{L^2}^2 - \| \partial_x \rho_n^J \|_{L^2}^2 \to 0$ as $n \to \infty$;
\item $\| \rho_n \|_{L^q}^q - \sum_{j=1}^J \| V^j \|_{L^q}^q - \| \rho_n^J \|_{L^q}^q \to 0$ as $n \to \infty$ for all $q \in [1,\infty)$.
\end{itemize}
\end{prop}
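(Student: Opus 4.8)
The plan is to prove this by the standard iterated‑extraction (profile decomposition) argument, adapted to dimension one and to $L^1$ data, the only non‑routine features being that the profiles $V^j$ inherit nonnegativity and that the remainder stays bounded in $L^1$. \emph{Step 1 (a one‑dimensional concentration lemma).} I would first record the one‑dimensional facts that $H^1(\mathbb R)\hookrightarrow L^\infty(\mathbb R)$ with $\|f\|_{L^\infty}^2\le 2\|f\|_{L^2}\|f'\|_{L^2}$, that every $f\in H^1(\mathbb R)$ satisfies $|f(x)-f(y)|\le\|f'\|_{L^2}|x-y|^{1/2}$ and tends to $0$ at infinity, and that $H^1_{\mathrm{loc}}$ embeds compactly into $C_{\mathrm{loc}}$. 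From these I get the key lemma: if $(g_n)$ is bounded in $H^1(\mathbb R)$ and $\eta:=\limsup_n\|g_n\|_{L^\infty}>0$, then along a subsequence there are points $y_n$ with $|g_n(y_n)|\ge\eta/2$, and along a further subsequence $g_n(\cdot+y_n)$ converges weakly in $H^1$, locally uniformly, and a.e. to a limit $V$ with $\|V\|_{L^\infty}\ge\eta/2$; if moreover $g_n\ge0$ then $V\ge0$, and by Fatou $V\in L^1$ once the $g_n$ are uniformly bounded in $L^1$.

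\emph{Step 2 (iteration and diagonal extraction).} Put $\rho_n^0=\rho_n$. Given $\rho_n^{J-1}$, set $\eta_J:=\limsup_n\|\rho_n^{J-1}\|_{L^\infty}$; if $\eta_J=0$ stop (take $V^j\equiv0$, $x_n^j\equiv0$ for $j\ge J$); otherwise Step 1 produces $x_n^J$ and $V^J\ge0$, $V^J\in H^1\cap L^1$, with $\|V^J\|_{L^\infty}\ge\eta_J/2$, and I set $\rho_n^J:=\rho_n^{J-1}-V^J(\cdot-x_n^J)$. Standard arguments then give: $|x_n^J-x_n^j|\to\infty$ for $j<J$ (else $x_n^J-x_n^j$ would converge and, with the induction hypothesis $\rho_n^{J-1}(\cdot+x_n^j)\rightharpoonup0$, would force $V^J=0$, contradicting $\|V^J\|_{L^\infty}>0$); $\rho_n^J(\cdot+x_n^j)\rightharpoonup0$ weakly in $H^1$ and locally uniformly for every $j\le J$; and $V^J\ge0$ because each $V^j(\cdot-x_n^j)$ with $j<J$ tends to $0$ locally uniformly while $\rho_n(\cdot+x_n^J)$ converges locally uniformly to $V^J$. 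A diagonal extraction yields one subsequence (still denoted $\rho_n$) along which all of this holds simultaneously for every $J$.

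\emph{Step 3 (decoupling and conclusion).} For the $\dot H^1$ identity I expand $\|\partial_x\rho_n\|_{L^2}^2$ using $\partial_x\rho_n=\partial_x\rho_n^J+\sum_{j\le J}\partial_xV^j(\cdot-x_n^j)$ and check that the cross terms tend to $0$: $\langle\partial_xV^j(\cdot-x_n^j),\partial_xV^k(\cdot-x_n^k)\rangle\to0$ since $|x_n^j-x_n^k|\to\infty$ and the factors lie in $L^2$, and $\langle\partial_x\rho_n^J,\partial_xV^j(\cdot-x_n^j)\rangle=\langle\partial_x[\rho_n^J(\cdot+x_n^j)],\partial_xV^j\rangle_{L^2}\to0$ since $\rho_n^J(\cdot+x_n^j)\rightharpoonup0$ in $H^1$. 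For the $L^q$ identity, $q\in[1,\infty)$, I apply the Brezis–Lieb lemma at each stage ($\rho_n^{j-1}(\cdot+x_n^j)\to V^j$ a.e. and the $\rho_n^{j-1}$ are $L^q$‑bounded, so $\|\rho_n^{j-1}\|_{L^q}^q-\|\rho_n^j\|_{L^q}^q\to\|V^j\|_{L^q}^q$) and telescope over $j\le J$. Letting $n\to\infty$ then $J\to\infty$, and using that $(\rho_n)$ is $H^1$‑bounded, gives $\sum_j\|V^j\|_{H^1}^2<\infty$, hence $\|V^j\|_{L^\infty}\lesssim\|V^j\|_{H^1}\to0$, hence $\eta_{J+1}=\limsup_n\|\rho_n^J\|_{L^\infty}\le2\|V^{J+1}\|_{L^\infty}\to0$ as $J\to\infty$. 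To upgrade this to $\limsup_n\|\rho_n^J\|_{L^q}\to0$ for $q\in(1,\infty)$ via $\|\rho_n^J\|_{L^q}^q\le\|\rho_n^J\|_{L^\infty}^{q-1}\|\rho_n^J\|_{L^1}$ I need $\|\rho_n^J\|_{L^1}$ bounded uniformly in $n$ and $J$; here positivity enters: from $\rho_n\ge0$ and $V^j\ge0$ one has $(\rho_n^J)_-\le\sum_{j\le J}V^j(\cdot-x_n^j)$, and splitting $\mathbb R$ into fixed neighborhoods of the $x_n^j$ (where $\rho_n^J\to0$ uniformly) and their complement (where $\sum_jV^j(\cdot-x_n^j)$ has integral $\le\sum_{j\le J}\int_{|y|>R}V^j$, small for $R$ large) shows $\int(\rho_n^J)_-\to0$; then $\int\rho_n=\int\rho_n^J+\sum_{j\le J}\int V^j$ forces $\sum_j\|V^j\|_{L^1}\le\limsup_n\|\rho_n\|_{L^1}<\infty$, so $\|\rho_n^J\|_{L^1}\le\|\rho_n\|_{L^1}+\sum_j\|V^j\|_{L^1}$ is uniformly bounded. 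This completes the proof.

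\emph{Main obstacle.} The delicate step is the $L^1$ accounting in Step 3: unlike the Hilbertian $\dot H^1$ norm and the $L^q$ norms with $q>1$, the $L^1$ norm does not decouple by soft functional‑analytic arguments, and without exploiting $\rho_n\ge0$ there is no reason for $\sum_j\|V^j\|_{L^1}$ to converge or for the remainder to remain $L^1$‑bounded. A secondary, dimension‑specific point is that the pointwise assertions ``$V^j\ge0$'' and ``$V^j\ne0$'' rely on the local compactness $H^1_{\mathrm{loc}}\hookrightarrow C_{\mathrm{loc}}$ (uniform Hölder continuity), which in higher dimensions would only be available in a weaker, weak‑convergence form.
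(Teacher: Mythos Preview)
Your proof is correct and complete. The paper itself gives essentially no argument: it simply cites Proposition~3.1 of Hmidi--Keraani and says the proof goes through \emph{mutatis mutandis} once the concentration functional is changed to $\eta(v)=\sup\{\|V\|_{H^1}+\|V\|_{L^1}:V\in\mathcal V(v_n)\}$.

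Your route is the same iterated-extraction scheme, but with a different choice of concentration functional: you drive the iteration with $\eta_{J}=\limsup_n\|\rho_n^{J-1}\|_{L^\infty}$ rather than the Hmidi--Keraani ``largest profile'' functional measured in $H^1\cap L^1$. Both choices are standard and equivalent in dimension one (where $H^1\hookrightarrow L^\infty$), and both force $\eta_J\to 0$ and hence the $L^q$ smallness of the remainder for $q>1$. Where the paper's modification of $\eta$ is meant to encode the $L^1$ control directly into the profile selection, you instead recover the $L^1$ bookkeeping \emph{a posteriori} from nonnegativity: your argument that $\int(\rho_n^J)_-\to 0$ and hence $\sum_j\|V^j\|_{L^1}\le\limsup_n\|\rho_n\|_{L^1}$ is exactly the point the paper's one-line citation leaves implicit, and it is the only place where $\rho_n\ge 0$ is genuinely used. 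Your version has the advantage of being self-contained; the paper's has the advantage of brevity.
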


The proof follows mutatis mutandis from the proof of Proposition 3.1. in~\cite{MR2180464} after the definition of $\eta$ has been changed to $\eta(v) = \sup \{ \| V \|_{H^1} + \| V \|_{L^1}, \; V \in \mathcal{V}(v_n) \}$.

We apply this proposition to the minimizing sequence $(\rho_n)$, implying first that
$$
\mathcal{H}(\rho_n) - \sum_{j=1}^J \mathcal{H}(V^j) - \frac{1}{8} \| \partial_x \rho_n^J \|_{L^2}^2 + \frac{1}{p} \| \rho_n^J \|_{L^{p/2}}^{p/2} \to 0 \quad \mbox{as $n \to \infty$}
$$
Letting first $n \to \infty$, and then $J \to \infty$, the above proposition implies that
\begin{align*}
\liminf_{n \to \infty} \mathcal{H}(\rho_n) \geq \sum_{j=1}^\infty \mathcal{H} (V^j).
\end{align*}
Denoting $m_j = \mathcal{M}(V^j)$, the above proposition gives that $\sum_j m_j \leq 1$. Using the definition of $I_m$, its scaling, its negativity, and convexity, the last inequality implies that
\begin{align*}
I_1 = \liminf_{n \to \infty} \mathcal{H}(\rho_n) \geq \sum_{j=1}^\infty \mathcal{H} (V^j) \geq I_1 \sum_{j=1}^\infty m_j^{\frac{p+4}{8-p}} \geq I_1 \left( \sum m_j \right)^{\frac{p+4}{8-p}} \geq I_1.
\end{align*}
All the inequality signs above have to be equality signs; but this is only possible if (up to relabeling) $m_1=1$ while $m_j = 0$ for $j \geq 2$. Then $\rho = V_1$ is the desired minimizer!

Since $\rho \in H^1$, it is continuous. Consider an interval $I$ where it is positive, choose $\psi \in \mathcal{C}_0^\infty(I)$, and let $\widetilde{\rho}_\epsilon = \frac{1}{1+\epsilon \int \psi} (\rho + \epsilon \psi)$. Since $\mathcal{M}(\widetilde{\rho}_\epsilon) \geq \mathcal{M}(\rho)$ and $\mathcal{H}(\widetilde{\rho}_\epsilon) \geq \mathcal{H}(\rho)$, we obtain that $\rho$ satisfies
$$
\rho'' + 2 \rho^{\frac{p}{2} - 1} = \lambda,
$$
for a Lagrange multiplier $\lambda$ which might depend on $I$. Coming back to $\phi$, this implies that
$$
\phi = \sum_i \Phi_{B_i,c_i}(x-a_i),
$$
where $\epsilon_i = \pm 1$, $B_i >0$, $c_i \in \mathbb{R}$, and the $a_i \in \mathbb{R}$ are such that the supports of the $\Phi_{B_i,c_i}(\cdot-a_i)$ are disjoint. Letting $m_i = M (\Phi_{B_i,c_i}(x-a_i))$, we have $\sum m_i = 1$. But then
$$
I_1 = H(\phi) = \sum_i H(\Phi_{B_i,c_i}(x-a_i)) \geq I_1 \sum_{i=1}^\infty m_i^{\frac{p+4}{8-p}} \geq I_1 \left( \sum_{i=1}^\infty m_i \right)^{\frac{p+4}{8-p}} = I_1.
$$
Once again, this implies that all but one of the $m_i$ are zero, or in other words,
$$
\phi = \pm \Phi_{B,c}.
$$
If $p=4$, we learn from Proposition~\ref{propminH} that the minimizing compacton is such that $B=0$.
\end{proof}

\subsection{Variational analysis for $p \geq 8$}

When $p \geq 8$, we must deal with the lack of compactness that occurs when traditionally trying to minimize energy with respect to fixed mass.  One approach to this is to use the so-called Weinstein functional as introduced in \cite{weinstein1983nonlinear}.   

For simplicity, take $p=8$ to start.  In such a case, the optimization procedure is to maximize a functional built from the Gagliardo-Nirenberg inequality \eqref{L1L2GNineq}.  The functional is of the form
\begin{equation}
W[u] = \frac{\| u \|_{L^8}^8}{\| u \|_{L^2}^\alpha 
\| u \partial_x u \|_{L^2}^\beta},
\end{equation}
with $\alpha = 8$, $\beta = 4$.  The existence proof works nearly identically to the existence arguments in \cite{weinstein1983nonlinear} via scaling arguments with the appropriately modified scalings to deal with the degenerate $H^1$-type norm. For $p > 8$, a similar strategy will work provided one correctly modifies the powers in Gagliardo-Nirenberg inequality from \eqref{L1L2GNineq} to give
\begin{equation}
\label{L2L2GNineq}
\| u \|_{L^{p}}^p \lesssim \| u \|_{L^2}^{\alpha} \| u u_x \|_{L^2}^{\beta}
\end{equation}
for $\alpha = \frac{(p + 4)}{3}$ and $\beta = \frac{(p-2)}3$.  We remark here that in a similar fashion the best constant in the Galgliardo-Nirenberg inequality \eqref{L1L2GNineq} is given by a function of the $L^2$ norm of the ground state solution for each $p$.

One could also propose an alternative constrained minimization given by finding the minimizer of
\begin{equation}
F_\lambda (u) = \| u \partial_x u \|_{L^2}^2 + \lambda \| u \|_{L^2}^2
\end{equation}
such that 
\begin{equation}
\| u \|_{L^p}^p = \beta.
\end{equation}
Solutions of this minimization can be seen to solve the correct ODE equation after a suitable scaling argument is applied.  For a fairly general treatment of these strategies in a general framework for semilinear operators, see for instance \cite{CMMT}.

\section{Solitary waves for~\eqref{degNLS}}

\label{sectiondegenerateNLS}

\subsection{The ODE}
\label{ODENLS}
By definition, traveling waves are solutions of the form
$$
u = Q ( x - vt ) e^{-ict},
$$
with $v,c \in \mathbb{R}$. They satisfy the equation
\begin{equation}
\label{ODEQ}
- c Q + iv Q' + \overline{Q} (QQ')' + |Q|^{p-2} Q = 0.
\end{equation}
For $v=0$, a first class of solutions is obviously given by $Q = \Phi_{B,c}$, defined in the previous section. Our aim, however, is to completely describe finite mass solutions. This is achieved in the following theorem

\begin{thm} Assume $p>2$. For $B>0$, or $B=0$ and $c>0$, define $\theta_{B,c}$ by
$$
\left\{
\begin{array}{l}
\theta_{B,c}(0) = 0 \\
\displaystyle \theta_{B,c}' = - \frac{1}{2 \Phi_{B,c}^2} \quad \mbox{if $x \in (-x_{B,c},x_{B,c})$}
\end{array}
\right.
$$
so that, if $x \to 0$, $\theta_{B,c}(x-x_{B,c}) \sim
\left\{ \begin{array}{ll} - (cx)^{-1} & \mbox{for $B=0,c>0$} \\ (2\sqrt{2B})^{-1} \log|x| & \mbox{for $B>0$} \end{array}. \right.$ Define furthermore
$$
Q_{B,c}^v (x) = 
\left\{
\begin{array}{ll}
\displaystyle \Phi_{B,c} (x) e^{i v \theta_{B,c}(x)} & \mbox{if $x \in (-x_{B,c},x_{B,c})$} \\
0 & \mbox{if $x \notin (-x_{B,c},x_{B,c})$}
\end{array}
\right.
$$
Then $Q^v_{B,c}$ is a finite mass solution of~\eqref{ODEQ} (in the sense of distributions) with
\begin{equation}
\label{MKH}
M(Q^{B,c}_v) = M(\Phi_{B,c}), \quad K(Q^{B,c}_v) = v x_{B,c} \quad \mbox{and} \quad H(Q^{B,c}_v) = H(\Phi_{B,c}) + \frac{v^2}{4}x_{B,c}.
\end{equation}
Furthermore, any finite mass solution of~\eqref{ODEQ} is of the form
$$
Q = \sum_{i=1}^N \epsilon_i Q_{B_i,c}^v (x-x_i)
$$
where $N \in \mathbb{N}$, $\epsilon_i \in \mathbb{C}$ with $|\epsilon_i| =1$, either $B_i >0$ or $c>0$, and $x_i \in \mathbb{R}$ are such that the supports of the $Q_{B_i,c}^v$ are disjoint.
\end{thm}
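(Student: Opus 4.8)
The engine of the proof is the substitution into polar coordinates. On the open set $\{Q\neq0\}$ write $Q=f\,e^{i\psi}$ with $f=|Q|>0$; since the connected components of this set are intervals, $\psi$ is a well-defined smooth real function there. Using $QQ'=\tfrac12(Q^2)'$ and dividing \eqref{ODEQ} by $e^{i\psi}$, the equation splits into its real and imaginary parts,
\begin{align*}
&-cf-vf\psi'+f(ff')'-2f^3(\psi')^2+f^{p-1}=0,\\
&vf'+f(f^2\psi')'+2f^2f'\psi'=0 .
\end{align*}
Multiplying the second line by $f$ collapses it to $\bigl(\tfrac v2 f^2+f^4\psi'\bigr)'=0$, so on each component $I$ of $\{Q\neq0\}$ there is a constant $\kappa_I$ with $\psi'=\kappa_I f^{-4}-\tfrac v2 f^{-2}$. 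Feeding this back into the real part, the $v^2$-terms cancel and one is left with $-cf+f(ff')'+f^{p-1}=2\kappa_I^2 f^{-5}-v\kappa_I f^{-3}$, which, after multiplication by $f'$ and integration, has first integral
$$
(ff')^2=cf^2-\tfrac2p f^p+\frac{v\kappa_I}{f^2}-\frac{\kappa_I^2}{f^4}+D_I .
$$

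\emph{Construction of $Q_{B,c}^v$ and the identities \eqref{MKH}.} Choosing $\kappa_I=0$ reduces the real part exactly to \eqref{theODEA} with $A=0$, i.e. $f=\Phi_{B,c}$, and forces $\psi'=-\tfrac{v}{2\Phi_{B,c}^2}=v\theta_{B,c}'$. Hence $\Phi_{B,c}e^{iv\theta_{B,c}}$ solves \eqref{ODEQ} classically on $(-x_{B,c},x_{B,c})$, and on the support one computes $|Q_{B,c}^v|^2=\Phi_{B,c}^2$, $\operatorname{Im}(\overline Q\,\partial_xQ)=\Phi_{B,c}^2\psi'$ (constant), and $|Q\,\partial_xQ|^2=(\Phi_{B,c}\Phi_{B,c}')^2+\tfrac{v^2}4$; integrating these over $(-x_{B,c},x_{B,c})$ gives \eqref{MKH} at once, with finiteness of $M$ and $H$ following since $(\Phi_{B,c}\Phi_{B,c}')^2=2B+c\Phi_{B,c}^2-\tfrac2p\Phi_{B,c}^p$ stays bounded. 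It then remains to check that the extension by $0$ solves \eqref{ODEQ} distributionally on $\R$; this is the delicate point, discussed below.

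\emph{Classification of finite-mass solutions.} A distributional solution of finite mass has $QQ'=\tfrac12(Q^2)'\in L^2$, so $Q^2\in H^1_{\mathrm{loc}}$ and $Q$ is continuous; on $\{Q\neq0\}$ a bootstrap in \eqref{ODEQ} (divide by $\overline Q$, which is bounded away from $0$ on compacts of $\{Q\neq 0\}$) upgrades $Q$ to $C^\infty$, so the polar analysis applies on every component $I$. The crux is $\kappa_I=0$: if $\kappa_I\neq0$, the first integral forces $(ff')^2\to-\infty$ as $f\to0^+$, so $f$ is bounded below on $I$; then $I$ can have no finite endpoint and $f$ cannot decay at $\pm\infty$, forcing $I=\R$ with $\inf_If>0$ and hence $M(Q)=\infty$, a contradiction. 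With $\kappa_I=0$, $f$ solves \eqref{theODEA} with $A=0$, and since the periodic and half-line profiles of Proposition~\ref{propODE} have infinite mass, $f$ must be a translate of some $\Phi_{B_i,c}$ ($I$ the corresponding bounded interval) and $\psi=v\theta_{B_i,c}(\cdot-x_i)+\gamma_i$, so $Q=\epsilon_iQ_{B_i,c}^v(\cdot-x_i)$ on $I$ with $\epsilon_i=e^{i\gamma_i}$, $|\epsilon_i|=1$. Summing over the (disjoint) components and using $Q\equiv0$ on their complement yields the asserted form, with "$B_i>0$ or $c>0$" precisely the existence condition for $\Phi_{B_i,c}$; when $c>0$ the number $N$ is finite because $M(\Phi_{B_i,c})$ is bounded below by a positive constant depending only on $c$. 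Conversely one checks the displayed sum solves \eqref{ODEQ} in $\mathcal D'(\R)$, using that each summand does, that both sides vanish off the supports, and the endpoint analysis below.

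\emph{Main obstacle.} Everything reduces to showing $Q_{B,c}^v$, extended by $0$, satisfies \eqref{ODEQ} in $\mathcal D'(\R)$, i.e. that $\overline Q(QQ')'-cQ+ivQ'+|Q|^{p-2}Q$, which vanishes on the open support, carries no distribution concentrated at $\pm x_{B,c}$. For $B>0$ this is comparatively easy: $\theta_{B,c}$ is only logarithmically singular at the endpoints, the relevant terms are $O(|x\mp x_{B,c}|^{-1/2})\in L^1_{\mathrm{loc}}$, and the identity follows by integrating by parts on $(-x_{B,c}+\varepsilon,x_{B,c}-\varepsilon)$ and letting $\varepsilon\to0$, the boundary terms vanishing because $|QQ'|$ stays bounded while $Q\to0$ — the same mechanism used in Section~\ref{sect:TravelingWaves} for $\Phi_{B,c}$. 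The genuinely delicate case is $B=0$, $v\neq0$, where $\theta_{0,c}$ blows up like $(cx)^{-1}$: then $Q'$ and $\overline Q(QQ')'$ grow like $|x\mp x_{0,c}|^{-1}$ and are not locally integrable, and one must exploit the rapid oscillation of the phase (so that these expressions are improperly integrable and define distributions with no atom at the endpoint) to conclude. I expect this $B=0$ endpoint verification to be the technical heart of the argument; a secondary point is justifying finiteness of $N$ when $c\le0$, which needs a short separate observation rather than the mass lower bound available for $c>0$.
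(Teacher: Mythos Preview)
Your proof is correct and follows essentially the same polar/Madelung decomposition as the paper: both reduce to the conserved quantity $\tfrac{v}{2}f^2+f^4\psi'=\kappa_I$ (the paper's $\eta$), show it vanishes for finite mass, and then land back on the real compacton ODE. The only notable difference is that the paper dispatches $\eta=0$ more directly---using the second conservation law $-c|Q|^2+|Q|^2|Q'|^2+\tfrac{2}{p}|Q|^p=\kappa$ to bound $|QQ'|$ near a zero of $Q$, whence $f^4\psi'\to0$ immediately---whereas you substitute $\psi'$ back into the real part and pass through the full first integral before reaching the same conclusion; your route works but is a detour. Your flagging of the $B=0$, $v\neq0$ endpoint verification and the finiteness of $N$ for $c\le0$ is well taken: the paper simply asserts the distributional check (``A computation shows\dots'') and does not address the second point at all, so you are being more scrupulous than the original here.
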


\begin{proof} A computation shows that the $Q^v_{B,c}$ are indeed solutions in the sense of distributions. We now prove that they are the only ones.

Multiplying~\eqref{ODEQ} by $\overline{Q}$ and taking the imaginary part, or multiplying the equation by $\overline{Q'}$ and taking the real part leads to the identities
\begin{align*}
& \left[ \frac{v}{2} |Q|^2 + \mathfrak{Im} (|Q|^2 \overline{Q} Q') \right]' = 0 \\
& \left[ -c |Q|^2 + |Q|^2 |Q'|^2 + \frac{2}{p} |Q|^p \right]' = 0.
\end{align*}
Therefore, there exist real constants $\eta$ and $\kappa$ such that, as long as $Q$ does not vanish,
\begin{subequations}
\begin{align}
\label{penguin1} & \frac{v}{2} |Q|^2 + \mathfrak{Im} (|Q|^2 \overline{Q} Q') = \eta \\
\label{penguin2} & -c |Q|^2 + |Q|^2 |Q'|^2 + \frac{2}{p} |Q|^p = \kappa.
\end{align}
\end{subequations}
If the traveling wave is to have finite mass, then necessarily $Q(s) \to 0$ as $s$ approaches some $s_0$ (which might be infinite). Letting $s \to s_0$, we learn from~\eqref{penguin2} that $QQ'$ is bounded. Turning to~\eqref{penguin1}, we learn that
$$
\eta = 0.
$$
To pursue the discussion, we split it into two cases

\bigskip

\noindent \underline{Case 1: $v=0$.} If $v=0$, equation~\eqref{penguin1} implies that $\mathfrak{Im} \overline Q Q' = 0$. This in turn means that $Q$ has a constant phase, so that we can assume, using the symmetries of the equation, that $Q$ is real valued. It solves
$$
c Q = Q(QQ')' + Q^{p-1},
$$
which brings us back to the previous section.

\bigskip

\noindent \underline{Case 2: $c\neq 0$.} If $v \neq 0$, equation~\eqref{penguin1} implies that $\mathfrak{Im} \overline Q Q' = -\frac{v}{2}$, or in other words, adopting the polar form $Q = \psi e^{i\theta}$, that
$$
\theta' = - \frac{v}{2 \psi^2}.
$$
Plugging the ansatz $Q = \psi e^{i \theta}$ into the ODE~\eqref{ODEQ}, leads to
$$
c \psi = \psi (\psi \psi')' + \psi^{p-1},
$$
so that we are back to Case 1.
\end{proof}

\subsection{Variational properties} It is clear that the results in Section~\ref{subsectionvariationaldKdV} extend to the case of complex-valued functions: namely, the complex-valued minimizers of $H(u)$ subject to $M(u)$ constant coincide with the real-valued minimizers.

In analogy with the semilinear case, it would be natural to expect that the $\Phi_{B,c}^v$ appear as the minimizers of
$$
\min H(u) \quad \mbox{subject to $M(u) = M_0$ and $K(u) = K_0$};
$$
but we will see it is not the case. Adopting the polar decomposition $u = \sqrt{\rho} e^{i\theta}$, this becomes.
$$
\min \frac{1}{8} \int |\partial_x \rho|^2 \,dx + \frac{1}{2} \int \rho^2 |\partial_x \theta|^2 \,dx- \frac{1}{p} \int \rho^{p/2}\,dx \quad \mbox{suject to} 
\quad 
\left\{ 
\begin{array}{l} 
\int \rho\,dx =M_0 \\ \int \rho \partial_x \theta = K_0
\end{array}
\right.
$$
A non-compact minimizing sequence can be constructed as follows: consider $\chi$ in $\mathcal{C}^\infty_0$ be radial, with support $(-1,1)$, $\int \chi = 1$, and let $\chi_R = \chi\left( \frac{\cdot}{R} \right)$. Next, let $\zeta_{\epsilon, R}$ solve $\partial_x \zeta_{\epsilon,R} = \frac{K_0}{2 \epsilon^2 R \chi_R}$. Finally, let $\phi$ be a minimizer of $H$ subject to $M=M_0$ and define
$$
u = \phi + \epsilon \sqrt{\chi_R} e^{i \zeta_{\epsilon,R}} [\cdot - 10R],
$$
so that most of the mass and the energy lies in the first summand, while all the momentum is contained in the second, non compact, summand. To be more precise, a small computation reveals that
\begin{align*}
& M(u) = M(\phi) + \epsilon^2 R = M_0 + \epsilon^2 R \\
& K(u) = K_0 \\
& H(u) = H(\phi) + \frac{\epsilon^4}{R} \int |\partial_x \chi|^2 - \frac{\epsilon^p R}{p} \int |\chi|^{p/2} + \frac{K_0^2}{8R}.
\end{align*}
Letting $R \to \infty$ and choosing for instance $\epsilon = R^{-100}$ gives 
a minimizing sequence such that
$$
M(u) \to M_0, \quad K(u) \to K_0, \quad H(u) \to \min_{M=M_0} H.
$$
Due to~\eqref{MKH}, this example shows that $Q_{B,c}^v$ cannot be minimizers; it also illustrates the basic lack of compactness which explains the absence of minimizers.

\subsection{Hydrodynamic formulation} The existence of traveling waves for~\eqref{degNLS} can be read off most easily after switching to hydrodynamic coordinates: taking the Madelung transform $u(t,x) = \sqrt{\rho (x,t)} e^{i \theta(x,t)}$ leads to the equation
\begin{align*}
& \rho_t + 8 \partial_x ( \rho^2 \theta_x) = 0, \\
& \theta_t + 8 \rho \theta_x^2 = 4 \rho_{xx} + 3 \rho.
\end{align*}
Defining $v = 8 \rho \theta_x$ as the designated flow velocity, the equation becomes
\begin{align}
\label{hydro:NLS1}
& \rho_t +  \partial_x ( \rho v) = 0, \\
& v_t + 3 v \partial_x v =  \rho(  \rho_{xxx} + 2 \rho_x). \notag
\end{align}
Traveling waves now correspond to solutions of
$$
\left\{
\begin{array}{l}
v = \operatorname{cst} \\
\rho_{xxx} + 2 \rho_x = 0,
\end{array}
\right.
$$
which provides an alternative (and, in some respects, simpler) proof of the results of Subsection~\ref{ODENLS}.

\bigskip

This can be contrasted with a related model derived by John Hunter \cite[Section $4.1$]{hunter1995asymptotic} that arises as an asymptotic equation for a two-wave system in
a compressible gas dynamics put forward by Majda-Rosales-Schonbek \cite{majda1988canonical}.  The degenerate NLS model is given by
\begin{equation}\label{eq:nls2m}
- iu_t= \partial_x (|u|^2 u_x) , \ \ v: \R \to \C.
\end{equation}
This equation was introduced to the authors during a talk of John Hunter and is now being studied in significant detail by Hunter and graduate student Evan Smothers \cite{HunterPrivate}.

Naively taking the Madelung transformation of \eqref{eq:nls2m}, $v = \sqrt{\rho (x,t)} e^{i \theta(x,t)}$, for this equation, we arrive at
\begin{align*}
& \rho_t +  \partial_x ( \rho u) = 0, \\
& u_t + 2 u \partial_x u + \frac{u^2 (\log \rho)_x}{2} =  \rho \left( \frac12 \frac{\rho_{x}^2}{\rho} +  \rho_{xx} \right)_x
\end{align*}
for $u = 2 \rho \theta_x$.  The right hand side of the $u$ equation no longer quite so clearly supports coherent structures, but might be ideal for the study of shock-like solutions. It is unclear however whether the variational approach which we used for~\eqref{degNLS} will yield useful results. 

\begin{rem}
One might also from Euler systems of this type attempt to derive a weakly dispersive KdV limit as in the study of dispersive shock waves a la Whitham theory.  See for instance the review article of El-Hoefer-Shearer \cite{el2017dispersive}.  
\end{rem}

\section{Linear Stability for Compactons of~\eqref{degKdV} when $p=4$}
\label{sectionlinearstability}

In this section, we study the operator stemming from linearizing \eqref{degKdV} with $p=4$ about solutions
\[
\phi = \phi_{B,c},
\]  
both when $B = 0, c >0$ and $B>0, c = 0, c < 0, c > 0$.  When $p \neq 4$, but $2<p<8$, a similar analysis should follow.  However as $p=4$ is both the power in the original derivation and the most interesting algebraically, we focus on it primarily here. For simplicity of exposition, we will treat the following specific cases:
\begin{enumerate}
\item  $B=0$, $c=1$;
\item  $B = \frac14$: 
\begin{enumerate}
\item $c = 1$;
\item $c = 0$;
\item $c = -1$,
\end{enumerate}
\end{enumerate}
though other values of $B$ and $c$ appropriately related follow with small modifications.

We assume that our initial data is of the form
\[
u_0 = \phi + v_0,
\]
where the perturbation \(v_0\) is sufficiently small, smooth and \(\supp v_0\subseteq \bar I = [-x_{B,c},x_{B,c}]\). For times \(t>0\) we assume that our solution may be written in the form
\[
u(t,x) = \phi\left(x - c t \right) + v\left(t,x - c t\right).
\]
In the moving frame, keeping only the linear terms in $v$ we obtain the equation
\begin{equation}\label{Basic}
\begin{cases} 
v_t = \left( \mathcal{L}_{\phi} v  \right)_x,\vspace{0.1cm}\\
v(0) = v_0,\vspace{0.1cm}\\
v\big|_{\partial I} = 0,\vspace{0.1cm}\\
\mc L_{\phi} v \big|_{x=x_{B,c}}= 0,
\end{cases}
\end{equation}
where linearized operator,
\begin{equation}
\label{Lu}
\mc L_{\phi} = - \phi (\partial_x^2 + 2)\phi,
\end{equation}
can be seen as a singular Sturm-Liouville operator and written in the form
$$
\mathcal{L}_{\phi} = -\phi^2 \partial_x^2 - 2 \phi \phi_x \partial_x - (\phi\phi_{xx} + 2\phi^2).
$$

\begin{rem} The additional boundary condition \( \mc L_{\phi} v \big|_{x=x_{B,c}}= 0\) at the right endpoint is natural from the point of view of KdV equations on bounded intervals, see for example~\cite{MR3582261,MR1998942,MR2254610} and references therein.
\end{rem}

In order to better characterize the behavior of the eigenfunctions of \(\mc L_\phi\) we recall the Sturm comparison and oscillation theorems (see for example \cite[Theorems~9.39, 9.40]{MR3243083}):
\medskip
\begin{thm}[Sturm comparison]
Let \(\lambda_1<\lambda_2\) and \(q_1,q_2\) be solutions to the ODE
\[
\mc L_\phi q_j = \lambda_j q_j,
\]
on some open interval \((a,b)\subseteq I\). Suppose that at each endpoint \(a,b\) either \(W[q_1,q_2] = 0\) or if the endpoint lies in the interior of the interval \(I\) we have \(q_1 = 0\). Then the function \(q_2\) has a zero in the interval \((a,b)\).
\end{thm}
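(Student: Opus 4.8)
The plan is to recognize $\mathcal{L}_\phi$ as a (singular) Sturm--Liouville operator and then run the classical Picone/Wronskian comparison argument. First I would record the identity
\[
\mathcal{L}_\phi q = -(\phi^2 q')' - (\phi\phi_{xx} + 2\phi^2)\,q,
\]
a one-line computation from expanding $-\phi(\partial_x^2+2)(\phi q)$. Hence $\mathcal{L}_\phi q_j = \lambda_j q_j$ is equivalent to the Sturm--Liouville equation $(\phi^2 q_j')' + (\lambda_j + \phi\phi_{xx} + 2\phi^2)\,q_j = 0$, with $p:=\phi^2>0$ on the interior of $I$ and weight $1$. With the generalized Wronskian $W[q_1,q_2]:=\phi^2(q_1 q_2' - q_1' q_2)$ (matching the notation of the statement), differentiating and substituting $(\phi^2 q_j')' = -(\lambda_j + \phi\phi_{xx}+2\phi^2)q_j$ gives
\[
\frac{d}{dx}\,W[q_1,q_2] = (\lambda_1 - \lambda_2)\,q_1 q_2 .
\]

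Next I would argue by contradiction: assume $q_2$ (nontrivial, as is $q_1$) has no zero in $(a,b)$, so after a sign change $q_2>0$ on $(a,b)$. The main tool is Picone's identity, which in the present notation reads
\[
\frac{d}{dx}\!\left(-\frac{q_1}{q_2}\,W[q_1,q_2]\right) = (\lambda_2 - \lambda_1)\,q_1^2 + \phi^2\!\left(q_1' - \frac{q_1 q_2'}{q_2}\right)^{\!2},
\]
valid wherever $q_2\neq 0$; it follows from the Sturm--Liouville form by the standard manipulation (differentiate $\tfrac{q_1}{q_2}\bigl(\phi^2 q_1' q_2 - \phi^2 q_1 q_2'\bigr)$ and use the two equations). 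Since $\lambda_2>\lambda_1$, the right-hand side is nonnegative and strictly positive wherever $q_1\neq 0$, so its integral over $(a,b)$ is strictly positive because $q_1\not\equiv 0$. Integrating over $(a,b)$ (as an improper integral, with limits taken at the endpoints),
\[
\left[-\frac{q_1}{q_2}\,W[q_1,q_2]\right]_a^b = \int_a^b \left[(\lambda_2 - \lambda_1) q_1^2 + \phi^2\!\left(q_1' - \frac{q_1 q_2'}{q_2}\right)^{\!2}\right] dx > 0 ,
\]
and it remains to see the left-hand boundary term vanishes. This is exactly where the endpoint hypotheses enter: if $W[q_1,q_2]=0$ at an endpoint and $q_2\neq 0$ there, the term is plainly $0$; if $q_2$ also vanishes there, then at an interior point of $I$ (regular ODE) both $q_1,q_2$ vanish to first order, $W[q_1,q_2]$ still vanishes, and $q_1/q_2$ has a finite limit, so the product is $0$; and if the endpoint is interior to $I$ with $q_1=0$ there, the same reasoning applies. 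Thus the left side is $0$, contradicting the strict positivity on the right, so $q_2$ must vanish somewhere in $(a,b)$.

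The two algebraic identities are routine; the point needing care is the behavior at an endpoint lying on $\partial I$, where $\phi$ degenerates and $\mathcal{L}_\phi$ is a genuinely singular Sturm--Liouville operator — one must know that for the eigenfunction-type solutions under consideration the generalized Wronskian $W[q_1,q_2]$ actually tends to $0$ there, which is precisely the hypothesis $W[q_1,q_2]=0$. (In the special case where $a,b$ are interior and are consecutive zeros of $q_1$, one can avoid Picone entirely: $q_1$ has constant sign on $(a,b)$, so $W[q_1,q_2]$ is strictly monotone there by the Wronskian identity, while $q_1(a)=q_1(b)=0$ together with $q_2>0$ force $W[q_1,q_2](a)\le 0\le W[q_1,q_2](b)$, a contradiction unless $q_2$ has a zero inside.)
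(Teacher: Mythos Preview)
The paper does not actually prove this theorem: it is stated as a recalled result with a citation to Teschl's book, so there is no ``paper's own proof'' to compare against. Your Picone/Wronskian argument is the standard route and is essentially correct; the Sturm--Liouville reformulation $\mathcal{L}_\phi q = -(\phi^2 q')' - (\phi\phi_{xx}+2\phi^2)q$, the Wronskian identity $W' = (\lambda_1-\lambda_2)q_1q_2$, and Picone's identity are all right, and your modified Wronskian $\phi^2(q_1q_2'-q_1'q_2)$ agrees with the paper's $(\phi q_1)(\phi q_2)_x - (\phi q_2)(\phi q_1)_x$.

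One small point deserves more care than you give it. At a singular endpoint on $\partial I$ the hypothesis is only that $W[q_1,q_2]\to 0$, but what Picone needs is $\tfrac{q_1}{q_2}W[q_1,q_2]\to 0$; these are not equivalent a priori if $q_1/q_2$ is unbounded there. You flag the singular endpoint as ``the point needing care'' but then effectively assume the stronger conclusion. To close this you would want to invoke the Frobenius-type asymptotics for solutions near $\partial I$ (as the paper does for $B=0$, and analogously for $B>0$ where $\phi^2\sim\mathbf d$ gives indicial exponent $0$ and bounded eigenfunction-class solutions), so that $q_1/q_2$ stays bounded when $q_2$ is an admissible solution not vanishing at the endpoint; alternatively, one can bypass Picone at the singular end and use the monotonicity of $-\tfrac{q_1}{q_2}W$ together with the Weyl limit-point characterization, which is how such results are typically handled in the singular Sturm--Liouville literature that the paper cites.
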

\medskip

\begin{rem}
From standard ODE theory \(q_1,q_2\) are smooth on the open interval \(I\) and hence the Wronskian is well defined on \(I\). If \(a = - x_{B,c}\) then we define
\[
W[q_1,q_2](a) = \lim\limits_{x\downarrow a}W[q_1,q_2](x),
\]
provided such a limit exists. A similar definition holds when \(b = x_{B,c}\). A consequence of \(\mc L_\phi\) being limit point is that for all eigenfunctions \(q_1,q_2\) of \(\mc L_\phi\) this limit exists and is equal to zero,
\[
W[q_1,q_2](\pm x_{B,c}) = 0.
\]
\end{rem}

\medskip
\begin{thm}[Sturm oscillation]
If \(\mc L_\phi\) has eigenvalues \(\lambda_0<\lambda_1<\dots\) with corresponding eigenfunctions \(q_0,q_1,\dots\) then \(q_j\) has exactly \(j\) zeros in the interval \(I\).
\end{thm}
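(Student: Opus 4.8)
The plan is to prove separately that $q_j$ has \emph{at most} $j$ zeros in $I=(-x_{B,c},x_{B,c})$ and \emph{at least} $j$ zeros, and then combine. It is convenient first to record the variational setup: using \eqref{theODEA} (with $A=0$, $p=4$) to compute $-(\phi\phi''+2\phi^2)=(\phi')^2-\phi^2-c$, one sees that $\mc L_\phi$ is the self-adjoint Sturm--Liouville realization associated with the quadratic form $\mathcal Q(u)=\int_I\big(\phi^2|u'|^2+((\phi')^2-\phi^2-c)|u|^2\big)\,dx$, whose potential $(\phi')^2-\phi^2-c$ has at worst a favorable (nonnegative, inverse-square-type) singularity at $\pm x_{B,c}$ coming from $(\phi')^2$; hence $\mathcal Q$ is bounded below with purely discrete spectrum, and the $\lambda_j$ obey the Courant--Fischer min-max principle $\lambda_j=\min_{\dim V=j+1}\max_{0\neq u\in V}\mathcal Q(u)/\|u\|_{L^2}^2$. (Equivalently, $q$ solves $\mc L_\phi q=\lambda q$ iff $w:=\phi q$ solves $w''+(2+\lambda/\phi^2)w=0$, and $w$ and $q$ have the same zeros on $I$; this Schr\"odinger form may make some of the steps below more transparent.)

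For the upper bound I would argue by contradiction: if $q_j$ had at least $j+1$ zeros, pick $j+1$ of them, $z_1<\dots<z_{j+1}$, splitting $I$ into $j+2$ open subintervals $J_1,\dots,J_{j+2}$, and let $w_i$ be the restriction of $q_j$ to $J_i$ extended by zero. By ODE uniqueness $q_j\not\equiv 0$ on any $J_i$, so the $w_i$ are nonzero; they have disjoint supports, hence are $L^2$-orthogonal; each lies in the form domain since $q_j$ has finite form energy; and integrating $\mc L_\phi q_j=\lambda_j q_j$ by parts over $J_i$---the boundary terms vanishing at the $z_i$ (where $w_i=0$) and at $\pm x_{B,c}$ (by the limit-point property)---gives $\mathcal Q(w_i)=\lambda_j\|w_i\|_{L^2}^2$. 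Then every nonzero element of $\Span\{w_1,\dots,w_{j+2}\}$ has Rayleigh quotient $\lambda_j$, so $\lambda_{j+1}\leq\lambda_j$, contradicting $\lambda_j<\lambda_{j+1}$. In particular $q_0$ has no zeros.

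For the lower bound I would induct on $j$, the case $j=0$ being vacuous. If $q_{j-1}$ has zeros $y_1<\dots<y_m$ with $m\geq j-1$, these divide $I$ into $m+1\geq j$ open subintervals; on each such subinterval $(a,b)$ the Sturm comparison theorem applied to the pair $(q_{j-1},q_j)$ is legitimate---since $\lambda_{j-1}<\lambda_j$, since $q_{j-1}$ vanishes at any endpoint of $(a,b)$ interior to $I$, and since $W[q_{j-1},q_j](\pm x_{B,c})=0$ by the remark following the comparison theorem---and it produces a zero of $q_j$ in $(a,b)$. As these zeros sit in disjoint subintervals, $q_j$ has at least $m+1\geq j$ zeros. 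Combining with the upper bound forces $q_j$ to have exactly $j$ zeros (and $q_{j-1}$ exactly $j-1$), closing the induction.

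I expect the main difficulty to be the upper bound, or rather making the variational argument rigorous for the \emph{singular} operator $\mc L_\phi$: one must confirm that $\mathcal Q$ is semibounded with discrete spectrum and, crucially, that the nodal cutoffs $w_i$ really are admissible test functions achieving the Rayleigh quotient $\lambda_j$---that is, that the integration-by-parts boundary contributions at the limit-point endpoints $\pm x_{B,c}$ vanish, which is where the limit-point property (equivalently, the vanishing of the relevant Wronskians) re-enters and requires the endpoint asymptotics of $\phi$ from Proposition~\ref{propODE} together with a local analysis of the eigenfunctions near $\pm x_{B,c}$. An alternative bypassing the variational principle is a Pr\"ufer-angle argument: writing $q=r\sin\psi$, $\phi^2q'=r\cos\psi$, one finds $\psi'=\phi^{-2}\cos^2\psi+(\lambda+c+\phi^2-(\phi')^2)\sin^2\psi$, which is $>0$ wherever $q=0$, together with $\partial_\lambda\psi>0$; counting the zeros of $q_j$ through the total increase of $\psi$ across $I$ then gives both inequalities at once, at the cost of setting up the (mildly singular) Pr\"ufer transformation near the endpoints.
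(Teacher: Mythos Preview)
The paper does not prove this theorem at all: it is quoted, together with the Sturm comparison theorem, as a classical result with a reference to Teschl's textbook (Theorems~9.39 and~9.40 in~\cite{MR3243083}). Your proposal is essentially the standard textbook proof---the upper bound via min--max and nodal cutoffs, the lower bound by induction using Sturm comparison between consecutive eigenfunctions---and it is correct in outline. You also correctly flag the genuine technical point, namely that the boundary contributions at the singular endpoints $\pm x_{B,c}$ vanish; in the paper's setting this is handled by the observation (used in the proof of Lemma~\ref{lem:Coercive}) that eigenfunctions $q$ in the Friedrichs domain satisfy $\phi q\in H^1_0(I)$, together with the limit-point property.

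One small over-claim to correct: you assert that $\mathcal Q$ has \emph{purely discrete spectrum}, but this fails for the compacton $\phi_{0,1}$, where Proposition~\ref{prop:phi01} shows $\mc L_\phi$ has continuous spectrum $[\tfrac14,\infty)$. This does not break your argument---the min--max principle still characterizes the eigenvalues below the bottom of the essential spectrum, and the contradiction $\lambda_{j+1}\le\lambda_j$ goes through with $\lambda_{j+1}$ interpreted as the next point of the spectrum (eigenvalue or $\inf\sigma_{\mathrm{ess}}$)---but the claim as written is incorrect and should be weakened accordingly.
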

\medskip

We consider \(\mc L_{\phi}\) to be a symmetric unbounded operator on \(L^2(I)\) with domain \(C^\infty_0(I)\). For \(w\in C^\infty_0(I)\) we may integrate by parts to obtain
\[
\<\mc L_{\phi} w,w\> = \|(\phi w)_x\|_{L^2}^2 - 2\|\phi w\|_{L^2}^2 \geq - 2\|\phi\|_{L^\infty}^2\|w\|_{L^2}^2.
\]
We may then associate \(\mc L_{\phi}\) with the corresponding Friedrichs extension.

\subsection{The case $B=0$, $c=1$}
We first consider the setting $\phi = \phi_{0,1}$, where we recall the explicit expression
\[
\phi_{0,1}(x) = \sqrt 2\cos(\frac x{\sqrt 2}),\qquad x\in I = (-\frac\pi{\sqrt2},\frac\pi{\sqrt 2}).
\]
We then have the following properties of the operator \(\mc L_{\phi_{0,1}}\):
\medskip
\begin{prop}\label{prop:phi01}
The operator \(\mc L_{\phi_{0,1}}\) is limit point and satisfies the following:
\begin{enumerate}
\item The ground state energy is \(\lambda_0 = -2\) with positive ground state \(\phi\).
\item The first excited eigenvalue is \(\lambda_1 = 0\) with corresponding eigenfunction \(\phi_x\).
\item The operator has continuous spectrum \([\frac14,\infty)\).
\end{enumerate}
\end{prop}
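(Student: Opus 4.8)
The plan is to treat the three assertions in turn, throughout writing $\phi=\phi_{0,1}$ and $\rho=\phi^2=1+\cos(\sqrt 2\,x)$, so that the profile equation for $p=4$, $A=0$, $c=1$ from Section~\ref{subsectionexplicit} reads $\rho''+2\rho=2$. I would first record the explicit input: since $\mc L_\phi q=-\phi(\partial_x^2+2)(\phi q)$ and $\phi\cdot\phi=\rho$, $\phi\cdot\phi_x=\tfrac12\rho'$, the identity $\rho''+2\rho=2$ gives immediately $\mc L_\phi\phi=-\phi(\partial_x^2+2)\rho=-2\phi$ and $\mc L_\phi\phi_x=-\tfrac12\phi\,\partial_x(\rho''+2\rho)=0$, and both $\phi$ and $\phi_x$ are bounded, vanish linearly at $\pm x_{0,1}$, hence lie in $L^2(I)$ with image in $L^2(I)$. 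For the limit point property I would solve $\mc L_\phi q=0$ on the interior, where it is equivalent to $(\partial_x^2+2)(\phi q)=0$, so $\phi q=a\cos(\sqrt 2\,x)+b\sin(\sqrt 2\,x)$; as $x\to\pm x_{0,1}$ one has $\phi(x)\sim x_{0,1}-|x|$ while $a\cos(\sqrt 2\,x)+b\sin(\sqrt 2\,x)\to-a$, so any solution with $a\neq 0$ blows up like $(x_{0,1}-|x|)^{-1}$ near the endpoints and is not square integrable there. Thus at most one solution of $\mc L_\phi q=0$ is square integrable near each endpoint, i.e. $\mc L_\phi$ is limit point at both endpoints (and by Weyl's theorem this alternative is independent of the spectral parameter, so testing at $0$ suffices); since the zeroth-order coefficient is bounded, $\mc L_\phi$ is essentially self-adjoint on $C_0^\infty(I)$, its unique self-adjoint extension is the Friedrichs extension, and $\ker\mc L_\phi$ is spanned by $\phi_x$.

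For the continuous spectrum I would straighten the degeneracy. The value $\tfrac14$ is the sharp constant in the one-dimensional Hardy inequality $\int_0^\infty|f'|^2\,dy\ge\tfrac14\int_0^\infty y^{-2}|f|^2\,dy$, and it enters precisely because $\phi^2$ vanishes \emph{quadratically} at $\pm x_{0,1}$. Concretely: $q\mapsto\phi q$ is an isometry of $L^2(I,dx)$ onto $L^2(I,\phi^{-2}dx)$ conjugating $\mc L_\phi$ to $w\mapsto-\phi^2(\partial_x^2+2)w$; the substitution $t=\int_0^x\phi(\xi)^{-2}\,d\xi$ is, because $\phi^{-2}$ fails to be integrable at $\pm x_{0,1}$, a diffeomorphism of $I$ onto $\R$ and conjugates this to the Sturm--Liouville operator $-\partial_t(\phi^{-2}\partial_t\,\cdot)-2\phi^2$ on $L^2(\R,dt)$ (with $\phi$ now read as a function of $t$); finally the Liouville transformation $s'(t)=\phi(t)$, $\widehat w=\phi^{-1/2}\widetilde w$, produces the Schr\"odinger operator $-\partial_s^2+\widehat V$ on $L^2(\R,ds)$ with $\widehat V=\tfrac14(\partial_s\log\phi)^2-\tfrac12\partial_s^2\log\phi-2\phi^2$. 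Using $\phi^2\sim(x_{0,1}-|x|)^2$ one checks $\phi\sim e^{-|s|}$ as $|s|\to\infty$, so $\widehat V$ is smooth, bounded, and $\widehat V(s)\to\tfrac14$ at both ends; the standard theory of one-dimensional Schr\"odinger operators with potential tending to a constant then gives $\sigma_{\mathrm{ess}}(-\partial_s^2+\widehat V)=[\tfrac14,\infty)$, purely continuous, with only discrete spectrum below $\tfrac14$. Transporting back, $\mc L_\phi$ has continuous spectrum $[\tfrac14,\infty)$ and discrete spectrum below it.

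It then remains to identify the two lowest eigenvalues, which follows from the oscillation theory already quoted. The eigenfunction $\phi$ is strictly positive on $I$, hence is the ground state, so $\lambda_0=-2$; the eigenfunction $\phi_x=-\sin(x/\sqrt 2)$ has exactly one zero in $I$, so by the Sturm oscillation theorem it is the first excited state, whence $\lambda_1=0$ and there is no eigenvalue in $(-2,0)$. The step I expect to be the main obstacle is the middle one: carefully verifying that $x\mapsto t(x)$ is onto all of $\R$ and that the Liouville-transformed potential genuinely has limit $\tfrac14$ at $\pm\infty$ (equivalently, if one prefers to avoid the explicit transformation, showing $[\tfrac14,\infty)\subseteq\sigma_{\mathrm{ess}}(\mc L_\phi)$ by constructing singular Weyl sequences for each $\mu\ge\tfrac14$ from cut-off near-extremizers of the Hardy inequality localized at $\pm x_{0,1}$, and $\sigma_{\mathrm{ess}}(\mc L_\phi)\cap(-\infty,\tfrac14)=\emptyset$ by combining Persson's characterization of $\inf\sigma_{\mathrm{ess}}$ with that same Hardy inequality). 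Everything else is routine.
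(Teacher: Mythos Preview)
Your approach is correct and essentially the same as the paper's: both reduce $\mc L_\phi$ to a Schr\"odinger operator on $\R$ via the change of variable $s=\int_0^x\phi^{-1}\,d\xi$ together with conjugation by $\phi^{-1/2}$, arriving at $-\partial_s^2+\tfrac14-\tfrac{15}8\phi^2$ (you route through an intermediate variable $t=\int_0^x\phi^{-2}\,d\xi$, but since $ds/dt=\phi$ you land in exactly the same place), and both then invoke Sturm oscillation for the discrete eigenvalues; your Hardy-inequality remark is the counterpart of the paper's alternative Frobenius analysis of the indicial equation $\alpha^2+\alpha+\lambda=0$. One small slip to fix: $\phi_x=-\sin(x/\sqrt 2)$ does \emph{not} vanish at $\pm x_{0,1}$ (it equals $\mp 1$ there), though this is harmless since boundedness on a bounded interval already gives $\phi_x\in L^2(I)$.
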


\medskip
First, let us observe that $\mathcal{L}_\phi$ is limit point. Indeed, $\mathcal{L}_\phi u =0$  is equivalent, upon setting $w = \phi u$, to $(\partial_x^2 + 2)w = 0$. Therefore, the general solution reads $u = \frac{a \cos(\sqrt 2 x) + b \sin(\sqrt{2} x)}{\phi}$; for $a = 1$, $b=0$, this solution is not square integrable at either endpoint so by the Weyl alternative the operator \(\mc L_\phi\) is limit point at \(x = \pm \frac\pi{\sqrt 2}\) and the Friedrichs extension is the unique self-adjoint extension of \(\mc L_\phi\) (see for example \cite[Theorems~9.6,~9.9]{MR3243083}).

Under the (invertible) transformation
\begin{equation}
t(x) = -\int_{0}^x \phi^{-1} (s) ds,
\end{equation}
we can translate the operator \(\mc L_\phi\) to standard elliptic problem on all of $\mathbb{R}$. To see this, we will reformulate \eqref{Lu} as a simple $b$-operator on $I$.  This is the strategy of the standard $b$-calculus as developed by Melrose and many others, see e.g. \cite{bcalc1,bcalc2,bcalc3,bcalc5,bcalc4}.

We easily observe that we have
\[
\partial_t = \phi \partial_x,
\]
and as a result
\[
\phi \partial_x^2 \phi = \phi^2 \partial_x^2 + 2 \phi \phi_x \partial_x + \phi \phi_{xx}  =  \partial_t^2 + \phi_x (x(t)) \partial_t + V (t),
\]
where
\[
V (t) = \phi (x(t) )  \phi_{xx} (x(t)) = - \frac12 \phi(x(t))^2.
\]
From the asymptotics of $\phi$ near $\pm \frac\pi{\sqrt 2}$, we recognize that $|V (t)| \sim e^{- C|t|}$ as $t \to \pm \infty$. Conjugating by the integrating factor
\[
g(t) = e^{-\frac12 \int_0^t \phi_x (x(s))\ ds },
\]
we arrive at the simple elliptic operator
\begin{equation}
\label{linsmoothcomp_bcalc}
\mathcal{L}_b = -\partial_t^2 + \frac14 + \frac{15}4V(t),
\end{equation}
which must then have the same spectrum as the operator \(\mc L_\phi\).

The operator $\mathcal{L}_{b}$ is thus seen to be a relatively compact perturbation of $-\partial_t^2 + \frac14 $ and hence by Weyl's Theorem has continuous spectrum $\sigma_c ( \mathcal{L}_{b} ) = [\frac14 , \infty)$. With regards to the discrete spectrum, note that $\mathcal{L}_\phi \phi = -2 \phi $ and $\phi (x(t))$ is a nice $L^2$ function in the $t$ variables, in fact, it is exponentially decaying. Plugging in $w = g^{-1} \phi$, we observe that $\mathcal{L}_b $ has a negative eigenvalue at $\lambda = - 2$. We also have $\mathcal{L}_\phi \phi_x = 0 $, which turns into an eigenvalue for $\mathcal{L}_{b}$ also at $\lambda = 0$ given that $g^{-1} \phi_x$ is exponentially decaying as $t \to \pm \infty$.  By Sturm oscillation theory for elliptic operators, we can see that there are no eigenvalues between $\lambda = -2$ and $\lambda  = 0$.  

This is sufficient to set up elliptic estimates for $\mathcal{L}_b$ and do the finite time modulation theory proposed in this work. For future work on long and/or global time scales, it is important to understand dispersive estimates for the linearized operator.  In such a case, we will need to potentially rule out a resonance at the endpoint of the continuous spectrum $\lambda = \frac14$, see \cite{schlag2007dispersive}.

\subsubsection{Spectral Theory without the Nonlinear Transformation}

It is possible to determine the spectrum without changing coordinates: first notice that, due to the ODE satisfied by $\phi$,
$$
\mathcal{L}_\phi \phi = -2 \phi \quad \mbox{and} \quad \mathcal{L}_\phi \phi_x = 0.
$$
Since $\phi_x$ vanishes once on $I$, we deduce, by the Sturm oscillation theorem~\cite{MR3243083}, that $\phi_x$ is the first excited mode.

To determine the behavior at the endpoints of I of a solution of $\mathcal{L}_\phi u = \lambda u$, which can also be written
$$
\left[ - \phi^2 \partial_x^2 - 2 \phi \phi_x \partial_x - \frac{3}{2} \phi^2 \right] u = \lambda u.
$$
We now apply Frobenius' method~\cite{MR2961944}. Since both endpoints are symmetrical, it suffices to consider the left endpoint $-\frac{\pi}{\sqrt 2}$. 
Switching variable to $z = x + \frac{\pi}{\sqrt 2}$, observe that $\phi(z - \frac{\pi}{\sqrt 2}) \sim z$. Only the top orders of the expansion of the coefficients matters for Frobenius' method, so that we can consider the equation
$$
- z^2 \partial_z^2 u - 2 z \partial_z u = \lambda u.
$$
The indicial equation (obtained by plugging $u(z) = z^\alpha$ in the above) reads $\alpha^2 + \alpha + \lambda = 0$, leading to the characteristic exponents
$$
\alpha_{\pm} = \frac{-1 \pm \sqrt{1 - 4 \lambda} }{2}.
$$
Frobenius' method gives a basis of solutions behaving $\sim z^{\alpha_{\pm}}$ as $z \to 0$. Observe that this leads to an infinite number of oscillations for $\lambda > \frac{1}{4}$. Thus, the continuous spectrum is $[\frac{1}{4},\infty)$, see \cite[Page 220]{weidmann2006spectral}.

\subsubsection{The solution operator for $\mc L_\phi q = f$ in general for $B=0$, $c=1$}

A calculation shows that linearly independent solutions to $\mc L_\phi q = 0$ now correspond to $q_1 = \phi_x$ and $q_2 = \phi - \phi^{-1}$.  The function $q_1 \in L^2$ since $\phi_x \to \mp 1$ as $x \to \pm x_{1,0}$.  The function $q_2 \notin L^2$ in either direction.  Note, the modified Wronskian satisfies 
\begin{equation}
\label{Wron_B0_c1}
W_\phi (q_1,q_2) = (\phi q_1)_x (\phi q_2) - (\phi q_2)_x (\phi q_1) = \frac12.
\end{equation}
If \(f\in C^\infty_0(I)\), the general solution \(w \in L^2(I)\) is given by
\begin{equation}\label{VarPar_B0_c1}
\begin{aligned}
\mc L_\phi^{-1}f(x) &= q_1(x)\left(2 \int_{-x_{0,1}}^x f(y) q_2 (y)\,dy - 2 \int_x^{x_{0,1}}f(y) q_2 (y)\,dy   \right)\\
&\quad + q_2 (x)\left( 2 \int_{-x_{0,1}}^xf(y)q_1(y)\,dy - 2 \int_x^{x_{0,1}}f(y)q_1(y)\,dy \right) \\ 
&\quad + C \phi_x,
\end{aligned}
\end{equation}
where \(C\in \R\).

To construct a semigroup for \( e^{t \partial_x \mc L_\phi} \) below for \(B = 0\), \(c=1\), we need that \(\mc L_\phi^{-1}\colon L^2_\perp\rightarrow L^2_\perp\) is a reasonable operator where
\[
L^2_\perp = \{w\in L^2:\<w,\phi\> = 0 = \<w,\phi_x\>\}.
\]
We thus collect the following result:

\medskip
\begin{lem}\label{lem:HS2_B0_c1}
The operator \(\mc L_\phi^{-1}\colon L^2_\perp\rightarrow L^2_\perp\) is continuous where
\[
L^2_\perp = \{w\in L^2:\<w,\phi\> = 0 = \<w,\phi_x\>\}.
\]
\end{lem}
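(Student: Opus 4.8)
The plan is to show that $\mc L_\phi^{-1}$ maps $L^2_\perp$ boundedly to itself by analyzing the explicit variation-of-parameters formula \eqref{VarPar_B0_c1}. The key structural features are: $q_1 = \phi_x$ is bounded on $I$ (with $q_1 \to \mp 1$ at the endpoints), while $q_2 = \phi - \phi^{-1}$ blows up like $\phi^{-1} \sim z^{-1}$ as one approaches each endpoint $\pm x_{0,1}$ (writing $z$ for the distance to the endpoint, so $\phi \sim z$ by Proposition~\ref{propODE}). For $f \in L^2_\perp$ the formula produces $\mc L_\phi^{-1} f$ as a sum of three terms; the term $C\phi_x \in L^2$ is harmless, and the term $q_1(x)(\cdots)$ involves the integral $\int f q_2$, which converges absolutely since near an endpoint $|f\, q_2| \lesssim |f|\, z^{-1}$ and $z^{-1} \in L^2$ on a neighborhood of the endpoint — wait, that product is only $L^1$ if $|f| \lesssim z^{-1+\epsilon}$, so instead one uses Cauchy–Schwarz: $\|q_2\|_{L^2(I)} = \infty$, so absolute convergence of $\int f q_2$ is \emph{not} automatic and this is precisely where the orthogonality constraint on $f$ must be used. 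The real content is that the conditions $\langle f,\phi\rangle = \langle f,\phi_x\rangle = 0$ allow one to rewrite the divergent integrals against $q_2$ as convergent ones.

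Concretely, I would proceed as follows. First, observe $q_2 = \phi - \phi^{-1}$ differs from $-\phi^{-1}$ by the bounded function $\phi$, so integrals of $f$ against $q_2$ reduce (modulo convergent pieces, since $f \in L^2 \subset L^1$ on the bounded interval $I$) to integrals against $\phi^{-1}$. Next, near the right endpoint write $\int_x^{x_{0,1}} f\, \phi^{-1}$ and integrate by parts using a primitive: since $\langle f, \phi\rangle = 0$ and $\langle f,\phi_x\rangle = 0$, the primitive $G(x) = \int_{-x_{0,1}}^x f(y)\,dy$ satisfies $G(\pm x_{0,1}) = 0$ and moreover $\int G \phi_x = -\langle f,\phi\rangle = 0$ (after an integration by parts), so $G$ is orthogonal to $\phi_x$ and in particular has enough vanishing at the endpoints to control $\int G(y)\, \partial_y(\phi^{-1})\,dy = \int G(y)\, (-\phi^{-2}\phi_x)(y)\,dy$; here $\phi^{-2}\phi_x \sim z^{-2}$, but $|G(y)| \lesssim \|f\|_{L^2}\, z^{1/2}$ by Cauchy–Schwarz, which is not enough, so one iterates: a second integration by parts bringing in the double primitive and the condition $\langle f, \phi_x \rangle = 0$ gains the extra power. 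Then $x \mapsto q_1(x)\cdot(\text{combination of these convergent integrals})$ is bounded on $I$, hence in $L^2(I)$, with norm $\lesssim \|f\|_{L^2}$. Finally, for the term $q_2(x)\,(2\int_{-x_{0,1}}^x f q_1 - 2\int_x^{x_{0,1}} f q_1)$: since $q_1 \in L^\infty$, the bracket $B(x)$ is bounded and, more importantly, continuous with $B(-x_{0,1}) = -2\langle f, q_1\rangle = 2\langle f, \phi_x\rangle \cdot(-1)$ — which vanishes by hypothesis — and similarly $B(x_{0,1}) = 2\langle f, q_1\rangle = 0$; so $B$ vanishes at \emph{both} endpoints. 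A quantitative version ($|B(x)| \lesssim \|f\|_{L^2}\, \min(z_-, z_+)^{1/2}$ where $z_\pm$ is the distance to the nearer endpoint) then combines with $|q_2(x)| \lesssim z^{-1}$ to give $|q_2(x) B(x)| \lesssim \|f\|_{L^2}\, z^{-1/2}$, which is square integrable on $I$. Summing the three contributions gives $\|\mc L_\phi^{-1} f\|_{L^2} \lesssim \|f\|_{L^2}$, and a density argument from $C^\infty_0(I)$ to $L^2_\perp$ extends the bound; that $\mc L_\phi^{-1} f$ again lies in $L^2_\perp$ follows from the self-adjointness of the Friedrichs extension and the fact that $\phi,\phi_x$ are the kernel elements (one checks $\langle \mc L_\phi^{-1} f, \phi\rangle$ and $\langle \mc L_\phi^{-1} f, \phi_x\rangle$ vanish directly from the formula, or invokes that the range of a self-adjoint operator is orthogonal to its kernel).

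The main obstacle is the borderline nature of the endpoint singularities: $q_2 \sim z^{-1}$ and the natural bound $|G(y)| \lesssim z^{1/2}$ from Cauchy–Schwarz fall exactly one half-power short of absolute integrability, so the proof genuinely requires extracting \emph{both} orthogonality conditions — one to kill the boundary term in the first integration by parts and one to kill it (or gain a power) in the second — and one must be careful that these two conditions, together with $f \in L^2$, suffice and are used sharply. A cleaner alternative, which I would pursue in parallel, is to bypass the explicit formula: use the $b$-calculus change of variables $t(x) = -\int_0^x \phi^{-1}$ of \eqref{linsmoothcomp_bcalc} to conjugate $\mc L_\phi$ to the honest Schrödinger operator $\mc L_b = -\partial_t^2 + \tfrac14 + \tfrac{15}{4}V(t)$ on $\R$ with $V$ exponentially decaying; since $0$ and $\tfrac14$ are respectively an eigenvalue and the bottom of the continuous spectrum of $\mc L_b$, the inverse of $\mc L_b$ restricted to the orthogonal complement of the two eigenfunctions (images of $\phi$ and $\phi_x$) is bounded on that subspace by the spectral theorem provided there is a spectral gap, i.e. provided $\tfrac14 > 0$ is strictly above the eigenvalue $0$, which it is; transporting this back through the (bounded, with bounded inverse on the relevant weighted spaces) conjugation gives the lemma. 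The delicate point in this second route is checking that the weighted $L^2$ spaces induced by the change of variables $dx = \phi\, dt$ and the integrating factor $g(t)$ are comparable to $L^2(I)$ on the orthogonal complement — which again comes down to the endpoint asymptotics of $\phi$ — but this is softer than the hands-on estimate above, so I expect the final writeup to use the $b$-calculus formulation with the explicit formula \eqref{VarPar_B0_c1} recorded for later use.
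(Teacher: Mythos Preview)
Your second route is exactly the paper's proof, which is a two-sentence appeal to the spectral theory already established in Proposition~\ref{prop:phi01}: on $L^2_\perp$ the operator $\mc L_\phi$ has spectrum contained in $[\tfrac14,\infty)$, hence is bounded below by $\tfrac14$, so $\mc L_\phi^{-1}$ is bounded with norm at most $4$. Your worry about matching weighted $L^2$ spaces through the $b$-calculus conjugation is unnecessary here: the spectral theorem is applied directly to the self-adjoint Friedrichs extension of $\mc L_\phi$ on $L^2(I)$, and the conjugation to $\mc L_b$ was only used earlier to \emph{identify} the spectrum, not to define the inverse.

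Your first route --- the hands-on estimate via the variation-of-parameters formula~\eqref{VarPar_B0_c1} --- is precisely the alternative the paper alludes to in its last sentence but does not carry out. Your sketch is on the right track (the two orthogonality conditions are indeed what make the $q_2$-integrals converge and what force the $q_2(x)B(x)$ term to be square-integrable), but as you yourself note, the endpoint powers are borderline and the iterated integration-by-parts is not fully closed in what you wrote. Given that the spectral-theorem argument is immediate once Proposition~\ref{prop:phi01} is in hand, there is no need to pursue the explicit route for this lemma; if you want to record it, it would be cleaner to state it as a remark and defer the details.
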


\begin{proof}
This follows immediately from the spectral theory above.  In particular, $\mc L_\phi$ is bounded away from $0$ from below on $L^2_\perp$, meaning that $\mc L_\phi^{-1}$ is a bounded operator on this space and hence continuous.  

One can also prove this using directly the form of \eqref{VarPar_B0_c1}. 
\end{proof}
\medskip

\subsection{The case $B=\frac14$}~
We now turn to the case \(\phi = \phi_{\frac14,c}\) and recall the explicit expression,
\[
\phi_{\frac14,c} = \sqrt{c + \sqrt{1 + c^2}\cos(\sqrt 2x)},\qquad x\in I = (-x_{\frac14,c},x_{\frac14,c}), \qquad \tan(\sqrt 2 x_{\frac14,c}) = -\frac1c.
\]
We summarize the properties of the operator \(\mc L_{\phi_{\frac14,c}}\) as follows:
\medskip
\begin{prop}\label{prop:LinearOpProps}
For \(c = \pm 1,0\) the operator \(\mc L_\phi\) satisfies the following:
\begin{enumerate}
\item The operator \(\mc L_\phi\) limit point at \(x = \pm x_{\frac14,c}\)
\item The operator has discrete spectrum and there exists an orthogonal basis of \(L^2(I)\) consisting of simple eigenfunctions of \(\mc L_\phi\).
\item The ground state is given by \(\phi\) with corresponding ground state energy \(\lambda_0 = - 2c\).
\item The first eigenvalue \(\lambda_1>\lambda_0\) is positive.
\end{enumerate}
\end{prop}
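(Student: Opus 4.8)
Throughout, regard $\mc L_\phi = -(\phi^2 u')' - (\phi\phi'' + 2\phi^2)u$ as a singular Sturm--Liouville operator on $I = (-x_{\frac14,c},x_{\frac14,c})$, using two structural facts: by Proposition~\ref{propODE} (case $A=0$, $B>0$) $\phi$ vanishes like a square root at each endpoint, $\phi(-x_{\frac14,c}+z)\sim\sqrt{2\sqrt{2B}}\,z^{1/2}$; and the profile $\rho:=\phi^2 = c+\sqrt{1+c^2}\cos(\sqrt2 x)$ satisfies $\rho''+2\rho = 2c$. For claim (1), exactly as for $B=0$ the substitution $w=\phi u$ turns $\mc L_\phi u=\lambda u$ into $(\partial_x^2 + 2 + \lambda\rho^{-1})w = 0$; since the limit point/limit circle alternative does not depend on $\lambda$, take $\lambda=0$, so $w=a\cos(\sqrt2 x)+b\sin(\sqrt2 x)$ is analytic up to the endpoints and $u=w/\phi$, which behaves like $z^{-1/2}$ near an endpoint unless $w$ vanishes there (equivalently, Frobenius' method gives indicial exponents $\pm\tfrac12$). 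Thus at most a one-dimensional space of solutions is $L^2$ near each endpoint, so $\mc L_\phi$ is limit point at $\pm x_{\frac14,c}$, the Friedrichs extension is the unique self-adjoint extension, and every eigenfunction is pinned up to a scalar by its behaviour at one endpoint, so all eigenvalues are simple.

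For claim (2), the form domain of the Friedrichs extension is $\{u\in L^2(I):\phi u\in H^1_0(I)\}$, and I would show it embeds compactly into $L^2(I)$: if $u_n$ is bounded in the form norm then $w_n:=\phi u_n$ is bounded in $H^1_0(I)$, so a subsequence converges uniformly on $\overline I$ (Rellich together with $H^1(I)\subset C^{0,1/2}(\overline I)$); since $|w_n(x)|\lesssim\dist(x,\partial I)^{1/2}$ uniformly in $n$ while $\phi^2\sim\dist(x,\partial I)$ near $\partial I$, the quotients $|w_n-w|^2/\phi^2$ are uniformly bounded near $\partial I$, whence $u_n=w_n/\phi\to w/\phi$ in $L^2(I)$. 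Combined with the lower bound $\langle\mc L_\phi u,u\rangle\ge -2\|\phi\|_{L^\infty}^2\|u\|_{L^2}^2$, the Friedrichs operator is then bounded below with compact resolvent, so its spectrum is a discrete increasing sequence $\lambda_0<\lambda_1<\cdots$ with an orthogonal eigenbasis of $L^2(I)$, simple by the previous paragraph. (Alternatively, $t=-\int_0^x\phi^{-1}$ maps $I$ to a \emph{bounded} $t$-interval since $\phi^{-1}$ is integrable at the endpoints, and conjugates $\mc L_\phi$ to a Schr\"odinger operator on that interval with inverse-square, limit-point singularities at the ends, whose discreteness is classical.)

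For claims (3)--(4): from $\rho''+2\rho = 2c$ we get $\mc L_\phi\phi = -\phi(\partial_x^2+2)\rho = -2c\,\phi$, and since $\phi$ is the $z^{1/2}$-branch solution it lies in the Friedrichs domain, while $\phi>0$ on $I$ means it has no interior zero, so by Sturm oscillation $\phi=q_0$ and $\lambda_0=-2c$. If $c\le0$ then $\lambda_1>\lambda_0=-2c\ge 0$, and $\lambda_1\neq\lambda_0$ forces $\lambda_1>0$. The remaining case is $c=1$, where $\lambda_0=-2$: let $q_1$ be the second eigenfunction and $w_1=\phi q_1\in H^1_0(I)$, so that $\langle q_1,\phi\rangle=0$ reads $\int_I w_1=0$; then the sharp mean-zero Poincar\'e inequality on $I$, namely $\|w'\|_{L^2(I)}^2\ge(2\pi/|I|)^2\|w\|_{L^2(I)}^2$ for $w\in H^1_0(I)$ with $\int_I w=0$ (the extremal constant coming from $\sin(2\pi x/|I|)$, the even mean-zero modes — governed by $\tan s=s$ — giving a strictly larger Rayleigh quotient), yields
\[
\lambda_1\|q_1\|_{L^2}^2 \;=\; \|w_1'\|_{L^2}^2 - 2\|w_1\|_{L^2}^2 \;\ge\; \Bigl(\bigl(\tfrac{2\pi}{|I|}\bigr)^2 - 2\Bigr)\|w_1\|_{L^2}^2 .
\]
Since for $c=1$ one has $|I|=2x_{\frac14,1}=\tfrac{3\pi}{2\sqrt2}$, hence $(2\pi/|I|)^2=\tfrac{32}{9}>2$, and $w_1\not\equiv0$, we conclude $\lambda_1>0$. (As a consistency check, $0$ is never an eigenvalue for $c\neq0$: solutions of $\mc L_\phi u=0$ are $u=(a\cos\sqrt2 x+b\sin\sqrt2 x)/\phi$, and being $L^2$ at both endpoints forces the numerator to vanish at $\pm x_{\frac14,c}$, so $a=b=0$ because $\cos(\sqrt2 x_{\frac14,c})=-c/\sqrt{1+c^2}\neq0$.)

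The main obstacle is the $c=1$ case of claim (4): there $\lambda_0<0$, so positivity of $\lambda_1$ is not automatic and one must combine the variational characterisation of the second eigenvalue with the mean-zero Poincar\'e inequality on $I$ and the explicit length $|I|=3\pi/(2\sqrt2)$. A secondary difficulty is that the compactness in claim (2) has to be done by hand, since the natural weight $\phi^{-2}$ is not integrable at the endpoints and the usual discreteness criteria for singular Sturm--Liouville problems on a bounded interval do not directly apply.
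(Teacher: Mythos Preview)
Your proof is correct, and parts (1) and (3) follow the paper's argument essentially verbatim. For (2) and the $c=1$ case of (4), however, you take a genuinely different route.

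For (2), the paper writes down the explicit Green's kernel for $\mc L_\phi^{-1}$ via variation of parameters using the solutions $q_\pm(x)=\tfrac{1}{\sqrt2\,\phi}\sin\!\big(\sqrt2(x\mp x_{\frac14,c})\big)$ (or $\phi$ and $q_*$ when $c=0$), and checks directly that this kernel lies in $L^2(I\times I)$, so that $\mc L_\phi^{-1}$ is Hilbert--Schmidt. Your approach instead shows that the Friedrichs form domain $\{u\in L^2:\phi u\in H^1_0(I)\}$ embeds compactly into $L^2(I)$, using the uniform $C^{0,1/2}$ control on $w_n=\phi u_n$ together with $\phi^2\sim\dist(x,\partial I)$. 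This is more conceptual and avoids any explicit kernel computation; the paper's approach, by contrast, gives the slightly stronger Hilbert--Schmidt conclusion and yields the resolvent formula \eqref{VarParam} explicitly, which is reused later in the paper. One small point: your passage from ``$|w_n-w|^2/\phi^2$ uniformly bounded near $\partial I$'' to $L^2$-convergence is a standard $\epsilon/2$ splitting (uniform smallness of the boundary contribution, uniform convergence on compact subsets), but you might make that explicit.

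For (4) with $c=1$, the paper argues by Sturm comparison: assuming $\lambda_1<0$, the unique zero $x_*$ of $q_1$ lies within $\tfrac{\pi}{\sqrt2}$ of one endpoint (since $|I|<\sqrt2\,\pi$), and comparison with the $\lambda=0$ solution $q_-$ on $(-x_{\frac14,1},x_*)$ forces $q_-$ to vanish there, a contradiction. Your argument is variational: orthogonality $\langle q_1,\phi\rangle=0$ reads $\int_I w_1=0$ for $w_1=\phi q_1\in H^1_0(I)$, and the sharp mean-zero Poincar\'e constant $(2\pi/|I|)^2=32/9>2$ on the interval of length $|I|=3\pi/(2\sqrt2)$ gives $\lambda_1\|q_1\|^2=\|w_1'\|^2-2\|w_1\|^2>0$ directly. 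Your identification of the mean-zero Poincar\'e constant (odd minimiser $\sin(2\pi x/|I|)$, even modes governed by $\tan s=s$ with larger Rayleigh quotient) is correct. The paper's Sturm argument is more robust under perturbation of the interval length, while yours is quantitatively sharper here and in fact yields the explicit lower bound $\lambda_1\ge\tfrac{14}{9}\,\|w_1\|_{L^2}^2/\|q_1\|_{L^2}^2$.
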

\medskip

\subsubsection{The operator \(\mc L_\phi\) is limit point}
To show the operator is limit point we observe as before that the general solution to the homogeneous equation
\(
\mc L_\phi u = 0
\)
is given by \(u = \frac{a\cos(\sqrt 2x) + b\sin(\sqrt 2x)}{\phi(x)}\).

When \(c = \pm 1\) we may take \(a = \mp \frac1{\sqrt 2\sqrt{1 + c^2}}\), \(b =  - \frac c{\sqrt 2\sqrt{1 + x^2}}\), to obtain two linearly independent solutions to \(\mc L_\phi q_\pm = 0\) given by
\begin{equation}\label{qpm}
q_\pm(x) = \frac{\sin(\sqrt 2(x \mp x_{\frac14,c}))}{\sqrt 2\phi(x)},
\end{equation}
that satisfy the boundary conditions
\[
q_\pm(\pm x_{\frac14,c}) = 0,\qquad (\phi q_\pm)_x(\pm x_{\frac14,c}) = 1.
\]
As the length of the interval \(|I| \neq \frac{\pi}{2\sqrt 2}\) we have
\begin{alignat*}{3}
q_- &\in L^2((-x_{\frac14,c},0)),&&\qquad q_-&&\not\in L^2((0,x_{\frac14,c})),\\
q_+&\not\in L^2((-x_{\frac14,c},0)),&&\qquad q_+&&\in L^2((0,x_{\frac14,c})),
\end{alignat*}
so the operator is limit point.

When \(c = 0\) we have \(q_- = - q_+ = \frac1{\sqrt 2}\phi\). However, we may construct a second solution to the homogeneous ODE by taking
\[
q_* = \frac{\sin(\sqrt 2 x)}{\sqrt 2\phi(x)},
\]
and note that the operator is still limit point as \(q_*\not\in L^2(I)\).

\subsubsection{The operator \(\mc L_\phi^{-1}\) is Hilbert-Schmidt}
We now consider solutions to the ODE
\begin{equation}\label{InhomoODE}
\mc L_\phi w = f.
\end{equation}
If \(c = \pm 1\) and \(f\in C^\infty_0(I)\), the unique solution \(w \in L^2(I)\) is given by
\begin{equation}\label{VarParam}
\mc L_\phi^{-1}f(x) = C_c q_+(x)\int_{-x_{\frac14,c}}^x q_-(y)f(y)\,dy + C_c q_-(x) \int_x^{x_{\frac14,c}}q_+(y)f(y)\,dy,
\end{equation}
where the constant \(C_c>0\) is given by the (modified) Wronskian of the functions \(q_\pm\),
\[
\frac1{C_c} = W[q_-,q_+] = (\phi q_-)(\phi q_+)_x - (\phi q_+)(\phi q_-)_x = \mp\frac{\sqrt 2 c}{1 + c^2},\qquad c = \pm1.
\]
%
Using the formula \eqref{VarParam}, we may then show that operator \(\mc L_\phi^{-1}\) extends to a compact operator on \(L^2(I)\):
\medskip
\begin{lem}\label{lem:HS}
If \(c = \pm 1\) the operator \(\mc L_\phi^{-1}\) extends to a Hilbert-Schmidt operator on \(L^2(I)\). In particular, the operator \(\mc L_\phi\) has discrete spectrum and there exists a basis of \(L^2(I)\) consisting of simple eigenfunctions of \(\mc L_\phi\).
\end{lem}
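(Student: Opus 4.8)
The plan is to use the explicit variation-of-parameters formula~\eqref{VarParam} to realize $\mc L_\phi^{-1}$ as an integral operator with square-integrable kernel, and then to read off the spectral statements from the theory of compact self-adjoint operators together with standard Sturm--Liouville facts. Formula~\eqref{VarParam} writes $\mc L_\phi^{-1}$, initially defined on $f\in C^\infty_0(I)$, as
$$
\mc L_\phi^{-1}f(x)=\int_I\mc K(x,y)\,f(y)\,dy,\qquad
\mc K(x,y)=C_c\begin{cases}q_+(x)\,q_-(y),& y\le x,\\ q_-(x)\,q_+(y),& y\ge x,\end{cases}
$$
which is a \emph{symmetric} kernel, $\mc K(x,y)=\mc K(y,x)$. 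Away from $\pm x_{\frac14,c}$ the functions $q_\pm$ are smooth and bounded, so the only question for $\mc K\in L^2(I\times I)$ is the behaviour near the corners of $I\times I$.

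The first, and only non-routine, step is the Hilbert--Schmidt bound $\mc K\in L^2(I\times I)$. By the endpoint asymptotics of $\phi=\phi_{\frac14,c}$ recorded in Proposition~\ref{propODE}, case (iii)(2), one has $\phi(x)\sim a\,(x_{\frac14,c}-x)^{1/2}$ as $x\uparrow x_{\frac14,c}$ for some $a>0$, and the mirror statement at $-x_{\frac14,c}$; together with~\eqref{qpm} and the analysis preceding the lemma this gives, near $x_{\frac14,c}$, $q_+(x)\sim c_+(x_{\frac14,c}-x)^{1/2}$ and $q_-(x)\sim c_-(x_{\frac14,c}-x)^{-1/2}$ with $c_\pm\neq 0$, and near $-x_{\frac14,c}$ the same with the roles of $q_+$ and $q_-$ exchanged. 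At the corner $(x_{\frac14,c},x_{\frac14,c})$, set $s=x_{\frac14,c}-x$ and $t=x_{\frac14,c}-y$: on $\{y\le x\}$ one has $0<s<t$ and $|\mc K(x,y)|^2\lesssim s\,t^{-1}$, so $\iint_{0<s<t<\delta}s\,t^{-1}\,ds\,dt=\int_0^\delta\tfrac{t}{2}\,dt<\infty$; on $\{y\ge x\}$ one has $0<t<s$ and $|\mc K(x,y)|^2\lesssim s^{-1}t$, with the same bound. The corner $(-x_{\frac14,c},-x_{\frac14,c})$ is identical after exchanging $q_+\leftrightarrow q_-$, and at the two mixed corners $(\pm x_{\frac14,c},\mp x_{\frac14,c})$ both factors of $\mc K$ vanish, so $\mc K$ is bounded there. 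Hence $\mc L_\phi^{-1}$ extends from $C^\infty_0(I)$ to a Hilbert--Schmidt, and in particular compact, operator on $L^2(I)$.

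With the compactness in hand the spectral conclusions are standard. Since $\mc L_\phi$ is limit point at both endpoints, the Friedrichs extension is its unique self-adjoint extension; moreover $0$ is not an eigenvalue, because the general homogeneous solution $a q_++b q_-$ lies in $L^2(I)$ only for $a=b=0$ (each of $q_+,q_-$ fails to be square-integrable near exactly one endpoint, with no possible cancellation). Thus $\mc L_\phi^{-1}$ is a well-defined bounded, compact, self-adjoint operator, and the spectral theorem provides an orthonormal basis $(q_j)$ of $L^2(I)$ of eigenfunctions with eigenvalues $\lambda_j\to 0$, $\lambda_j\neq 0$; these are eigenfunctions of $\mc L_\phi$ with eigenvalues $\mu_j=\lambda_j^{-1}\to\infty$, so $\mc L_\phi$ has purely discrete spectrum. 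Finally, each eigenvalue is simple: for fixed $\mu\in\R$ the solutions of $\mc L_\phi u=\mu u$ form a two-dimensional space, but the limit-point condition forces at most a one-dimensional subspace to lie in $L^2$ near $-x_{\frac14,c}$, and likewise near $x_{\frac14,c}$, so an eigenfunction, lying in both, spans a one-dimensional space (alternatively, invoke the Sturm oscillation theorem).

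The main obstacle is the Hilbert--Schmidt estimate: one must check that the product of the factor of $q_-$ that blows up like $(\mathrm{dist})^{-1/2}$ with the factor of $q_+$ that vanishes like $(\mathrm{dist})^{1/2}$, integrated over the correct triangular region at each diagonal corner, is integrable. This is exactly where the sharp endpoint asymptotics of Proposition~\ref{propODE} and the limit-point property — which ensures that at each endpoint only one of $q_+,q_-$ is singular — are used; the remaining steps are routine functional analysis and Sturm--Liouville theory.
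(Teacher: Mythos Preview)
Your proof is correct and follows essentially the same approach as the paper: both write $\mc L_\phi^{-1}$ as an integral operator with the kernel $\mc K(x,y)=C_c\,q_+(x\vee y)\,q_-(x\wedge y)$, use the endpoint asymptotics $\phi\sim\bd^{1/2}$ to obtain $|q_\pm|^2\sim\bd^{\pm 1}$ near the corresponding endpoints, and check $\mc K\in L^2(I\times I)$. The only cosmetic difference is that the paper computes $\int_I|\mc K(x,y)|^2\,dy\lesssim 1$ as an iterated integral (getting factors like $\bd(x)(1+|\log\bd(x)|)$) and then integrates in $x$, whereas you localize to the four corners of $I\times I$; both arguments are equivalent, and your treatment of the spectral consequences (simplicity via the limit-point property) matches the paper's.
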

\begin{proof}
The kernel of the operator \(\mc L_\phi^{-1}\) is given by
\[
K(x,y) = C_c q_+(x)q_-(y)\mbf1_{(-x_{\frac14,c},x)}(y) + C_c q_-(x)q_+(y)\mbf1_{(x,x_{\frac14,c})}(y).
\]
Taking \(\bd(x) = \dist(x,\partial I)\) we may use the fact that \(\phi(x)\sim \bd^{\frac12}\) near \(x = \pm x_{\frac14,c}\) and the explicit expression \eqref{qpm} to obtain the bounds
\[
|q_\pm(x)|^2\sim\begin{cases}\bd(x),&\quad \pm x>0\\\bd(x)^{-1},&\quad \pm x<0.\end{cases}
\]
In particular,
\[
\int_{-x_{\frac14,c}}^x |q_-(y)|^2\,dy \sim \begin{cases}\bd(x)^2,&\ x<0\\ 1 + |\log\bd(x)| ,&\ x>0\end{cases},\qquad \int^{x_{\frac14,c}}_x |q_+(y)|^2 \,dy \sim \begin{cases}\bd(x)^2,&\ x>0\\1 + |\log\bd(x)|,&\ x<0\end{cases},
\]
and hence 
\[
\int_{-x_{\frac14,c}}^{x_{\frac14,c}}|K(x,y)|^2\,dy\lesssim 1.
\]
Integrating this in \(x\) we see that the operator \(\mc L_\phi^{-1}\) is Hilbert-Schmidt and hence compact. The fact that the eigenvalues of \(\mc L_\phi\) are simple follows from the fact that \(\mc L_\phi\) is limit point at \(x = \pm x_{\frac14,c}\).
\end{proof}
\medskip

When \(c = 0\) we require an orthogonality condition to obtain a unique solution to the ODE \eqref{InhomoODE}. Thus we define the space
\[
L^2_\perp(I) = \{w\in L^2(I):\<w,\phi\> = 0\}.
\]
For \(f\in C^\infty_0(I)\) satisfying \(\<f,\phi\> = 0\) we take \(w = \mc L_\phi^{-1}f\) to be the unique solution \(w\in L^2_\perp(I)\) to the equation \eqref{InhomoODE} given by
\begin{equation}\label{VarParamc0}
\begin{aligned}
\mc L_\phi^{-1}f(x) &= q_*(x)\left(\frac12\int_{-x_{\frac14,0}}^xf(y)\phi(y)\,dy - \frac12\int_x^{x_{\frac14,0}}f(y)\phi(y)\,dy \right)\\
&\quad + \phi(x)\left(\frac12\int_{-x_{\frac14,0}}^xf(y)q_*(y)\,dy - \frac12\int_x^{x_{\frac14,0}}f(y)q_*(y)\,dy - \frac1{2\sqrt 2} \int_{-x_{\frac14,0}}^{x_{\frac14,0}}f(y)\,dy\right).
\end{aligned}
\end{equation}
An essentially identical proof to Lemma~\ref{lem:HS} yields the following:
\medskip
\begin{lem}\label{lem:HS2}
If \(c = 0\) the operator \(\mc L_\phi^{-1}\) extends to a Hilbert-Schmidt operator on the space \(L^2_\perp(I)\). In particular, the operator \(\mc L_\phi\) has discrete spectrum and there exists a basis of \(L^2_\perp(I)\) consisting of simple eigenfunctions of \(\mc L_\phi\).
\end{lem}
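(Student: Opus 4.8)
The plan is to follow the proof of Lemma~\ref{lem:HS} essentially verbatim: read off the integral kernel of $\mc L_\phi^{-1}$ from the variation-of-parameters formula~\eqref{VarParamc0}, show that (after a harmless modification adapted to $L^2_\perp$) this kernel lies in $L^2(I\times I)$ using the boundary asymptotics $\phi_{\frac14,0}(x)^2\sim\bd(x)$ and $q_*(x)^2\sim\bd(x)^{-1}$, and then deduce the spectral statements exactly as before: Hilbert--Schmidt implies compact, compactness of the self-adjoint operator $\mc L_\phi^{-1}$ on $L^2_\perp(I)$ yields an orthonormal basis of eigenfunctions whose eigenvalues accumulate only at $0$ (hence a purely discrete spectrum for $\mc L_\phi$ on $L^2_\perp$), and simplicity of the eigenvalues follows from $\mc L_\phi$ being limit point at both endpoints --- which was already verified, since $q_*\notin L^2(I)$ --- so that there is a unique self-adjoint realization, cf.~\cite[Theorems~9.6,~9.9]{MR3243083}.

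There is exactly one place where the argument genuinely departs from Lemma~\ref{lem:HS}, and it is the step that requires care. When $c=\pm1$ the homogeneous solutions $q_\pm$ from~\eqref{qpm} are square integrable at one endpoint each, so the Green's kernel in~\eqref{VarParam} automatically pairs the singular factor with an integration over the short interval adjoining the \emph{other} endpoint, and the kernel is directly in $L^2(I\times I)$. For $c=0$ there is effectively only one good solution, namely $\phi$ itself ($\phi^2\sim\bd$ at both endpoints), while the second solution $q_*$ blows up like $\bd^{-1/2}$ at \emph{both} endpoints; consequently the kernel read off literally from~\eqref{VarParamc0} is \emph{not} in $L^2(I\times I)$ --- already the contribution of the term $\phi(x)q_*(y)$ to $\iint|K|^2$ equals a constant multiple of $\|\phi\|_{L^2}^2\|q_*\|_{L^2}^2=\infty$. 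Here the restriction to $L^2_\perp(I)$ enters in an essential way. Modifying the kernel by any expression of the form $\lambda(x)\phi(y)+\phi(x)\mu(y)$ changes neither the operator on $L^2_\perp(I)$ nor its bilinear form on $L^2_\perp\times L^2_\perp$; concretely, for $f$ with $\langle f,\phi\rangle=0$ the rank-one term $-\frac1{2\sqrt2}\phi(x)\int_I f$ vanishes and may be dropped, and the $q_*(x)$-term in~\eqref{VarParamc0}, which equals $q_*(x)\int_{-x_{\frac14,0}}^x f\phi$, may equally be written $-q_*(x)\int_x^{x_{\frac14,0}}f\phi$. Choosing the first representation when $x$ lies near $-x_{\frac14,0}$, the second when $x$ lies near $x_{\frac14,0}$, and symmetrically for the factor $q_*(y)$ in the $y$-variable, one arrives at a kernel $\widetilde K$ which represents the same operator on $L^2_\perp(I)$ and in which every occurrence of $q_*$ is paired with an integration over the short interval joining the relevant variable to the adjacent endpoint.

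With this in hand the Hilbert--Schmidt bound is the same computation as in Lemma~\ref{lem:HS}: near an endpoint one has $\int_x^{x_{\frac14,0}}|\phi(y)|^2\,dy\sim\bd(x)^2$ (and likewise $\int_{-x_{\frac14,0}}^x|\phi(y)|^2\,dy\sim\bd(x)^2$ near the left endpoint) while $q_*(x)^2\sim\bd(x)^{-1}$, so each piece of $\widetilde K$ contributes
\[
\iint_{I\times I}|\,\cdot\,|^2\,dx\,dy\ \lesssim\ \int_I q_*(x)^2\,\bd(x)^2\,dx\ \lesssim\ \int_I\bd(x)\,dx\ <\ \infty,
\]
while on any compact subset of the interior of $I$ the functions $\phi$, $q_*$, and hence $\widetilde K$, are smooth and bounded; thus $\widetilde K\in L^2(I\times I)$ and $\mc L_\phi^{-1}$ extends to a Hilbert--Schmidt operator on $L^2_\perp(I)$. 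Everything else --- compactness, the eigenbasis of $L^2_\perp(I)$, and simplicity of the eigenvalues via the limit point property --- then goes through word for word as in Lemma~\ref{lem:HS}. The main obstacle, accordingly, is the bookkeeping of the previous paragraph: one must check that the rearrangement using $\langle f,\phi\rangle=0$ leaves the operator unchanged and that the switch between the two representations can be placed at an interior point of $I$, so that $\widetilde K$ is bounded there and the only contributions to its $L^2(I\times I)$-norm come from neighborhoods of $\pm x_{\frac14,0}$. (Alternatively, since $\phi^{-1}\sim\bd^{-1/2}$ is integrable, the change of variables $t(x)=-\int_0^x\phi^{-1}$ maps $I$ onto a bounded interval and turns $\mc L_\phi$ into a Schr\"odinger-type operator whose eigenvalues obey Weyl's law $\lambda_n\sim Cn^2$, giving $\sum_n\lambda_n^{-2}<\infty$ and hence Hilbert--Schmidt directly.)
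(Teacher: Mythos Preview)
Your argument is correct and is exactly what the paper intends by ``an essentially identical proof to Lemma~\ref{lem:HS}'', which is all the paper actually provides. You have in fact been more careful than the paper: you correctly isolate the one genuine difference from the case $c=\pm1$ --- that $q_*$ is singular at \emph{both} endpoints, so the raw kernel from~\eqref{VarParamc0} is not in $L^2(I\times I)$ and the orthogonality to $\phi$ (together with the freedom to alter the output by a multiple of $\phi$) must be used to pass to an equivalent $L^2$ kernel --- and your rearrangement handles this cleanly before the endpoint estimates $\phi^2\sim\bd$, $q_*^2\sim\bd^{-1}$ finish the job exactly as in Lemma~\ref{lem:HS}.
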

\medskip

\subsubsection{The ground state and first harmonic}
Recalling that \(\mc L_\phi = - 2c\phi\) has no zeros in the interval \(I\) we see that this is the ground state. In particular, for \(c = -1, 0\) we must have that the first harmonic has positive eigenvalue \(\lambda_1>0\). In the case \(c= 1\) we have the following lemma:
\medskip
\begin{lem}\label{lem:Coercive}
When \(c=1\) the first eigenvalue \(\lambda_1>\lambda_0\) is positive.
\end{lem}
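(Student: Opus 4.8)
The plan is to reduce the claim to a Poincaré--type inequality on $I$ and to exploit that $I$ is short enough (its length is determined by the condition $\tan(\sqrt2 x_{\frac14,1})=-1$) for the relevant constant to beat the potential strength $2$. By Lemma~\ref{lem:HS} the operator $\mc L_\phi$ has discrete, simple spectrum $\lambda_0<\lambda_1<\cdots$; since $\mc L_\phi\phi=-2\phi$ and $\phi>0$ on $I$, Sturm oscillation identifies $\phi$ as the ground state, so $\lambda_0=-2$, and $\lambda_1=\min_{\,w\perp\phi,\ w\neq0}\langle\mc L_\phi w,w\rangle/\|w\|^2$ is attained by an eigenfunction $q_1\perp\phi$ having exactly one zero in $I$. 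It therefore suffices to show $\langle\mc L_\phi q_1,q_1\rangle>0$.

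\textbf{Rewriting the quadratic form.} First I would record that for $w$ in the Friedrichs form domain one has, after integrating by parts (the boundary terms vanish), $\langle\mc L_\phi w,w\rangle=\|(\phi w)_x\|_{L^2(I)}^2-2\|\phi w\|_{L^2(I)}^2$. Setting $\psi:=\phi w$ turns this into $\|\psi_x\|_{L^2(I)}^2-2\|\psi\|_{L^2(I)}^2$, while $\langle w,\phi\rangle=\int_I\psi\,dx$. Applied to $w=q_1$: because $\mc L_\phi$ is limit point, $q_1$ is the $L^2$ solution near each endpoint, which behaves like $\dist(x,\partial I)^{1/2}$, so $\psi_1:=\phi q_1$ vanishes linearly at $\pm x_{\frac14,1}$; hence $\psi_1\in H^1_0(I)$, $\int_I\psi_1\,dx=0$, the boundary terms are indeed zero, and
$$
\lambda_1\|q_1\|_{L^2(I)}^2=\|\psi_1'\|_{L^2(I)}^2-2\|\psi_1\|_{L^2(I)}^2 .
$$

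\textbf{The crux: a Poincaré inequality.} The heart of the matter is then $\|\psi'\|_{L^2(I)}^2>2\|\psi\|_{L^2(I)}^2$ for every nonzero $\psi\in H^1_0(I)$ with $\int_I\psi\,dx=0$. Here is where the geometry enters: from $\tan(\sqrt2 x_{\frac14,1})=-1$ one gets $\sqrt2 x_{\frac14,1}=\tfrac{3\pi}{4}$, so $|I|=2x_{\frac14,1}=\tfrac{3\pi}{2\sqrt2}$ and the sharp Poincaré constant for mean-zero functions in $H^1_0(I)$ is $\big(\tfrac{2\pi}{|I|}\big)^2=\big(\tfrac{4\sqrt2}{3}\big)^2=\tfrac{32}{9}>2$. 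I would justify the constant in one of two equivalent ways: directly, noting a minimizer of $\|\psi'\|^2/\|\psi\|^2$ on this constrained space solves $-\psi''=\mu\psi+\mathrm{const}$ with Dirichlet data and that the least admissible $\mu$ is $(2\pi/|I|)^2$, realized by $\sin(2\pi x/|I|)$; or, more slickly, via parity --- since $\phi$ is even and the eigenvalues are simple, $q_1$ has definite parity and cannot be even (an even eigenfunction with a single zero would have that zero at the origin, where its derivative also vanishes, contradicting simplicity of zeros), so $q_1$ is odd, $\psi_1$ vanishes at $0$ as well, and one applies the standard Dirichlet--Poincaré inequality on each half-interval $(0,x_{\frac14,1})$ of length $|I|/2$, whose constant is $\big(\tfrac{\pi}{|I|/2}\big)^2=\tfrac{32}{9}$. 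Either way, combined with the identity above,
$$
\lambda_1\|q_1\|_{L^2(I)}^2\ \ge\ \Big(\tfrac{32}{9}-2\Big)\|\psi_1\|_{L^2(I)}^2=\tfrac{14}{9}\|\psi_1\|_{L^2(I)}^2>0,
$$
so $\lambda_1>0$.

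\textbf{Main obstacle.} The main obstacle is precisely this last step: the naive Dirichlet--Poincaré constant on $I$ is only $(\pi/|I|)^2=\tfrac{8}{9}<2$, so it is essential to use the extra orthogonality to $\phi$ (equivalently, the mean-zero/odd structure), which effectively halves the interval and quadruples the constant; one must check that the resulting constant $\tfrac{32}{9}$ still exceeds $2$, which is where the specific value $\tan(\sqrt2 x_{\frac14,1})=-1$ is used. A secondary technical point is the vanishing of the boundary terms when rewriting $\langle\mc L_\phi q_1,q_1\rangle$, which relies on the $\dist(x,\partial I)^{1/2}$ endpoint asymptotics of the limit-point solution already exploited in Lemma~\ref{lem:HS}.
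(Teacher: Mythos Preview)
Your proof is correct, but it follows a genuinely different route from the paper's. The paper argues by contradiction via Sturm comparison: assuming $\lambda_1<0$, the unique interior zero $x_*$ of the eigenfunction $q_1$ must satisfy either $x_*<\tfrac{\pi}{\sqrt2}-x_{\frac14,1}$ or $x_*>x_{\frac14,1}-\tfrac{\pi}{\sqrt2}$; in the first case (the second is symmetric), Sturm comparison on $(-x_{\frac14,1},x_*)$ between $q_1$ (eigenvalue $\lambda_1<0$) and the explicit null solution $q_-$ (eigenvalue $0$) forces $q_-$ to vanish before $x_*$, contradicting that its first zero sits at $\tfrac{\pi}{\sqrt2}-x_{\frac14,1}$.

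Your approach instead converts the quadratic form via $\psi=\phi q$ into $\|\psi'\|^2-2\|\psi\|^2$ and then wins by a Poincar\'e inequality, using parity (or the mean-zero condition coming from $q_1\perp\phi$) to push the constant up from $(\pi/|I|)^2=8/9$ to $(2\pi/|I|)^2=32/9>2$. This is more quantitative --- it gives an explicit positive lower bound --- and avoids the Sturm comparison machinery, at the cost of a small extra argument (your parity alternative is the clean one; the direct mean-zero claim that the optimal constant is exactly $(2\pi/|I|)^2$ needs a line saying the even mean-zero competitors do worse, since $\mathbf 1$ is not itself a Dirichlet eigenfunction). Both proofs ultimately hinge on the same numerology, namely $\sqrt2\,x_{\frac14,1}=3\pi/4$, but exploit it in different ways: the paper through the location of the zeros of $q_\pm$, you through the size of the spectral gap for $-\partial_x^2$ on the (half-)interval.
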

\begin{proof}
Suppose for a contradiction that \(\lambda_1<0\). Let \(x_*\in I\) be the zero of the corresponding eigenfunction \(q_1\) and, recalling that the length of the interval \(|I| = 2x_{\frac14,1} \in (\frac\pi{\sqrt2},\sqrt2\pi)\), we must either have \(x_*<\frac{\pi}{\sqrt 2} - x_{\frac14,1}\) or \(x_*>x_{\frac14,1} - \frac{\pi}{\sqrt 2}\). Without loss of generality assume that \(x_*<\frac{\pi}{\sqrt 2} - x_{\frac14,1}\). If \(q\) lies in the domain of \(\mc L_\phi\), then we must have that \(\phi q\in H^1_0(I)\)
so recalling the Wronskian is given by
\[
W[q_-,q] = (\phi q_-)(\phi q)_x - (\phi q_-)_x(\phi q),
\]
we see that \(W[q_-,q](-x_{\frac14,1}) = 0\). In particular, taking \(q = q_1\) we may apply the Sturm comparison principle on the interval \((-x_{\frac14,1},x_*)\) to show that \(q_-\) must have a zero in the interval \((-x_{\frac14,1},x_*)\), which is a contradiction. Thus, \(\lambda_1>0\).
\end{proof}
\medskip

\subsection{Energy spaces}
The equation \eqref{Basic} has a formally conserved energy,
\[
E_\phi[w] = \|(\phi w)_x\|_{L^2}^2 - 2\|\phi w\|_{L^2}^2 = \< \mc L_\phi w,w\>.
\]
From Propositions~\ref{prop:phi01},~\ref{prop:LinearOpProps} we see that
\[
\|w\|_{L^2}^2\leq (1 + 2c)\|w\|_{L^2}^2 + E_\phi[w],
\]
and hence we may define a natural energy space \(\X\subset L^2(I)\) associated to the equation \eqref{Basic} given by the completion of \(C^\infty_0(I)\) under the norm
\[
\|w\|_{\X}^2 = (1 + 2c)\|w\|_{L^2}^2 + E_\phi[w].
\]

As the \(L^2\)-norm is not conserved by \eqref{Basic} it is natural to define the subset \(\dot \X\subseteq \X\) where we define
\[
\dot \X = \begin{cases}
\left\{w\in \X:\<w,\phi\> = 0 = \<w,\phi_x\>\right\},&\quad B = 0,c = 1\\
\left\{w\in \X:\<w,\phi\> = 0\right\},&\quad B = \frac14, c = 1,0\\
\X,&\quad B = \frac14, c = -1,
\end{cases}
\]
where we note that the orthogonality condition
\[
\< w,\phi\> = 0
\]
is conserved by the flow of \eqref{Basic} for all choices of \(B,c\) and that the orthogonality condition
\[
\< w,\phi\> = 0 = \<w,\phi_x\>
\]
is conserved when \(B = 0\) due to the fact that \(\phi_{xx} = - \frac12\phi\). A simple consequence of Propositions~\ref{prop:phi01},~\ref{prop:LinearOpProps} is that whenever \(w\in \dot\X\) we have the estimate
\begin{equation}\label{Coercive}
\|w\|_{L^2}^2\lesssim E_\phi[w],
\end{equation}
thus it is natural to define the norm
\[
\|w\|_{\dot \X}^2 = E_\phi[w],
\]
with associated inner product
\[
(f,g)_{\dot\X} = \<(\phi f)_x,(\phi g)_x\> - 2\<\phi f,\phi g\>.
\]

We now consider the operator \(\partial_x\mc L_\phi\) as an unbounded operator on \(\dot \X\) with domain
\[
\dot \X^1 = \left\{w\in \dot \X:\partial_x\mc L_\phi w\in \dot \X,\ \mc L_\phi w\big|_{x = x_{B,c}} = 0 \right\}.
\]
We may endow \(\dot \X^1\) with the norm
\[
\|w\|_{\dot \X^1}^2 = \|w\|_{\dot \X}^2 + \|\partial_x\mc L_\phi w\|_{\dot \X}^2,
\]
and then have the following lemma:
\medskip
\begin{lem}\label{lem:compact}
The embedding \(\dot \X^1\subset \dot \X\) is compact.
\end{lem}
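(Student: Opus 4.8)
The plan is to show that the unit ball of $\dot{\X}^1$ is relatively compact in $\dot{\X}$, reducing everything to the structure of $\mc L_\phi^{-1}$ already established. Take a sequence $(w_n)$ bounded in $\dot{\X}^1$; then both $\|w_n\|_{\dot{\X}}$ and $\|\partial_x \mc L_\phi w_n\|_{\dot{\X}}$ are bounded. Writing $f_n = \partial_x\mc L_\phi w_n \in \dot{\X}$, the coercivity estimate \eqref{Coercive} gives $\|f_n\|_{L^2}\lesssim \|f_n\|_{\dot{\X}}$, so $(f_n)$ is bounded in $L^2(I)$. Since $\mc L_\phi w_n\big|_{x=x_{B,c}}=0$ and $\mc L_\phi w_n$ also vanishes at $x = -x_{B,c}$ (as $w_n$ lies in the energy space, so $\phi w_n\in H^1_0$ and the natural boundary term vanishes — this is the content of the limit point property recorded in Propositions~\ref{prop:phi01} and~\ref{prop:LinearOpProps}), we may recover $\mc L_\phi w_n$ from $f_n$ by integration: $\mc L_\phi w_n(x) = \int_{-x_{B,c}}^x f_n(y)\,dy$, possibly after subtracting the mean to respect the orthogonality/compatibility constraint in the cases $B=\frac14,c=0$ and $B=0,c=1$. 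In all cases $\mc L_\phi w_n$ is bounded in $H^1(I)$, hence by Rellich a subsequence converges strongly in $L^2(I)$ (and uniformly, $I$ being a bounded interval).

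Next I would feed this back through the inverse operator. On $\dot{\X}$ we have $w_n = \mc L_\phi^{-1}(\mc L_\phi w_n)$, and by Lemmas~\ref{lem:HS2_B0_c1}, \ref{lem:HS}, \ref{lem:HS2} the operator $\mc L_\phi^{-1}$ is bounded (indeed Hilbert–Schmidt, hence compact) from $L^2$ — or the appropriate $L^2_\perp$ — to itself. Therefore $w_n = \mc L_\phi^{-1}(\mc L_\phi w_n)$ converges strongly in $L^2(I)$ along the subsequence. It remains to upgrade $L^2$-convergence to $\dot{\X}$-convergence. For this, observe that $\|w_n - w_m\|_{\dot{\X}}^2 = \<\mc L_\phi(w_n-w_m), w_n - w_m\>$; integrating by parts (legitimate since the boundary terms vanish by the limit point property), this equals $\<\mc L_\phi w_n - \mc L_\phi w_m,\, w_n - w_m\>_{L^2}$, which is bounded by $\|\mc L_\phi w_n - \mc L_\phi w_m\|_{L^2}\,\|w_n-w_m\|_{L^2}$. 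Both factors tend to $0$ along the subsequence, so $(w_n)$ is Cauchy in $\dot{\X}$, proving compactness of the embedding.

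The step I expect to be the main obstacle is the careful justification of the boundary terms at $x=\pm x_{B,c}$: one needs that for $w\in\dot{\X}^1$ the quantities $(\phi w)(\phi w')$ and $(\phi w)\,\partial_x(\mc L_\phi w)$ and their counterparts genuinely vanish at the endpoints, so that the two integrations by parts above are valid and so that $\mc L_\phi w$ can be reconstructed as an indefinite integral of $\partial_x\mc L_\phi w$ with the stated boundary data. This hinges on the limit point nature of $\mc L_\phi$ together with the endpoint asymptotics $\phi\sim\bd^{1/2}$ from Proposition~\ref{propODE}; the argument is essentially the one already used in the proof of Lemma~\ref{lem:HS}, applied here to control $\phi w$ rather than the Green's kernel. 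Once this is in hand the rest is soft functional analysis. A minor point is bookkeeping across the four cases: in the cases with an orthogonality constraint one works on $L^2_\perp$ throughout and uses the corresponding variant of the inverse, but the argument is otherwise identical.
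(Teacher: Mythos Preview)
Your approach is essentially identical to the paper's: bound $\mc L_\phi w_n$ in $H^1(I)$ using the boundary condition and coercivity, apply Rellich to get $L^2$ convergence of $\mc L_\phi w_n$, invert $\mc L_\phi$ (continuity suffices) to get $L^2$ convergence of $w_n$, then upgrade via $\|w\|_{\dot\X}^2 = \langle\mc L_\phi w, w\rangle \leq \|\mc L_\phi w\|_{L^2}\|w\|_{L^2}$.

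One correction, however: your claim that $\mc L_\phi w_n$ vanishes at the \emph{left} endpoint $x=-x_{B,c}$ is incorrect. Neither the limit point property nor $\phi w_n\in H^1_0$ implies this; the limit point condition concerns Wronskians, not the value of $\mc L_\phi w$. In fact the paper explicitly introduces $\Upsilon = \lim_{x\downarrow -x_{B,c}}\mc L_\phi w$, which is generally nonzero and enters as a boundary flux in the dissipativity identity \eqref{NonnegativeGoodness} and in Remark~\ref{rem:Distribs}. Fortunately this does not break your argument: the single boundary condition $\mc L_\phi w_n\big|_{x=x_{B,c}}=0$, which \emph{is} imposed in the definition of $\dot\X^1$, already yields $\|\mc L_\phi w_n\|_{L^2}\lesssim \|\partial_x\mc L_\phi w_n\|_{L^2}$ by the one-sided Poincar\'e inequality (integrate from the right endpoint). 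So replace your formula by $\mc L_\phi w_n(x) = -\int_x^{x_{B,c}} f_n(y)\,dy$, drop the left-endpoint claim, and the rest goes through exactly as written --- and exactly as in the paper.
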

\begin{proof}
If \(w\in \dot \X^1\) then we may use the fact that \(\mc L_\phi w\big|_{x = x_{B,c}} = 0\) and the estimate \eqref{Coercive} to obtain
\[
\|\mc L_\phi w\|_{H^1(I)}\lesssim \|\partial_x\mc L_\phi w\|_{L^2}\lesssim \|\partial_x\mc L_\phi w\|_{\dot \X} \lesssim\|w\|_{\dot\X^1},
\]
where \(H^1(I)\) is the usual Sobolev space on \(I\).

Now let \(\{u^{(j)}\}\subset \dot\X^1\) be a bounded sequence. Then \(\{\mc L_\phi u^{(j)}\}\subset H^1(I)\) is also bounded so as the embedding \(H^1(I)\subset L^2(I)\) is compact, passing to a subsequence there exists some \(v\in L^2\) so that
\[
\mc L_\phi u^{(j)} \rightarrow v\text{ in }L^2(I).
\] 
Next we define \(u = \mc L_\phi^{-1}v\in L^2(I)\) (or \(\in L^2_\perp(I)\) when \(B = 0\) or \(c = 0\)) and by continuity of \(\mc L_\phi^{-1}\),
\[
u^{(j)}\rightarrow u\text{ in }L^2(I).
\]

Finally we observe that if \(f,g\in \dot\X\) and \(\mc L_\phi f\in L^2(I)\),
\[
|(f,g)_{\dot\X}|\leq \|\mc L_\phi f\|_{L^2}\|g\|_{L^2},
\]
where the integration by parts may be justified by observing that whenever \(f\in \dot \X\) we have \(\phi f\in H^1_0(I)\) and hence \(\phi f|_{\partial I} = 0\). Taking \(f = g = u^{(j)} - u\) in this identity we then obtain
\[
u^{(j)}\rightarrow u\text{ in }\dot \X,
\]
as required.
\end{proof}
\medskip

\subsection{The linear semigroup and local well-posedness of a linear equation}
For all \(w\in \dot \X^1\) we have
\begin{equation}\label{NonnegativeGoodness}
(\partial_x\mc L_\phi w,w)_{\dot\X} = \frac12\left(\mc L_\phi w\right)^2\big|_{x = - x_{B,c}}^{x = x_{B,c}}\leq 0,
\end{equation}
and hence \(\partial_x\mc L_\phi\) is dissipative. This then allows us to construct a semigroup \(S(t) = e^{t\partial_x\mc L_\phi}\):
\medskip
\begin{lem}\label{lem:Semigroup}
The operator \(\partial_x\mc L_\phi\) generates a contraction semigroup \(S(t) = e^{t\partial_x\mc L_\phi}\colon \dot\X\rightarrow\dot\X\).
\end{lem}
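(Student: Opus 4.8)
The plan is to obtain the semigroup from the Lumer--Phillips theorem on the real Hilbert space \(\dot\X\). Thus we must check that the operator \(A := \partial_x\mc L_\phi\), with domain \(\dot\X^1\), is (a) densely defined, (b) dissipative, and (c) satisfies the range condition \(\mathrm{Ran}(\lambda I - A) = \dot\X\) for some (equivalently, all) \(\lambda > 0\). Granting these, Lumer--Phillips produces the contraction semigroup \(S(t) = e^{tA}\) on \(\dot\X\); note also that once (b) and (c) hold, the a priori bound \(\lambda\|w\|_{\dot\X} \leq \|(\lambda I - A)w\|_{\dot\X}\) (obtained by pairing \((\lambda I - A)w\) with \(w\) in \(\dot\X\) and using \eqref{NonnegativeGoodness}) forces \((\lambda I - A)^{-1}\) to be bounded on all of \(\dot\X\), hence \(A\) is automatically closed and no separate verification is required.

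Property (b) is exactly \eqref{NonnegativeGoodness}. For (a), any \(w \in C^\infty_0(I)\) supported away from \(\partial I\) has \(\mc L_\phi w \equiv 0\) near \(x_{B,c}\) and \(\partial_x\mc L_\phi w \in C^\infty_0(I) \subset \dot\X\); subtracting its \(\dot\X\)-orthogonal projection onto \(\phi\) (and, when \(B=0\), also onto \(\phi_x\)) puts it in \(\dot\X^1\), since these corrections preserve membership because \(\mc L_\phi\phi = -2c\phi\), \(\mc L_\phi\phi_x = 0\), and the constraints \(\<\cdot,\phi\> = 0\) (and \(\<\cdot,\phi_x\> = 0\) when \(B=0\)) are precisely the ones already shown to be conserved by the flow \eqref{Basic}. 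Density of such functions in \(\dot\X\) then follows from the definition of \(\dot\X\) as a completion of \(C^\infty_0(I)\).

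The heart of the matter is (c). Fix \(\lambda > 0\) and \(f \in \dot\X\); we seek \(w \in \dot\X^1\) with \(\lambda w - \partial_x\mc L_\phi w = f\). Setting \(h = \mc L_\phi w\), the boundary condition and the equation give \(h(x_{B,c}) = 0\) and \(h' = \lambda w - f\), hence \(h(x) = -\int_x^{x_{B,c}}(\lambda w - f)\). Feeding this through the bounded (indeed Hilbert--Schmidt) solution operator \(\mc L_\phi^{-1}\) of Lemmas~\ref{lem:HS2_B0_c1},~\ref{lem:HS},~\ref{lem:HS2}, the problem becomes the fixed-point equation \(w = \lambda\,\mc L_\phi^{-1}\bigl(x \mapsto -\int_x^{x_{B,c}} w\bigr) + g\) with \(g \in L^2\) fixed (when \(B=0\) or \(c=0\) one first inserts the harmless orthogonal projection onto \(L^2_\perp(I)\), the domain of \(\mc L_\phi^{-1}\) in those cases). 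The operator \(w \mapsto \mc L_\phi^{-1}\bigl(x \mapsto -\int_x^{x_{B,c}} w\bigr)\) is compact on \(L^2(I)\): the integration map sends \(L^2(I)\) boundedly into \(H^1(I)\), which embeds compactly into \(L^2(I)\) since \(|I| < \infty\), and \(\mc L_\phi^{-1}\) is bounded on \(L^2(I)\). By the Fredholm alternative the equation is solvable provided the homogeneous problem \(\lambda w = \partial_x\mc L_\phi w\), \(\mc L_\phi w|_{x_{B,c}} = 0\) has only the trivial solution; but any such \(w\) lies in \(\dot\X^1\) (by the elliptic regularity implicit in \(\mc L_\phi^{-1}\)) and satisfies \(\lambda\|w\|_{\dot\X}^2 = (\partial_x\mc L_\phi w, w)_{\dot\X} \leq 0\) by \eqref{NonnegativeGoodness}, so \(w = 0\) because \(\lambda > 0\). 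One then checks the solution \(w\) genuinely lies in \(\dot\X^1\) — namely \(w \in \dot\X\) with the right orthogonality, \(\partial_x\mc L_\phi w = \lambda w - f \in \dot\X\), and \(\mc L_\phi w|_{x_{B,c}} = 0\) — which is built into the construction together with \eqref{Coercive}.

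I expect the main obstacle to be exactly this range condition, and within it the bookkeeping of the orthogonality subspaces: the four cases \(B=0,c=1\) and \(B=\frac14,c\in\{1,0,-1\}\) have \(\dot\X\) cut out by different numbers of constraints, and \(\mc L_\phi^{-1}\) lives on \(L^2(I)\) in some cases and on \(L^2_\perp(I)\) in others, so the projections in the fixed-point argument and the verification that the solution actually lands in \(\dot\X^1\) (in particular that \(\mc L_\phi w\) vanishes at the left endpoint \(-x_{B,c}\) as well, which is forced by \(\partial_x\mc L_\phi w \in \dot\X\)) must be carried out case by case. Everything else is routine once the solvability lemmas of the preceding subsections and the compact embedding of Lemma~\ref{lem:compact} are in hand.
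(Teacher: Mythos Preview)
Your proposal is correct and follows the same Lumer--Phillips/Fredholm strategy as the paper: dissipativity from \eqref{NonnegativeGoodness}, then a Fredholm alternative to obtain the range condition, with the homogeneous equation ruled out again by \eqref{NonnegativeGoodness}.

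The organizational difference is where the compactness enters. You reformulate the resolvent equation as a fixed-point problem on \(L^2(I)\) built from \(\mc L_\phi^{-1}\) and the integration operator, and then argue compactness at that level; this forces you to shuttle between \(L^2(I)\), \(L^2_\perp(I)\), and \(\dot\X\), and the ``harmless projection'' you insert when \(B=0\) or \(c=0\) is precisely where the case-by-case bookkeeping you anticipate actually lives. The paper instead applies Fredholm directly on \(\dot\X\): it observes that \((\partial_x\mc L_\phi)^{-1}\colon \dot\X\to\dot\X^1\) is well defined, invokes Lemma~\ref{lem:compact} to conclude that \((\partial_x\mc L_\phi)^{-1}\colon \dot\X\to\dot\X\) is compact, and then runs Fredholm on \(1 - (\partial_x\mc L_\phi)^{-1}\). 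This bypasses the orthogonality bookkeeping entirely, since the constraints defining \(\dot\X\) are already built into the space on which everything is posed. Your route works but is longer; the paper's is the cleaner packaging of the same ingredients, and since you already cite Lemma~\ref{lem:compact} at the end you might as well use it up front.
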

\begin{proof}
It suffices to show that \(1\not\in \sigma(\partial_x\mc L_\phi)\). By construction the inverse operator \((\partial_x\mc L_\phi)^{-1}\colon \dot\X\rightarrow \dot\X^1\) is well-defined. From Lemma~\ref{lem:compact} the embedding \(\dot \X^1\subset \dot \X\) is compact and hence \((\partial_x\mc L_\phi)^{-1}\colon \dot\X\rightarrow \dot\X\) is a compact operator on \(\dot\X\). From the estimate \eqref{NonnegativeGoodness} there does not exist a non-trivial solution \(w\in \dot\X^1\) to the homogeneous equation
\[
(1 - \partial_x\mc L_\phi)w = 0,
\]
so applying the Fredholm alternative to the operator \(1 - (\partial_x\mc L_\phi)^{-1}\) we see that \(1\not\in \sigma(\partial_x\mc L_\phi)\) as required.
\end{proof}
\medskip

We note that using the semigroup \(S(t)\) we may construct a mild solution \(v\in C([0,T);\dot \X)\) to the linear equation
\begin{equation}\label{Basic2}
\begin{cases} 
v_t = \left( \mathcal{L}_{\phi} v  \right)_x + f,\vspace{0.1cm}\\
v(0) = v_0,\vspace{0.1cm}\\
v\big|_{\partial I} = 0,\vspace{0.1cm}\\
 \mc L_{\phi} v \big|_{x=x_{B,c}}= 0,
\end{cases}
\end{equation}
whenever \(v_0\in \dot \X\) and \(f\in L^1([0,T);\dot \X)\) using the Duhamel formula
\[
v(t) = S(t)v_0 + \int_0^t S(t - s)f(s)\,ds.
\]
Further, the solution \(v\) satisfies the energy estimate
\[
\frac d{dt}\|v\|_{\dot \X}^2 + \Upsilon^2 \leq 2|(f,v)_{\dot \X}|
\]
where
\[
\Upsilon =  \lim\limits_{x\downarrow -x_{B,c}}\mc L_\phi w.
\]
This construction may then be straightforwardly extended to handle initial data in the energy space \(\X\) using a simple modulation argument.

\medskip
\begin{rem}\label{rem:Distribs}
We remark that the solution to \eqref{Basic2} does not quite extend to a solution to the linearized equation on \(\R\) in the sense of distributions. Indeed, if we take \(\mc E\colon \dot\X\rightarrow \mc D'(\R)\) to be extension by zero, then taking \(V = \mc E v\) and \(F = \mc Ef\), as distributions on \(\R\) we obtain
\[
V_t = (\mc L_\phi V)_x + F - \Upsilon \delta_{x = -x_{B,c}}
\]
\end{rem}
\medskip

\subsection{The operators for Linearized \eqref{degNLS}}

When $v=0$, a somewhat tedious calculation reveals that linearizing about $\phi$ in the case of degenerate NLS results in an operator of the form
\begin{equation}
\label{HdegNLS}
\mathcal{H}_0 = \left[  \begin{array}{cc}
0 &    -(\mathcal{L}_{\phi} + 2c) \\
\mathcal{L}_{\phi} & 0
\end{array} \right],
\end{equation}
when acting on a perturbation $w = w_1 + i w_2$, where $\mathcal{L}_{\phi}$ is as above.  Acting on $w, \overline{w}$, we have
\begin{equation}
\label{HdegNLSalt}
\tilde{\mathcal{H}}_0 = -i  \left[  \begin{array}{cc}
\phi \partial_x^2 (\phi \cdot) - \phi_{xx} +2 \phi^2 - c & \partial_x (\phi \phi_x) + \phi^2  \\
-( \partial_x (\phi \phi_x) + \phi^2)  &- (\phi \partial_x^2 (\phi \cdot)- \phi_{xx}  +2 \phi^2 - c)
\end{array} \right].
\end{equation}

When $v \neq 0$, the complexity of the phase in $Q$ makes thing somewhat more complicated.  In particular, taking $u(x,t) = (\phi(x-vt) + w(x-vt,t)) e^{i \theta ( x-vt) - i ct}$ with $w = w_1 + i w_2$, we have the linearized equation for the $(w_1,w_2)$ system as
\begin{equation}
\label{HdegNLSaltv}
\mathcal{H}_v =  \mathcal{H}_0 + v\left[  \begin{array}{cc}
- \partial_x  - \frac{\phi'}{\phi}    &  0  \\
 \frac{v}{\phi^2} &   - \partial_x  + \frac{\phi'}{\phi}  
\end{array} \right].
\end{equation}
The underlying structure of these matrix non-self-adjoint will be a topic for future work, but results analogous to those in the works of for instance Schlag et al should be possible \cite{Sch0,Sch1,Sch2,Sch3,Sch4}.

\section{Some Numerical Analysis of the degenerate NLS and KDV equations}

\label{sectionnumericalanalysis}

We consider a variation on \eqref{degNLS} with $p=4$, which comes from \cite{colliander2013behavior}.  In particular, we want to study compacton solutions for
\begin{equation}\label{eq:nls1}
- iv_t=  |v|^2 v +  \bar{v} \partial_x ( v \partial_x v) , \ \ v: \R \to \C.
\end{equation}
Here we use viscosity-type methods as motivated by the work \cite{MR2967120}, but for prior numerical works on these types of models, see works such as the use of Pade Approximants from Mihaila-Cardenas-Cooper-Saxena \cite{MR2787498} and Rosenau-Hyman \cite{HR}.

The stability of stationary solutions can be studied numerically in this equation for now in limited cases.  To handle all the cases for which we have derived solutions above, more sophisticated numerical tools will need to be developed.  Here, we treat only non-degenerate periodic solutions (when $B < 0$ or $B=0$, $A<0$) where the degeneracy of the elliptic operator does not arise generally.  However, our methods of direct numerical simulation are relatively sensitive, hence for compactons we cannot treat the cases $c \neq 0$ due to the strongly singular phase involved in generating traveling waves in the NLS setting.  To discretize \eqref{eq:nls1} in the non-degenerate case, motivated by the schemes used to solve degenerate equations in \cite{liu2017asymmetry}, we use a simple centered finite difference scheme with periodic boundary conditions.  Once we have generated the finite difference spatial operator for \eqref{eq:nls1}, we integrate in time using the stiff solver {\it ode15s} in {\it Matlab}.  The results are reported in Figure \ref{f:nlscomp}.

\begin{figure}[t]
\begin{center}
\includegraphics[width=.45\textwidth]{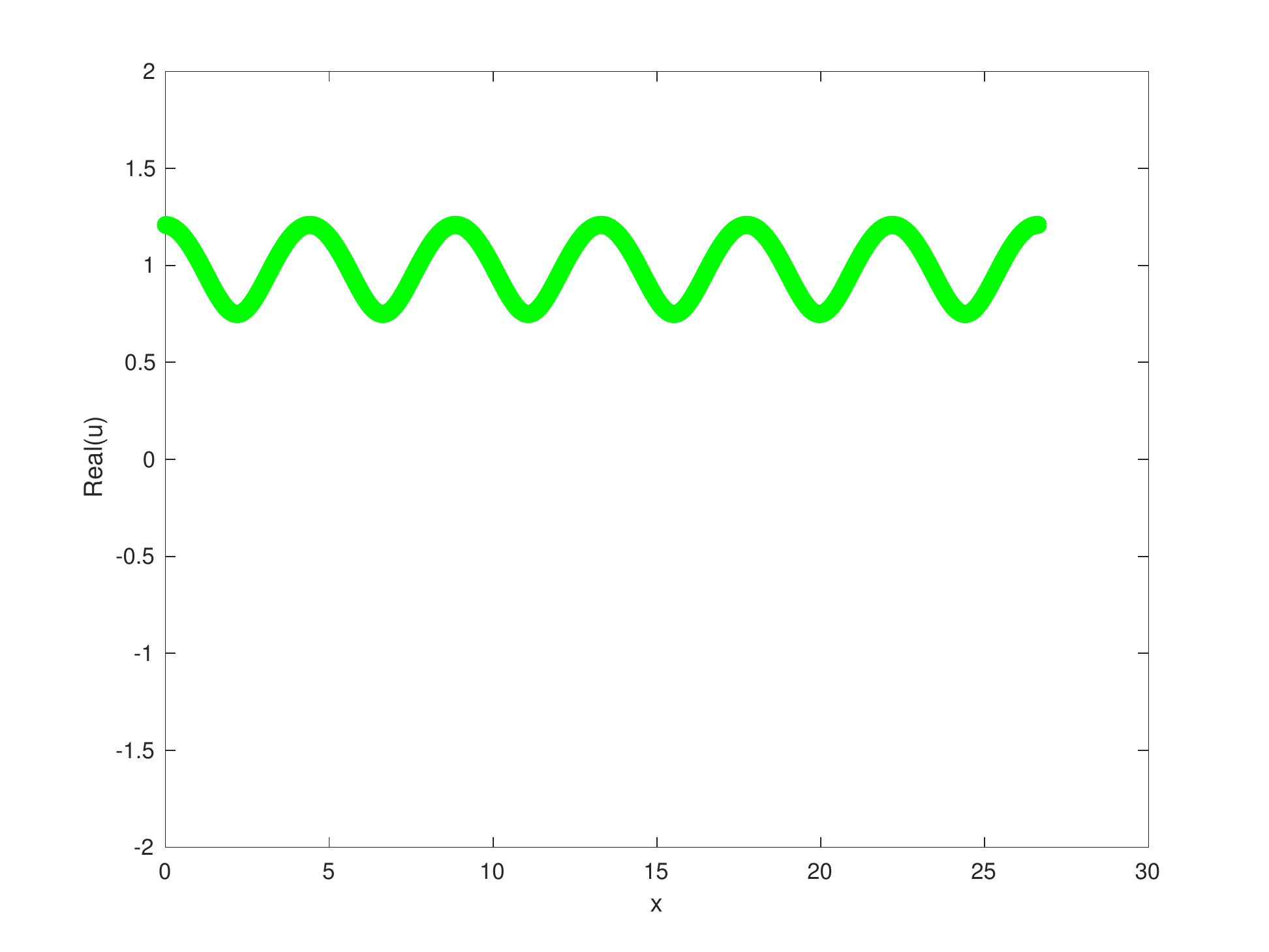}
\includegraphics[width=.45\textwidth]{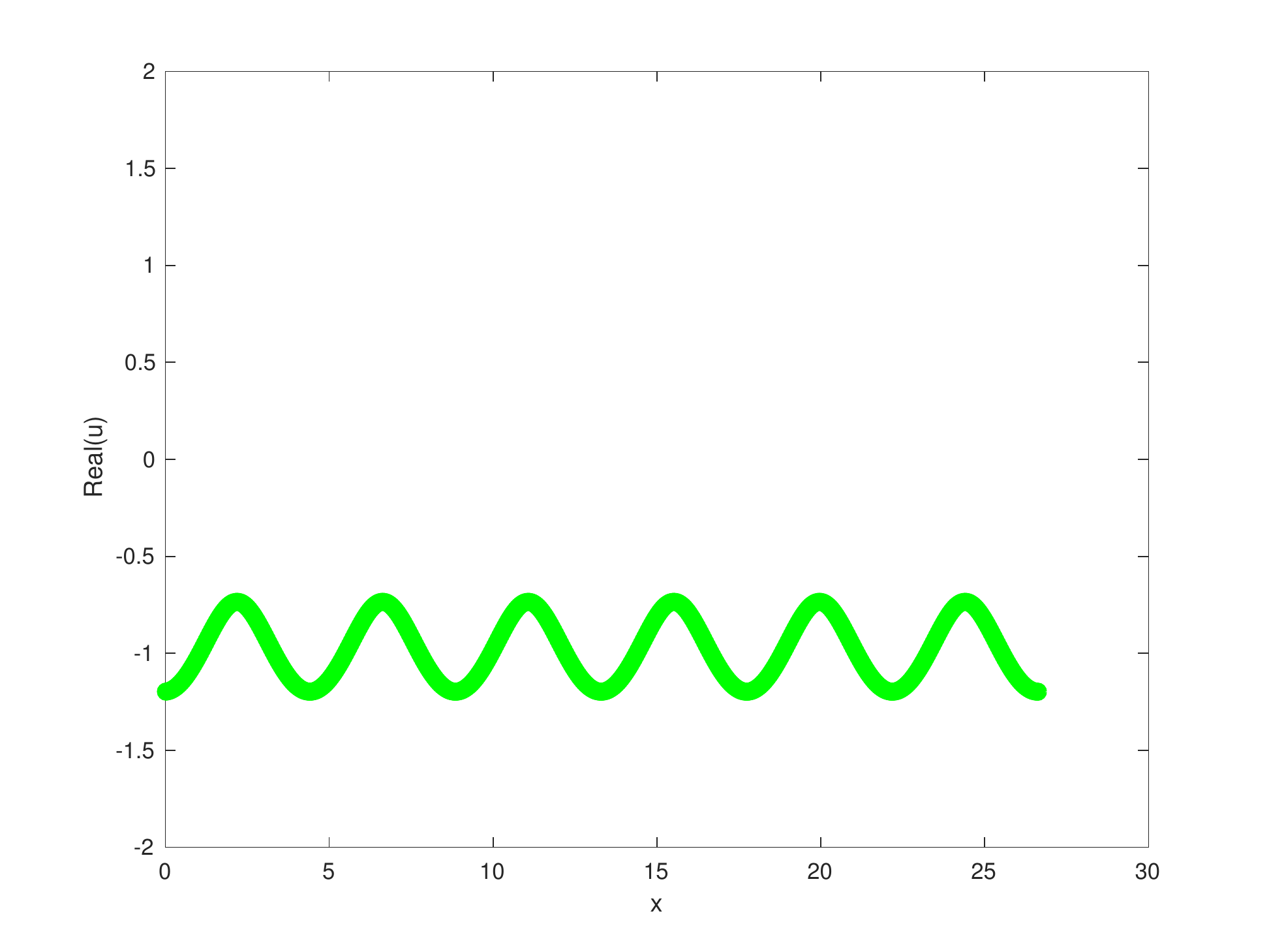}  \\
\includegraphics[width=.45\textwidth]{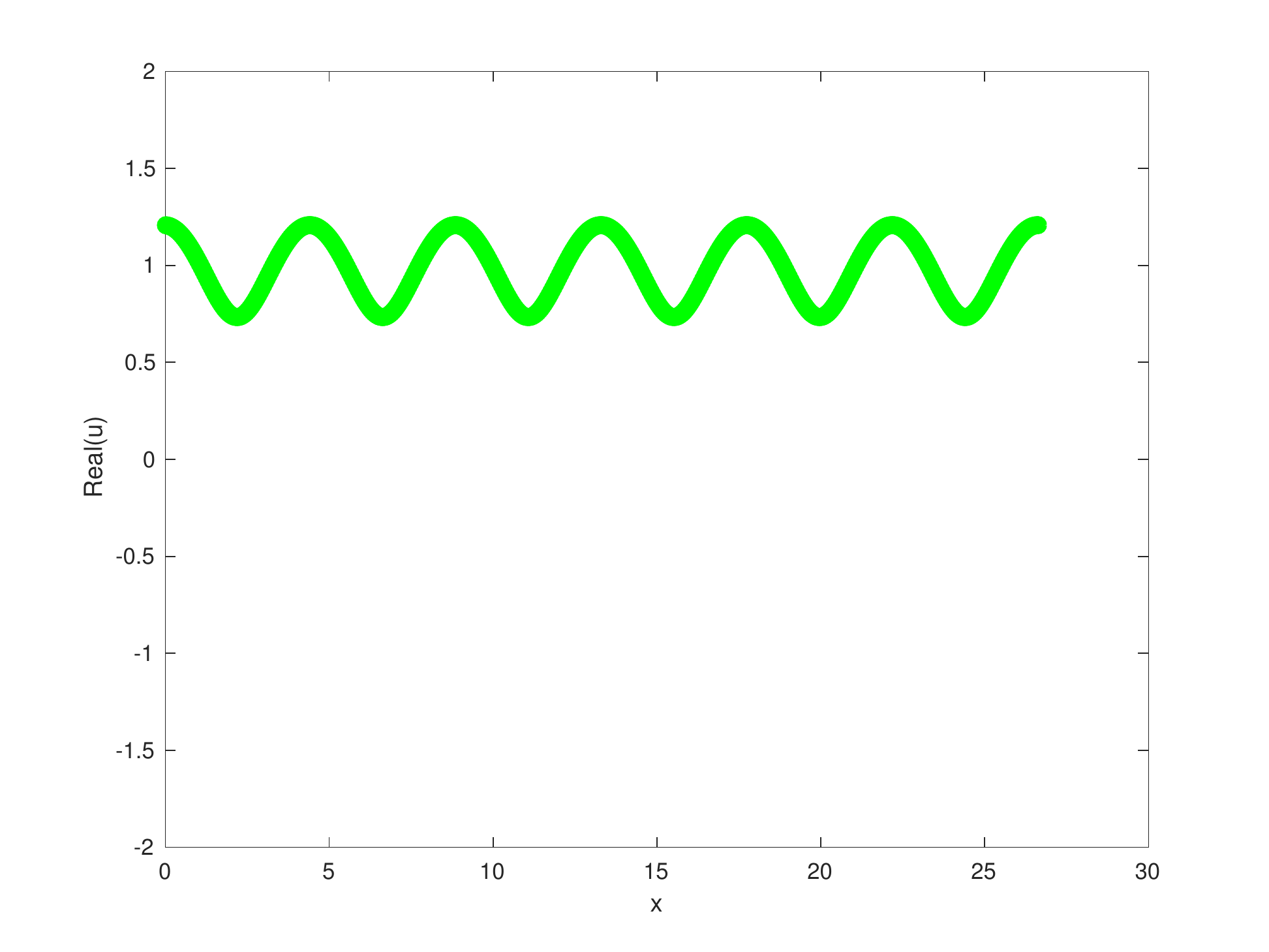}
\includegraphics[width=.45\textwidth]{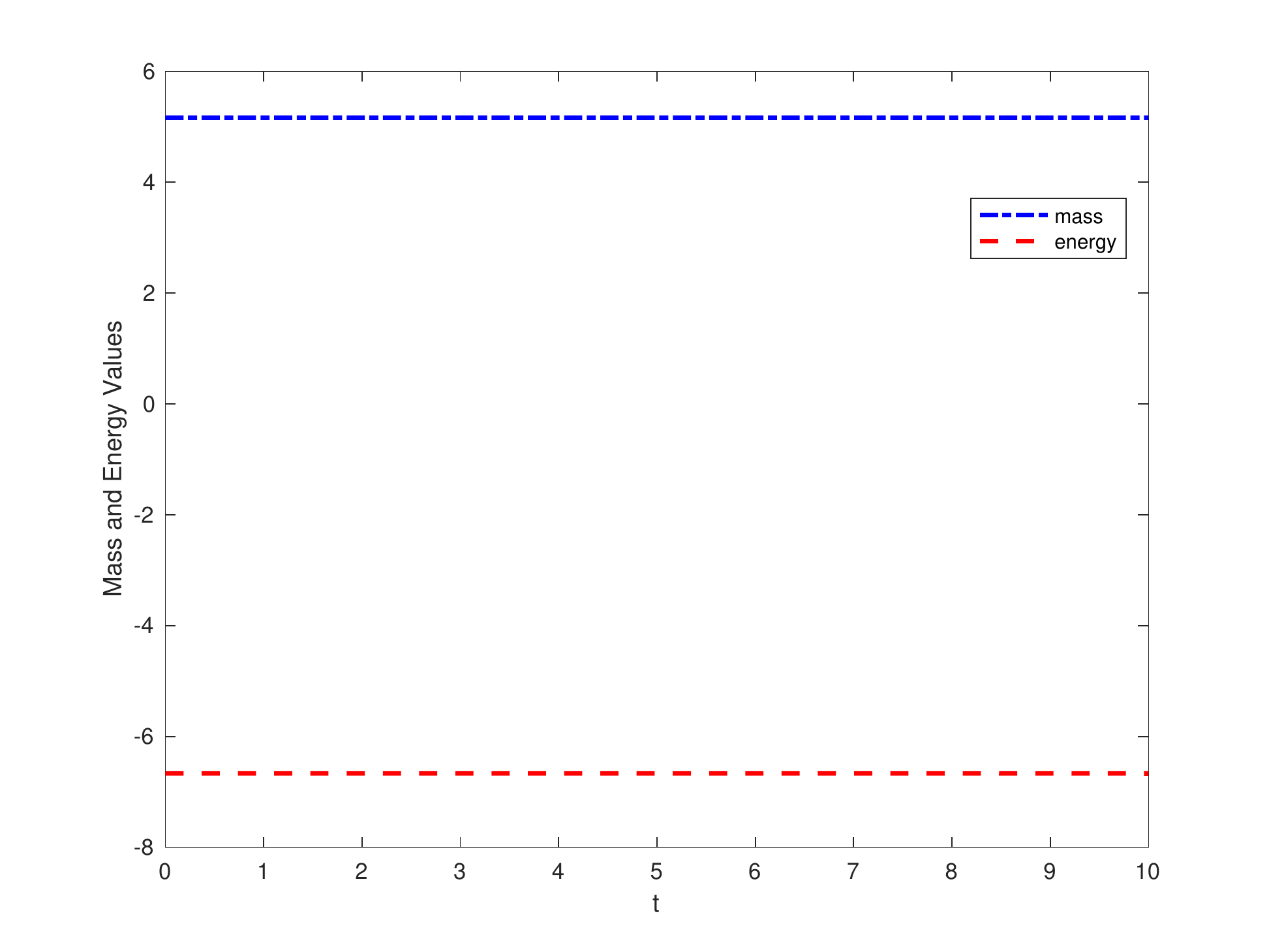}

\caption{Snapshots of the real part of solution of regularized \eqref{eq:nls1} with stationary periodic solutions ($v=0$, $c=1$, $B=-.2$) initial data at time $t=0$ (top left), time $t=\pi$ (top right), and time $t = 2 \pi$ (bottom left). Also, we plot the Mass and Hamiltonian energy over time (bottom right).}
\label{f:nlscomp}
\end{center}
\end{figure}

As an alternative,
naively taking the Madelung transformation of \eqref{eq:nls1}, $v = \sqrt{\rho} e^{i \theta}$, for this equation, we arrive at
\begin{align}
& \rho_t + 2 \partial_x ( \rho^2 \theta_x) = 0, \\
& \theta_t + 2 \rho \theta_x^2 =  \rho_{xx} +  \rho.
\end{align}
Defining $u = 2 \rho \theta_x$ as the designated flow velocity, we have
\begin{align}
\label{hydro:NLS}
& \rho_t +  \partial_x ( \rho u) = 0, \\
& u_t + 3 u \partial_x u =  \rho(  \rho_{xxx} + 2 \rho_x). \notag
\end{align}
We can numerically solve \eqref{hydro:NLS} to observe transport with relative ease by using a standard centered finite difference approximation and stiff numerical time integration schemes in {\it Matlab}.  The results are reported in Figure \ref{f:nlshydro}.

\begin{figure}[t]
\begin{center}
\includegraphics[width=.45\textwidth]{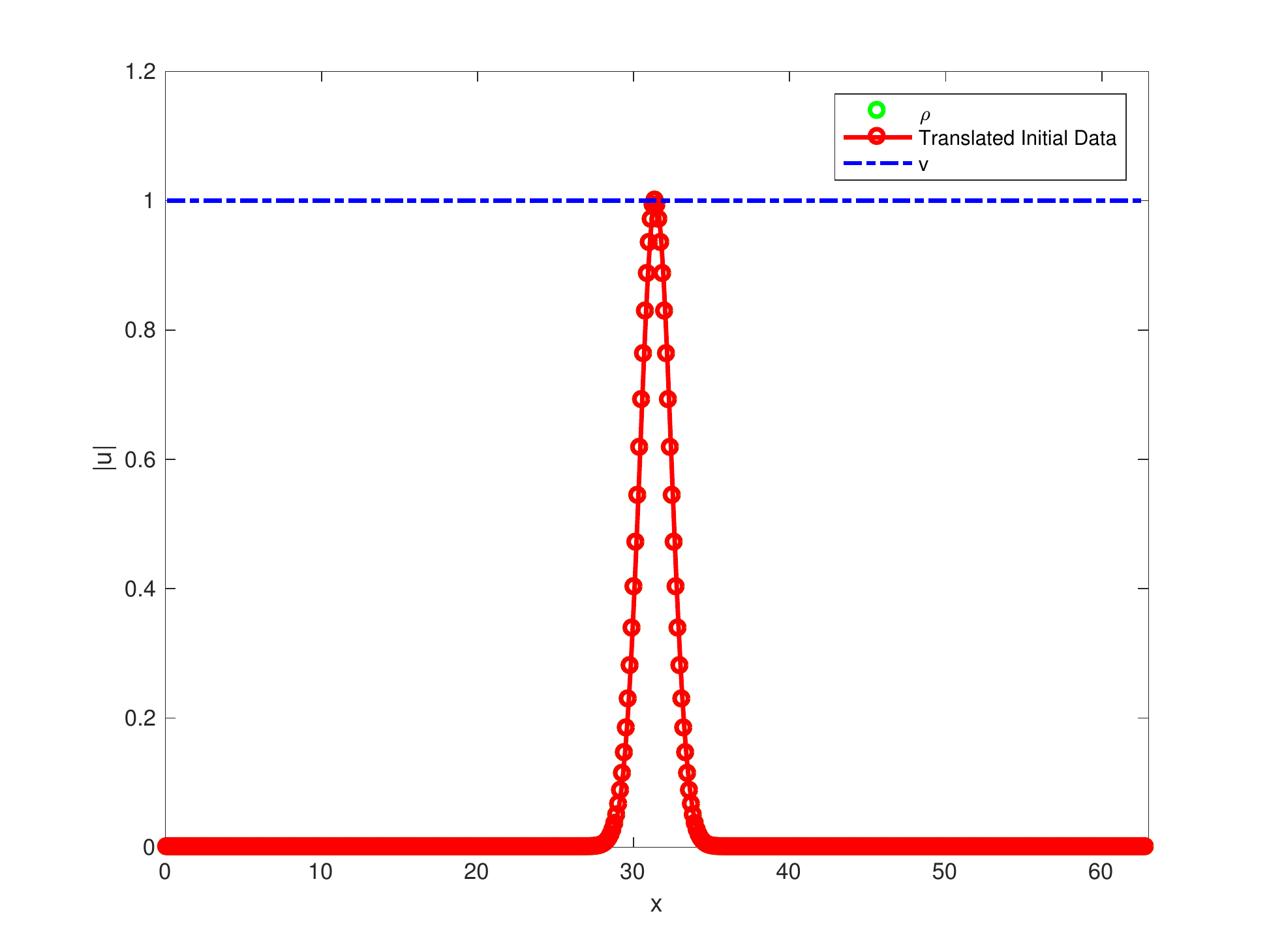}
\includegraphics[width=.45\textwidth]{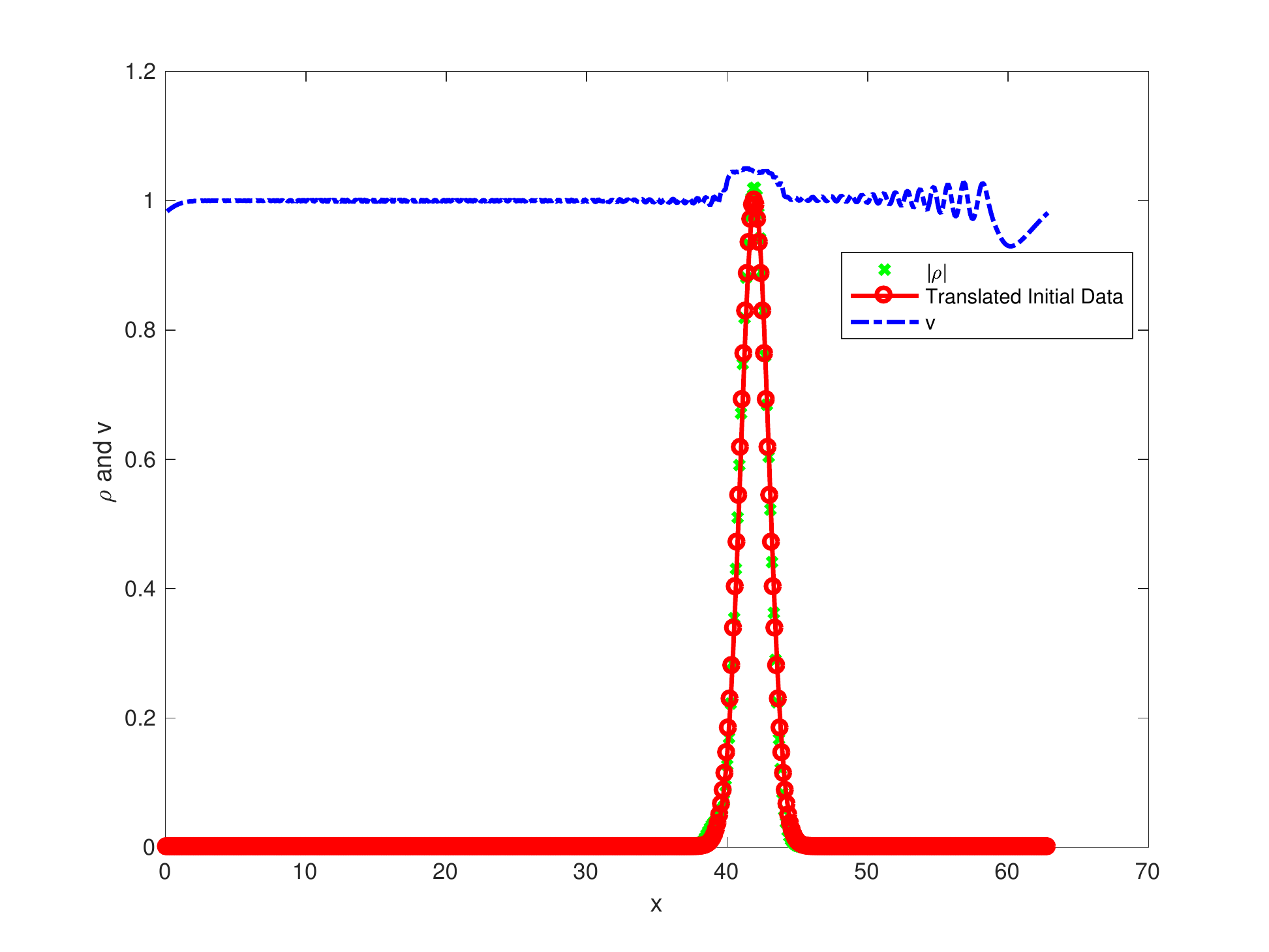} \\
\includegraphics[width=.45\textwidth]{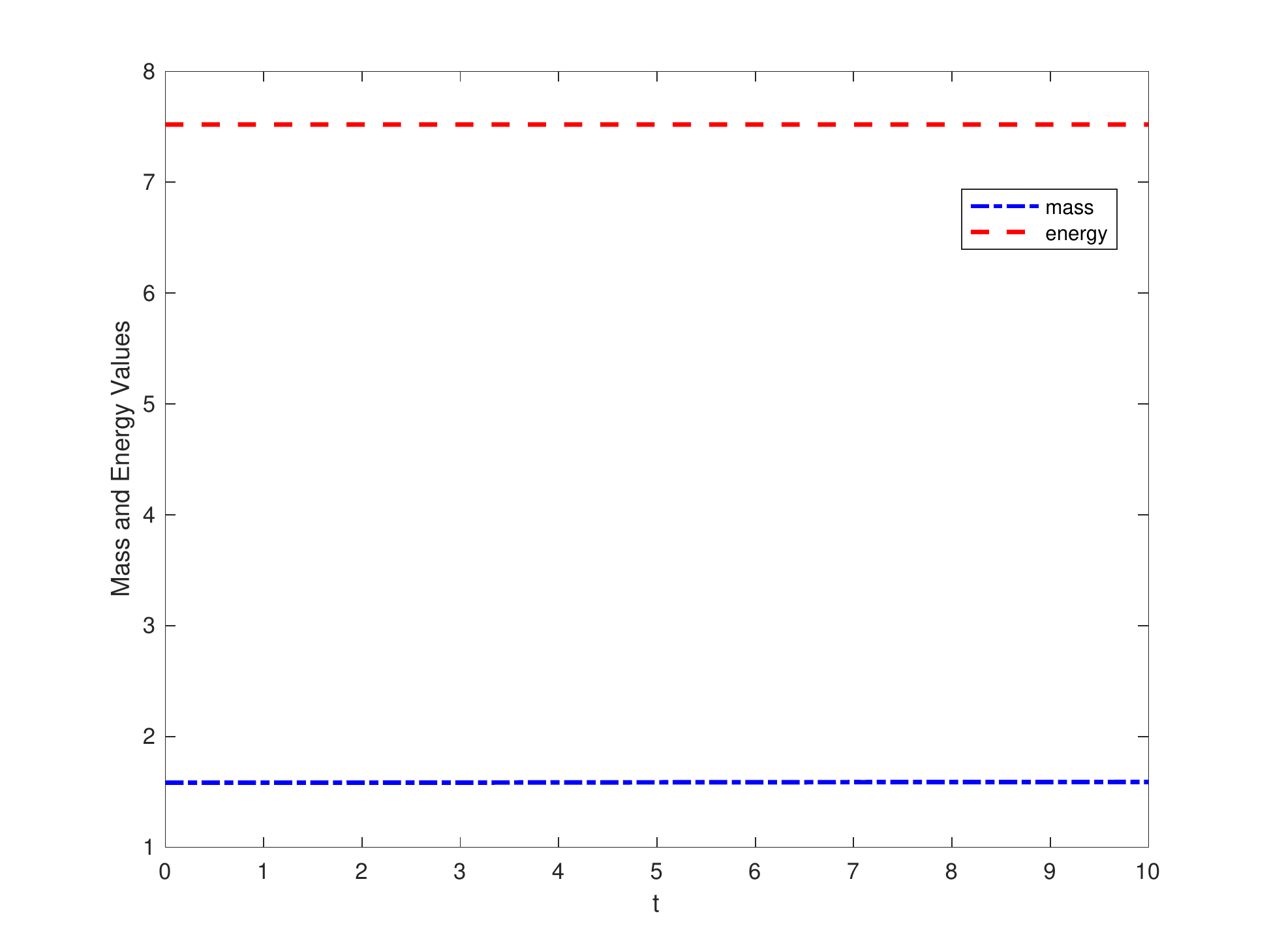}
\caption{Snapshots of the solution of regularized \eqref{hydro:NLS} with Gaussian initial data for $\rho$ and constant initial data $v_0 = 1$ at time $t=0$ (left) and time $t=10$ (right) with a comparison to the time translated initial data. Also, we plot the Mass and Hamiltonian energy over time (bottom).}
\label{f:nlshydro}
\end{center}
\end{figure}

We consider a variation on \eqref{degKdV} with $p=4$
\begin{equation}
\label{degKdV4}
\partial_t u + \partial_x (u \partial_x (u \partial_x u) + u^{3}) = 0,
\end{equation}
which has been proposed and studied in the work of Cooper-Shepard-Sodano \cite{MR1376975}.  Similar style degenerate dispersion operators have been developed by for instance Hunter-Saxon \cite{MR1135995}, etc.  To discretize \eqref{eq:nls1}, motivated by the schemes used in \cite{MR2967120}, we use a pseudospectral scheme with regularized derivatives of the form
\begin{equation}
\xi \to \frac{\xi}{1 + \nu \xi^4}
\end{equation}
for $\nu$ chosen sufficiently small (generally $\nu = 10^{-4}$ unless otherwise stated).  Once we have generated the regularized pseudospectral spatial operator for \eqref{eq:nls1}, we integrate in time using the stiff solver {\it ode15s} in {\it Matlab}.  The results are reported in Figures \ref{f:KdVcomp} and \ref{f:KdVcomppert}.

\begin{figure}[t]
\begin{center}
\includegraphics[width=.45\textwidth]{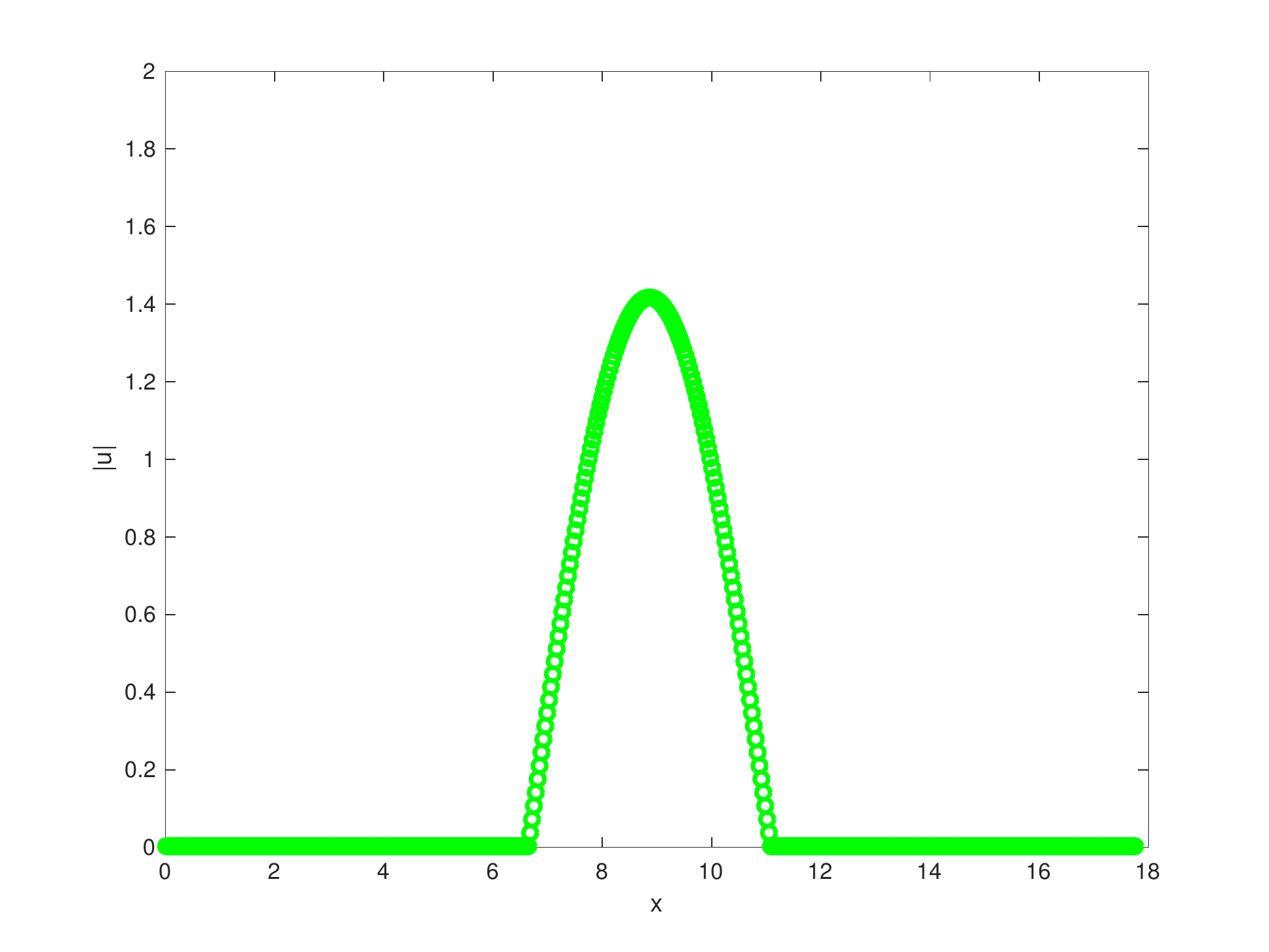}
\includegraphics[width=.45\textwidth]{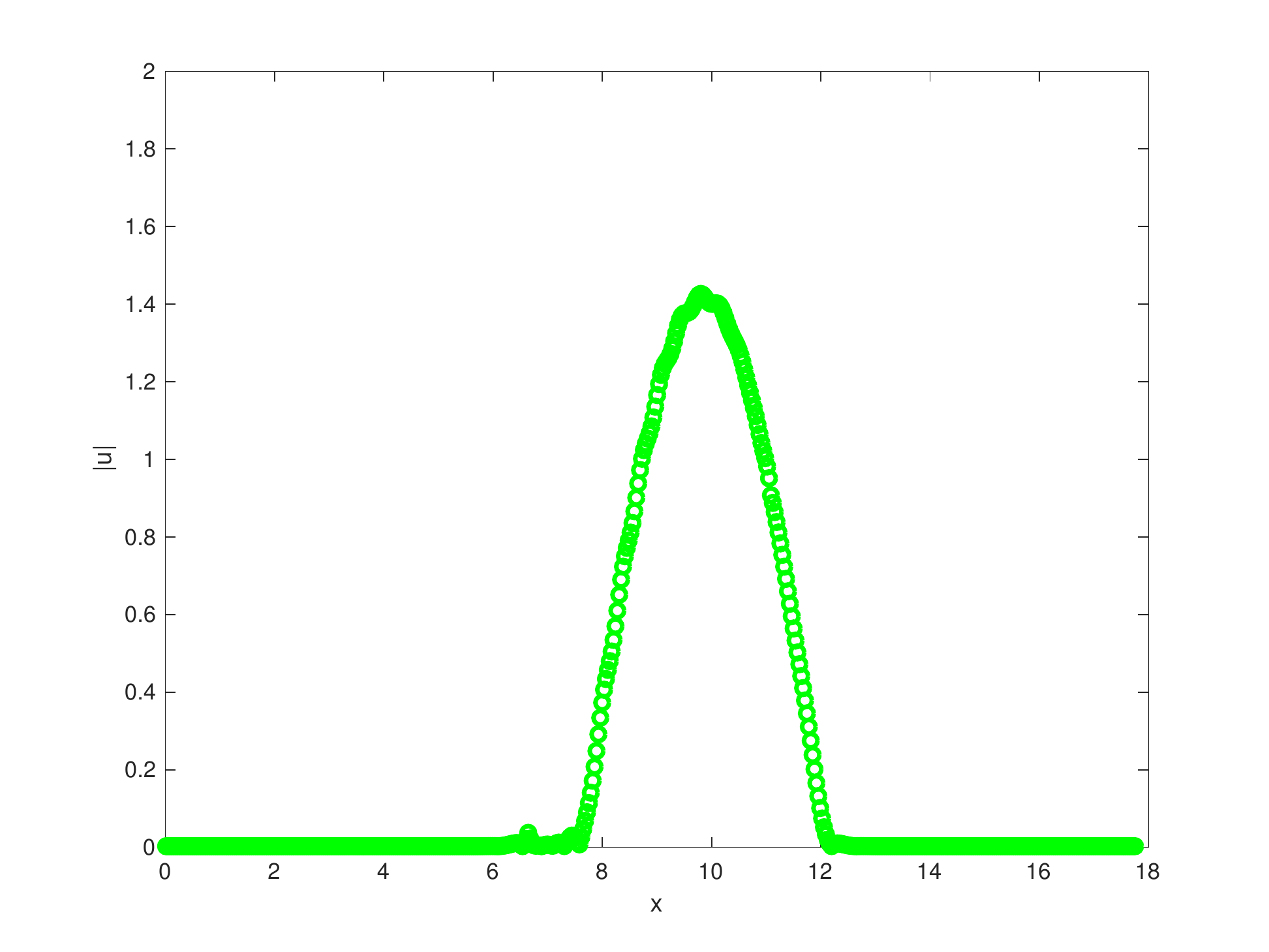} \\
\includegraphics[width=.45\textwidth]{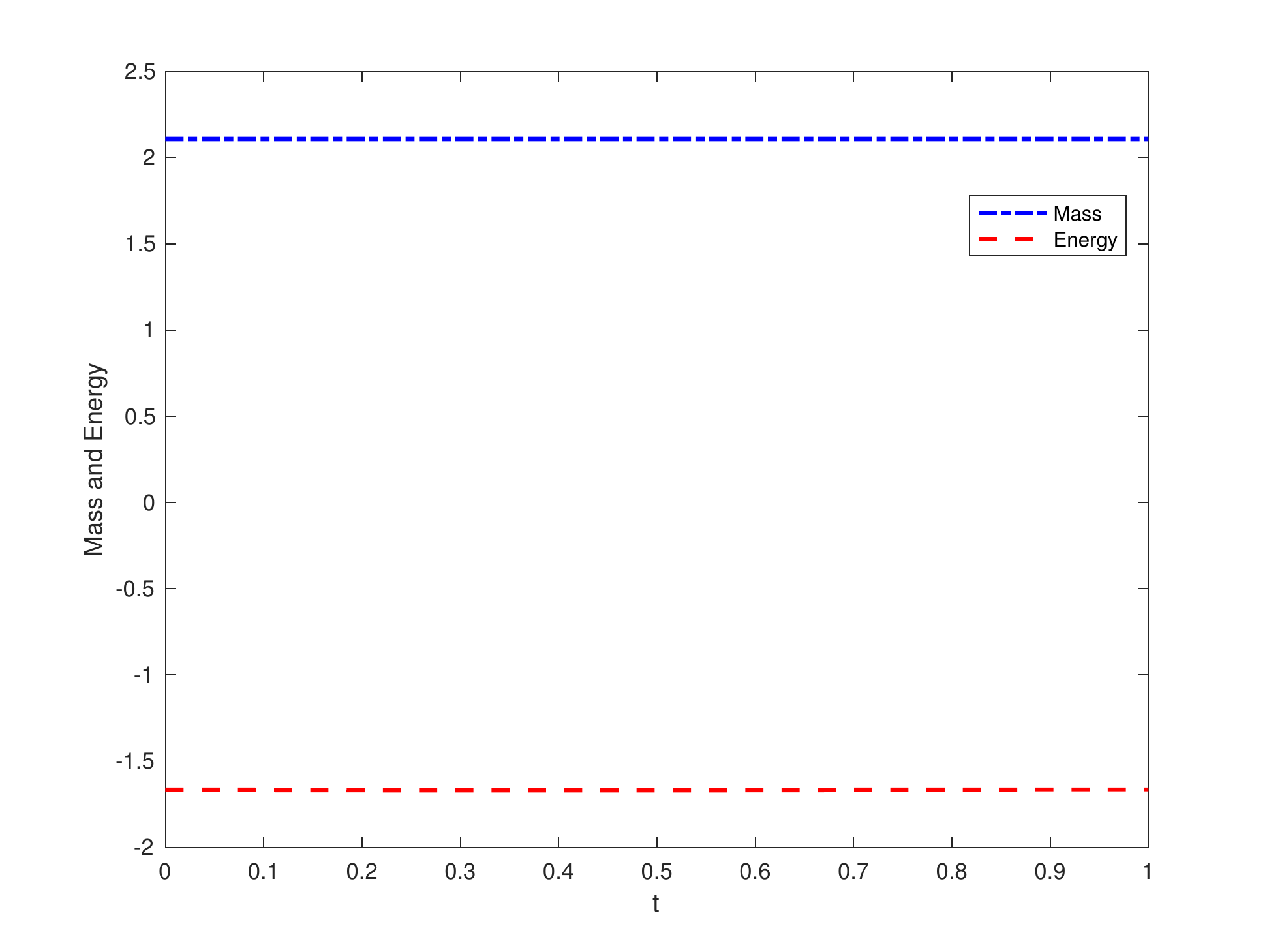}
\caption{Snapshots of the solution of \eqref{degKdV4} with  initial data given by the compacton with $c=1$, $B=0$, $A=0$ at time $t=0$ (left) and time $t=1$ (right). Also, we plot the Mass and Hamiltonian energy over time (bottom).}
\label{f:KdVcomp}
\end{center}
\end{figure}

\begin{figure}[t]
\begin{center}
\includegraphics[width=.45\textwidth]{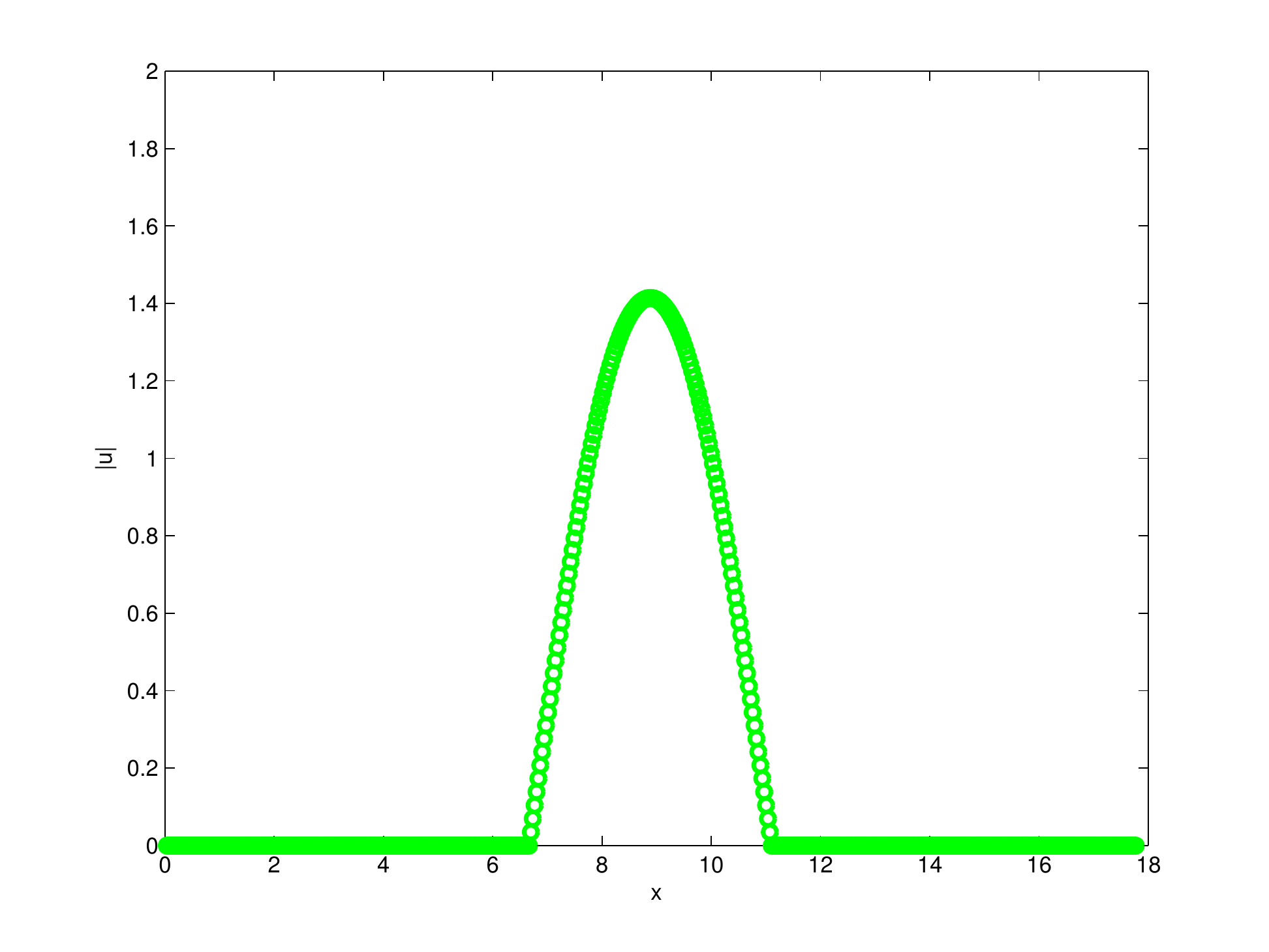}
\includegraphics[width=.45\textwidth]{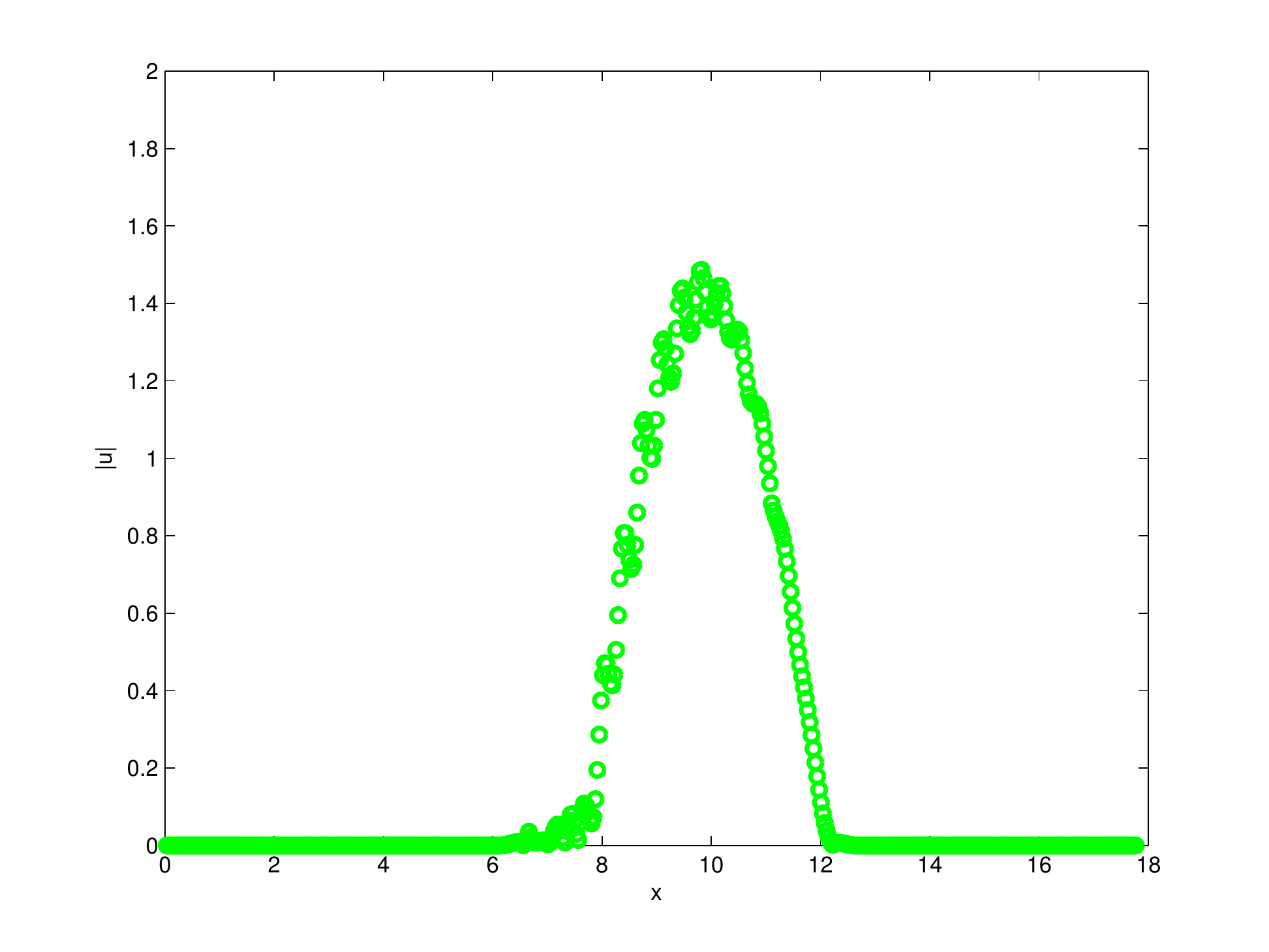} \\
\includegraphics[width=.45\textwidth]{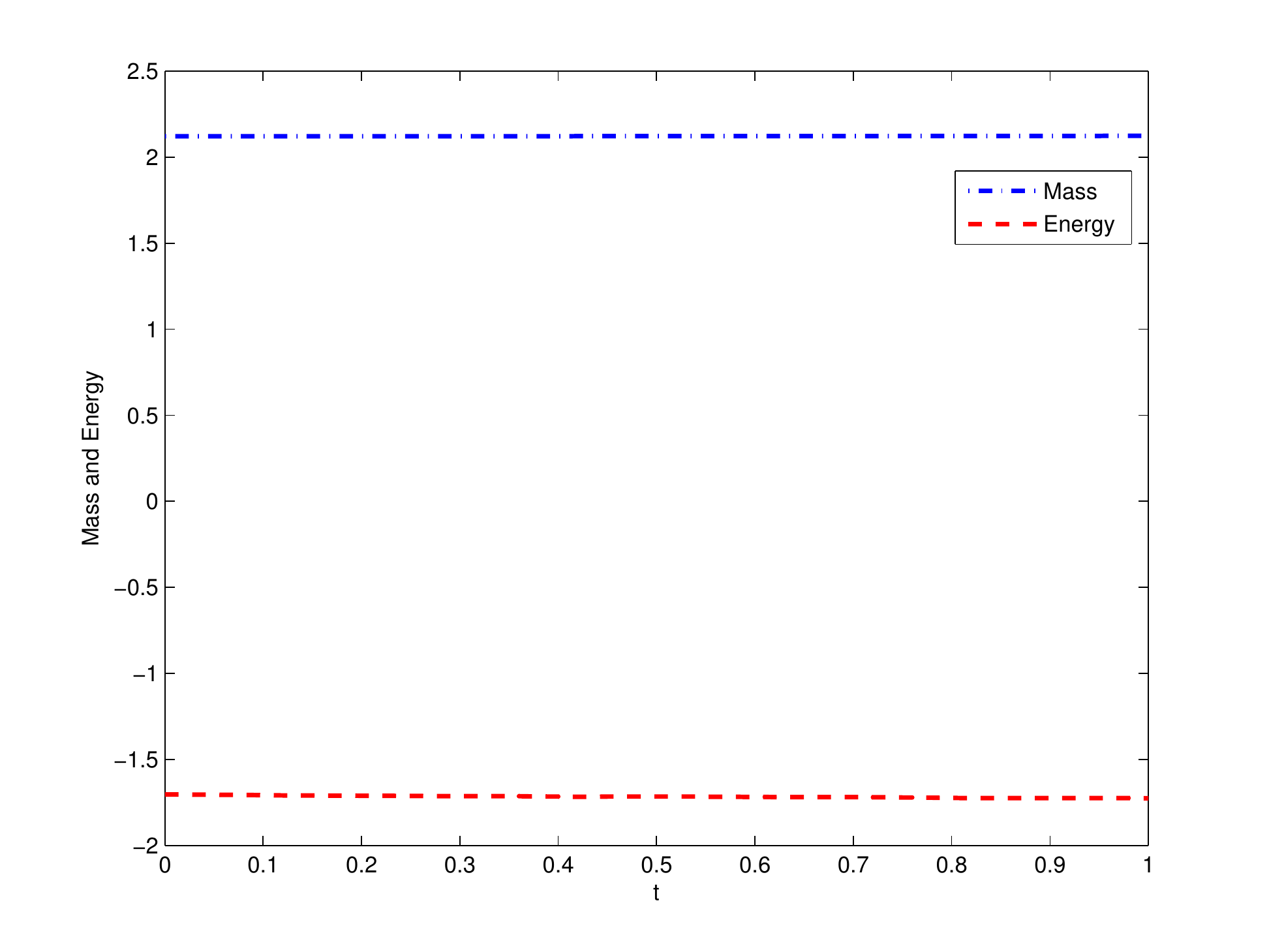}
\caption{Snapshots of the solution of \eqref{degKdV4} with  initial data given by a perturbation of the compacton, $\phi_{0,1}$, with $c=1$, $B=0$, $A=0$.  The specific initial data is $u_0 = \phi_{0,1} (x-x_0) (1 + .01 (x-x_0)^2 \phi_{0,1}^3 (x-x_0))$.  The solutions are reported at time $t=0$ (left) and time $t=1$ (right). Also, we plot the Mass and Hamiltonian energy over time (bottom).}
\label{f:KdVcomppert}
\end{center}
\end{figure}

\bibliographystyle{abbrv}
\bibliography{KdV}

\begin{thebibliography}{10}

\bibitem{MR2967120}
D.~M. Ambrose, G.~Simpson, J.~D. Wright, and D.~G. Yang.
\newblock Ill-posedness of degenerate dispersive equations.
\newblock {\em Nonlinearity}, 25(9):2655--2680, 2012.

\bibitem{MR695535}
H.~Berestycki and P.-L. Lions.
\newblock Nonlinear scalar field equations. {I}. {E}xistence of a ground state.
\newblock {\em Arch. Rational Mech. Anal.}, 82(4):313--345, 1983.

\bibitem{MR2772627}
F.~Betancourt, R.~B\"urger, K.~H. Karlsen, and E.~M. Tory.
\newblock On nonlocal conservation laws modelling sedimentation.
\newblock {\em Nonlinearity}, 24(3):855--885, 2011.

\bibitem{MR2599457}
J.~Biello and J.~K. Hunter.
\newblock Nonlinear {H}amiltonian waves with constant frequency and surface
  waves on vorticity discontinuities.
\newblock {\em Comm. Pure Appl. Math.}, 63(3):303--336, 2010.

\bibitem{MR1998942}
J.~L. Bona, S.~M. Sun, and B.-Y. Zhang.
\newblock A nonhomogeneous boundary-value problem for the {K}orteweg-de {V}ries
  equation posed on a finite domain.
\newblock {\em Comm. Partial Differential Equations}, 28(7-8):1391--1436, 2003.

\bibitem{Sch1}
M.~Burak~Erdo{\^g}an and W.~Schlag.
\newblock Dispersive estimates for schr{\"o}dinger operators in the presence of
  a resonance and/or an eigenvalue at zero energy in dimension three: Ii.
\newblock {\em Journal d'Analyse Math{\'e}matique}, 99(1):199--248, 2006.

\bibitem{MR3582261}
M.~A. Caicedo and B.-Y. Zhang.
\newblock Well-posedness of a nonlinear boundary value problem for the
  {K}orteweg--de {V}ries equation on a bounded domain.
\newblock {\em J. Math. Anal. Appl.}, 448(2):797--814, 2017.

\bibitem{MR1234453}
R.~Camassa and D.~D. Holm.
\newblock An integrable shallow water equation with peaked solitons.
\newblock {\em Phys. Rev. Lett.}, 71(11):1661--1664, 1993.

\bibitem{CMMT}
H.~Christianson, J.~Marzuola, J.~Metcalfe, and M.~Taylor.
\newblock Nonlinear bound states on weakly homogeneous spaces.
\newblock {\em Communications in Partial Differential Equations}, 39(1):34--97,
  2014.

\bibitem{colliander2013behavior}
J.~E. Colliander, J.~L. Marzuola, T.~Oh, and G.~Simpson.
\newblock Behavior of a model dynamical system with applications to weak
  turbulence.
\newblock {\em Experimental Mathematics}, 22(3):250--264, 2013.

\bibitem{MR1376975}
F.~Cooper, H.~Shepard, and P.~Sodano.
\newblock Solitary waves in a class of generalized {K}orteweg-de {V}ries
  equations.
\newblock {\em Phys. Rev. E (3)}, 48(5):4027--4032, 1993.

\bibitem{MR1811323}
H.-H. Dai and Y.~Huo.
\newblock Solitary shock waves and other travelling waves in a general
  compressible hyperelastic rod.
\newblock {\em R. Soc. Lond. Proc. Ser. A Math. Phys. Eng. Sci.},
  456(1994):331--363, 2000.

\bibitem{el2017dispersive}
G.~El, M.~Hoefer, and M.~Shearer.
\newblock Dispersive and diffusive-dispersive shock waves for nonconvex
  conservation laws.
\newblock {\em SIAM Review}, 59(1):3--61, 2017.

\bibitem{bcalc4}
D.~Grieser.
\newblock Basics of the b-calculus.
\newblock In {\em Approaches to singular analysis}, pages 30--84. Springer,
  2001.

\bibitem{MR2180464}
T.~Hmidi and S.~Keraani.
\newblock Blowup theory for the critical nonlinear {S}chr\"odinger equations
  revisited.
\newblock {\em Int. Math. Res. Not.}, (46):2815--2828, 2005.

\bibitem{MR2254610}
J.~Holmer.
\newblock The initial-boundary value problem for the {K}orteweg-de {V}ries
  equation.
\newblock {\em Comm. Partial Differential Equations}, 31(7-9):1151--1190, 2006.

\bibitem{hunter1995asymptotic}
J.~K. Hunter.
\newblock Asymptotic equations for nonlinear hyperbolic waves.
\newblock In {\em Surveys in applied mathematics}, pages 167--276. Springer,
  1995.

\bibitem{HunterPrivate}
J.~K. Hunter.
\newblock Private communication.
\newblock 2016.

\bibitem{MR1135995}
J.~K. Hunter and R.~Saxton.
\newblock Dynamics of director fields.
\newblock {\em SIAM J. Appl. Math.}, 51(6):1498--1521, 1991.

\bibitem{Sch2}
J.~Krieger and W.~Schlag.
\newblock Stable manifolds for all monic supercritical focusing nonlinear
  schr{\"o}dinger equations in one dimension.
\newblock {\em Journal of the American Mathematical Society}, 19(4):815--920,
  2006.

\bibitem{liu2017asymmetry}
J.-G. Liu, J.~Lu, D.~Margetis, and J.~L. Marzuola.
\newblock Asymmetry in crystal facet dynamics of homoepitaxy by a continuum
  model.
\newblock {\em arXiv preprint arXiv:1704.01554}, 2017.

\bibitem{majda1988canonical}
A.~Majda, R.~Rosales, and M.~Schonbek.
\newblock A canonical system of lntegrodifferential equations arising in
  resonant nonlinear acoustics.
\newblock {\em Studies in Applied Mathematics}, 79(3):205--262, 1988.

\bibitem{bcalc3}
R.~Mazzeo.
\newblock Elliptic theory of differential edge operators i.
\newblock {\em Communications in Partial Differential Equations},
  16(10):1615--1664, 1991.

\bibitem{bcalc1}
R.~B. Melrose.
\newblock {\em Pseudodifferential operators, corners and singular limits}.
\newblock American Mathematical Society, 1990.

\bibitem{bcalc2}
R.~B. Melrose.
\newblock {\em The Atiyah-Patodi-singer index theorem}, volume~4.
\newblock Citeseer, 1993.

\bibitem{bcalc5}
R.~B. Melrose.
\newblock Differential analysis on manifolds with corners, 1996.

\bibitem{MR2787498}
B.~Mihaila, A.~Cardenas, F.~Cooper, and A.~Saxena.
\newblock Stability and dynamical properties of {C}ooper-{S}hepard-{S}odano
  compactons.
\newblock {\em Phys. Rev. E (3)}, 82(6):066702, 11, 2010.

\bibitem{nagy1941integralungleichungen}
B.~v.~S. Nagy.
\newblock {\"U}ber integralungleichungen zwischen einer funktion und ihrer
  ableitung.
\newblock {\em Acta Univ. Szeged. Sect. Sci. Math}, 10:64--74, 1941.

\bibitem{Nesterenko}
V.~Nesterenko.
\newblock Dynamics of heterogeneous materials.
\newblock 2001.

\bibitem{Sch0}
I.~Rodnianski, W.~Schlag, and A.~Soffer.
\newblock Dispersive analysis of charge transfer models.
\newblock {\em Communications on pure and applied mathematics}, 58(2):149--216,
  2005.

\bibitem{MR1294558}
P.~Rosenau.
\newblock Nonlinear dispersion and compact structures.
\newblock {\em Phys. Rev. Lett.}, 73(13):1737--1741, 1994.

\bibitem{HR}
P.~Rosenau and J.~M. Hyman.
\newblock Compactons: solitons with finite wavelength.
\newblock {\em Physical Review Letters}, 70(5):564, 1993.

\bibitem{schlag2007dispersive}
W.~Schlag.
\newblock Dispersive estimates for schr{\"o}dinger operators: a survey.
\newblock {\em Mathematical aspects of nonlinear dispersive equations},
  163:255--285, 2007.

\bibitem{Sch3}
W.~Schlag.
\newblock Dispersive estimates for schr{\"o}dinger operators: a survey.
\newblock {\em Mathematical aspects of nonlinear dispersive equations},
  163:255--285, 2007.

\bibitem{Sch4}
W.~Schlag.
\newblock Stable manifolds for an orbitally unstable nonlinear schr{\"o}dinger
  equation.
\newblock {\em Annals of mathematics}, 169(1):139--227, 2009.

\bibitem{MR2285103}
G.~Simpson, M.~Spiegelman, and M.~I. Weinstein.
\newblock Degenerate dispersive equations arising in the study of magma
  dynamics.
\newblock {\em Nonlinearity}, 20(1):21--49, 2007.

\bibitem{MR1696311}
C.~Sulem and P.-L. Sulem.
\newblock {\em The nonlinear {S}chr\"odinger equation}, volume 139 of {\em
  Applied Mathematical Sciences}.
\newblock Springer-Verlag, New York, 1999.
\newblock Self-focusing and wave collapse.

\bibitem{MR2233925}
T.~Tao.
\newblock {\em Nonlinear dispersive equations}, volume 106 of {\em CBMS
  Regional Conference Series in Mathematics}.
\newblock Published for the Conference Board of the Mathematical Sciences,
  Washington, DC; by the American Mathematical Society, Providence, RI, 2006.
\newblock Local and global analysis.

\bibitem{MR2961944}
G.~Teschl.
\newblock {\em Ordinary differential equations and dynamical systems}, volume
  140 of {\em Graduate Studies in Mathematics}.
\newblock American Mathematical Society, Providence, RI, 2012.

\bibitem{MR3243083}
G.~Teschl.
\newblock {\em Mathematical methods in quantum mechanics}, volume 157 of {\em
  Graduate Studies in Mathematics}.
\newblock American Mathematical Society, Providence, RI, second edition, 2014.
\newblock With applications to Schr\"odinger operators.

\bibitem{weidmann2006spectral}
J.~Weidmann.
\newblock {\em Spectral theory of ordinary differential operators}, volume
  1258.
\newblock Springer, 2006.

\bibitem{weinstein1983nonlinear}
M.~I. Weinstein.
\newblock Nonlinear schr{\"o}dinger equations and sharp interpolation
  estimates.
\newblock {\em Communications in Mathematical Physics}, 87(4):567--576, 1983.

\bibitem{MR2468255}
L.~Zhang and L.-Q. Chen.
\newblock Envelope compacton and solitary pattern solutions of a generalized
  nonlinear {S}chr\"odinger equation.
\newblock {\em Nonlinear Anal.}, 70(1):492--496, 2009.

\end{thebibliography}
\bigskip

\end{document}